\newtheorem{theorem}{Theorem}
\newtheorem{lemma}[theorem]{Lemma}
\newtheorem{proposition}[theorem]{Proposition}
\newtheorem{corollary}[theorem]{Corollary}
\newtheorem{remark}[theorem]{Remark}
\theoremstyle{definition}
\newtheorem{definition}[theorem]{Definition}
\newcommand{\bd}{\partial}
\newcommand*{\ora}{\protect\overrightarrow}
\newcommand*{\ola}{\protect\overleftarrow}
\author{M. Eudave-Mu\~noz}
\address{ \hskip-\parindent
Mario  Eudave-Mu\~noz\\
Instituto de Matem\'aticas\\
 Universidad Nacional Aut\'onoma de M\'exico\\
  Campus Juriquilla\\ Quer\'etaro, Qro.\\ MEXICO}
 \email{mario@matem.unam.mx}
\author{F. Manjarrez-Guti\'errez}
\address{ \hskip-\parindent
Fabiola Manjarrez-Guti\'errez \\
 Instituto de Matem\'aticas\\
 Universidad Nacional Aut\'onoma de M\'exico\\
 Cuernavaca, Mor.\\
 MEXICO}
\email{fabiola.manjarrez@im.unam.mx}
\author{E. Ram\'{i}rez-Losada}
\address{ \hskip-\parindent
Enrique Ram\'{i}rez-Losada\\
 Centro de Investigaci\'on en Matem\'aticas\\
 Guanajuato, GTO. \\
 MEXICO}
\email{kikis@cimat.mx}
\author{J. Rodr\'{i}guez-Viorato}
\address{ \hskip-\parindent
J. Rodr\'{i}guez-Viorato\\
 Centro de Investigaci\'on en Matem\'aticas\\
 Guanajuato, GTO.\\
 MEXICO}
\email{jesusr@cimat.mx}
\subjclass{57M25}
\keywords{rational 2-bridge links, genus, spanning surfaces, satellite tunnel number one knots, torti-rational knots.}
\title{Computing genera of satellite tunnel number one knots and torti-rational knots}
\begin{document}
\begin{abstract}
The genus of satellite tunnel number one knots and torti-rational knots is computed using the tools introduced by Floyd and Hatcher \cite{FH}. An implementation of an algorithm is given to compute genus and slopes of minimal genus Seifert surfaces for such knots.
\end{abstract}

\maketitle

\section{Introduction}
A family of knots widely studied is the one known as  $(1,1)$-knots, these are knots which can be put in 1-bridge position with respect a standard torus in $S^3$. This family contains all 2-bridge knots, all satellite tunnel number one knots, and it is contained  in the family of tunnel number one knots. Genus one and genus two $(1,1)$-knots have been classified in \cite{Ma} and \cite{EMR}, respectively. It is natural to ask for a  classification of $(1,1)$-knots of any genus $g$. Such knots are divided  into the satellite and the non-satellite cases. For the non-satellite case we expect to have a description similar to that in \cite{EMR}, as special banding of two $(1,1)$-knots of smaller genus. In the case that the knot is satellite, we need to determine the 4-tuple $\alpha, \beta, p, q$ of the Morimoto-Sakuma construction that produces satellite genus $g$ tunnel number one knots \cite{MS}. The parameters $\alpha, \beta$ describe a rational link $L_{\beta/\alpha}$ and $p,q$ a companion torus knot. For genus $g\geq 3$ a minimal Seifert surface may intersect the companion torus in a non-empty collection of longitudes, hence the surface is broken into two pieces, one piece consists of Seifert surfaces for  the companion torus, the other piece is a surface contained in the neighborhood of the torus knot with one boundary parallel to the satellite knot and boundary components which are slopes on the companion torus. Such a surface defines an essential surface for the link $L_{\beta/\alpha}$, with one boundary parallel to a component of the link and a number of boundary components on the other component.
Floyd and Hatcher \cite{FH} classified essential surfaces for rational links. Later Hoste and Shanahan \cite{HS} classified the slopes of such surfaces. However the calculation of genera of the surfaces is not given there.

We were able to determine that an essential surface $F'$ for a rational link with one boundary on one component of the link and a number of boundary slopes on the other component of the link, arises from at most two minimal edge-paths of the Floyd-Hatcher construction, by means of continued fraction expansions for $\beta/\alpha$. This gives a constructive description of the surfaces and allows to compute genus and slope of the surface, as well as to determine whether or not the surface is a fiber of a fibering over the circle for the link.

Applying these results to satellite tunnel number one knots we obtain the following result:

\begin{theorem}
\label{genus-satellite}
Let $L_{\beta/\alpha}=K_1 \cup K_2$ be the 2-bridge link given by the tunnel number one satellite knot $K(\alpha,\beta, p, q)$. Suppose $lk(K_1, K_2)\neq 0$. Then
\begin{enumerate}
\item If $0\leq \beta \leq \alpha$, $pq\geq 0$ and $[0;2n_1,...,2n_j]$ is the unique continued fraction for $\beta/\alpha$ with $j$ odd, the genus of $F'$ is:
\begin{equation*} 
\frac{1}{2} \Bigg[\displaystyle{\left(-1 + \sum_{ k \; odd}|n_k|\right) } \; (|lk(K_1,K_2)pq| - 1) + (j + 1) - (\vert lk(K_1, K_2)\vert+1)\Bigg]
\end{equation*}
where $k\in \{1, ..., j\}$
\item If $0\leq \beta \leq \alpha$, $pq\leq 0$ and $[1;2m_1,...,2m_i]$ is the unique continued fraction for $\beta/\alpha$ with $i$ odd, the genus of $F'$ is:
\begin{equation*} 
\frac{1}{2} \Bigg[\displaystyle{\left(-1 + \sum_{ h \; odd}|m_h|\right) } \; (|lk(K_1,K_2)pq| - 1) + (i + 1) - (\vert lk(K_1, K_2)\vert+1)\Bigg]
\end{equation*}
where $h\in \{1, ..., i\}$
\end{enumerate}
\end{theorem}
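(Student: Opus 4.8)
The plan is to realize $F'$ explicitly through the Floyd--Hatcher edge-path machinery and then read off its Euler characteristic and boundary data directly from the continued fraction. First I would invoke the structural result quoted above: an essential surface for $L_{\beta/\alpha}$ with one boundary on $K_1$ and several boundaries on $K_2$ is carried by a minimal edge-path in the Floyd--Hatcher diagram, and such an edge-path is recorded by a continued fraction expansion of $\beta/\alpha$. Under the hypotheses of part (1), namely $0\le\beta\le\alpha$ and $pq\ge 0$, I would argue that the relevant edge-path is exactly the one coming from the all-even expansion $[0;2n_1,\dots,2n_j]$ with $j$ odd; the parity condition is what guarantees this normal form exists, is unique, and yields an \emph{orientable} surface (so that genus is well defined), while the sign of $pq$ is what selects it over the companion expansion $[1;2m_1,\dots,2m_i]$ that governs part (2).

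The core computation is the Euler characteristic of $F'$. I would build $F'$ from the standard Floyd--Hatcher pieces indexed by the edges of the path, tracking two quantities: the number of parallel sheets and the number of saddles. The sheet count is where the satellite data enters: because $F'$ sits in the neighbourhood of the $(p,q)$ torus knot and its base surface for the link is effectively cabled, the number of sheets is governed by $|lk(K_1,K_2)\,pq|$. I would then show that the even-indexed entries contribute only product (annular) pieces, adding boundary but no genus, whereas each odd-indexed entry contributes nontrivial topology whose total is measured by $\sum_{k\ \mathrm{odd}}|n_k|$, each unit scaled by the sheet count. Assembling these contributions should yield $\chi(F')=1-j-\bigl(\sum_{k\ \mathrm{odd}}|n_k|-1\bigr)\bigl(|lk(K_1,K_2)pq|-1\bigr)$.

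Finally I would count boundary components --- one parallel to $K_1$ and $|lk(K_1,K_2)|$ lying on $K_2$, so that $b=|lk(K_1,K_2)|+1$ --- and convert to genus via $g(F')=\frac12\bigl(2-b-\chi(F')\bigr)$, which after substitution reproduces the stated formula. Indeed the bracketed term $(j+1)-(|lk(K_1,K_2)|+1)$ is exactly the bookkeeping of the edge-path length against the boundary count $b$, and the remaining term matches the contribution of $\chi(F')$. Part (2) then follows by the identical argument applied to the $[1;2m_1,\dots,2m_i]$ expansion, the change of sign of $pq$ amounting to passing to the reflected edge-path.

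I expect the main obstacle to be the second step: pinning down the exact number of sheets and saddles contributed by each piece so that the factor $|lk(K_1,K_2)pq|-1$ and the correction inside $\sum|n_k|-1$ emerge correctly, since this demands understanding precisely how the $(p,q)$-cabling multiplies the base surface and how adjacent Floyd--Hatcher pieces are glued without double-counting shared saddles. A secondary difficulty is certifying orientability of $F'$ and confirming that the all-even expansion with $j$ odd is genuinely the unique normal form singled out by the sign hypotheses, rather than its companion.
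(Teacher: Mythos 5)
Your proposal follows essentially the same route as the paper: cut the minimal Seifert surface along the companion torus to obtain $F'$, identify $F'$ with the Floyd--Hatcher surface carried by the $AD$-edge-path of the all-even expansion, count saddles to get $\chi(F')$, and convert to genus. The two obstacles you flag are exactly what the paper supplies: the sign of $pq$ selects the expansion via an explicit boundary-slope computation (the slope of $F'$ is $-1/pq$ on $K_1$ and $-lk(K_1,K_2)^2pq$ on $K_2$, and the $[0;\dots]$ and $[1;\dots]$ paths are shown to realize negative and positive slopes respectively), while the path is forced to be of $AD$-type because $\mu=|lk(K_1,K_2)\,pq|$ differs from both $1$ and $n=|lk(K_1,K_2)|$, ruling out $A$- and $AB$-paths. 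One slip to correct: $F'$ has its single boundary component on $K_2$ and its $|lk(K_1,K_2)|$ components on $K_1$ (you have these reversed), though this does not affect the total boundary count entering the genus formula.
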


\begin{corollary}
Let $K=K(\alpha,\beta, p, q)$ be a tunnel number one satellite knot such that $lk(K_1, K_2)\neq 0$. Then the genus of $K$ is:
\begin{equation*}
g(K)= g(F')+ \vert lk(K_1, K_2)\vert \frac{(\vert p\vert-1)(\vert q \vert-1)}{2}
\end{equation*}

\end{corollary}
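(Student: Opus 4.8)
The plan is to realize a minimal genus Seifert surface for $K$ as the union of $F'$ with copies of the fiber surface of the companion $(p,q)$-torus knot, and then to extract $g(K)$ from a single Euler characteristic computation, using Theorem \ref{genus-satellite} to supply $g(F')$. First I would fix the companion solid torus $V$, a tubular neighborhood of the $(p,q)$-torus knot $C$, so that $K$ lies inside $V$ as the pattern and $T=\partial V$ is the companion torus. Since the winding number of the pattern equals $lk(K_1,K_2)$, which is assumed nonzero, $K$ is homologically nontrivial in $V$, so no Seifert surface for $K$ can be isotoped into $V$ and every such surface must meet $T$. Taking $S$ to be a minimal genus Seifert surface, I would isotope it so that $S\cap T$ is a collection of essential curves; incompressibility and minimality force them to be parallel and coherently oriented, and because they bound in the exterior $S^3\setminus\mathrm{int}\,V$ they are longitudes $\lambda_C$ of $C$.

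Next I would count these curves homologically. Writing $[S\cap T]=n[\lambda_C]\in H_1(T)$ and pushing into $H_1(V)$, the class maps to $n[\text{core}]$, which must equal $[K]=lk(K_1,K_2)\,[\text{core}]$; hence $n=|lk(K_1,K_2)|$. I would then identify the two pieces of the cut-open surface: the piece inside $V$ is $F'$, the essential surface of Theorem \ref{genus-satellite}, with one boundary parallel to $K$ and $n$ boundary longitudes on $T$; the piece $S_{\mathrm{out}}$ outside $V$ lies in the torus knot exterior, which is fibered and atoroidal, so by incompressibility and minimality it is $n$ parallel copies of the fiber, each of genus $g_C=\frac{(|p|-1)(|q|-1)}{2}$. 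Note that the slope matching is automatic: $S$ being a Seifert surface forces $S\cap T$ to bound outside, hence to be the Seifert longitude, which is exactly the fiber boundary.

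Finally I would run the additivity computation. Cutting $S$ along the $n$ circles gives $\chi(S)=\chi(F')+\chi(S_{\mathrm{out}})$, since circles contribute zero. Writing $\chi(F')=1-2g(F')-n$ (one boundary on $K$, $n$ on $T$) and $\chi(S_{\mathrm{out}})=n(1-2g_C)$, and comparing with $\chi(S)=1-2g(K)$, the $n$-terms cancel and yield $g(K)=g(F')+n\,g_C=g(F')+|lk(K_1,K_2)|\,\frac{(|p|-1)(|q|-1)}{2}$, which is the claimed formula.

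The main obstacle is the minimality bookkeeping rather than the arithmetic. I must argue both that the displayed decomposition actually realizes the minimal genus — that no competing surface meets $T$ more economically and that the two minimizers glue along the common slope $\lambda_C$ — and, conversely, that an arbitrary minimal genus $S$ splits into pieces that are each minimal for their side, so that the summands really are the $g(F')$ of Theorem \ref{genus-satellite} and the fiber genus $g_C$. This rests on the companion torus being essential in the exterior of $K$ and on the incompressibility of both cut pieces, which is where the hypothesis $lk(K_1,K_2)\neq 0$ does its real work by guaranteeing that $T$ is unavoidable.
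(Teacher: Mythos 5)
Your proposal is correct and follows essentially the same route as the paper: both decompose a minimal genus Seifert surface for $K$ along the companion torus into the inner piece giving $F'$ and $\vert lk(K_1,K_2)\vert$ Seifert surfaces for the $(p,q)$-torus knot, then add genera via Euler characteristic. The only difference is that the paper outsources the decomposition (the count of longitudes on $\partial N(K_0)$ and the identification of the outer pieces as Seifert surfaces for $K_0$) to Lemma \ref{BZ}, quoted from Burde--Zieschang, whereas you re-derive that content by hand.
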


It is worth mentioning that Hirasawa and Murasugi \cite{HM} obtained similar results using the Alexander polynomial.

We can also apply our technique to compute the genus of torti-rational knots, which are obtained from a rational link by performing $r$-Dehn twists along one component of the link.

\begin{theorem}
\label{genus-torti}
Let $K(\beta/\alpha; r)$ be a torti-rational knot and $F$ a minimal genus Seifert surface for it. Suppose that $lk(K_1, K_2)\neq 0$. Then:
\begin{enumerate}
\item If $r > 1$ and $[1;2m_1,...,2m_i]$  is the unique continued fraction for $\beta/\alpha$ with $i$ odd, the genus of $F$ is:
\begin{equation*} 
\frac{1}{2} \Bigg[\displaystyle{\left(-1 + \sum_{ h \; odd}|m_h|\right) } \; (|lk(K_1,K_2)r| - 1) + (i + 1) - (\vert lk(K_1, K_2)\vert+1)\Bigg]
\end{equation*}
where $h\in \{1, ..., i\}$

\item If $r < 1$ and $[0;2n_1,...,2n_j]$ is the unique continued fraction for $\beta/\alpha$ with $j$ odd, the genus of $F$ is:
\begin{equation*} 
\frac{1}{2} \Bigg[\displaystyle{\left(-1 + \sum_{ k \; odd}|n_k|\right) } \; (|lk(K_1,K_2)r| - 1) + (j + 1) - (\vert lk(K_1, K_2)\vert+1)\Bigg]
\end{equation*}
where $k\in \{1, ..., j\}$

\item If $\vert r \vert  > 1 $ and $\vert lk(K_1,K_2)\vert > 1$. Let  
$[s;2r_1,...,2r_k]$  be the continued fraction expansion for $\beta/\alpha$ with $s=0$ or $1$ such that  $k\geq 3$ and  $\vert r_t \vert \geq 2$ for all $t$. The genus of $F$ is:
\begin{equation*}
    1+ \frac{(\vert lk(K_1, K_2)\vert+1)(k-3)}{4}
\end{equation*}

\item If $\vert r \vert =1$ and $\vert lk(K_1,K_2)\vert = 1$ and $[0;2n_1,...,2n_j]$ and $[1;2m_1,...,2m_i]$ are the continued fraction for $\beta/\alpha$ with $j,i$ odd. The genus of $F$ is:
\begin{equation*}
    min \Bigg(\frac{i-1}{4}, \frac{j-1}{4}\Bigg)
\end{equation*}

\end{enumerate}
\end{theorem}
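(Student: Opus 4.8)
The plan is to reduce the computation to the satellite case of Theorem~\ref{genus-satellite} by regarding a torti-rational knot $K(\beta/\alpha;r)$ as a degenerate Morimoto--Sakuma satellite in which the companion torus knot $T(p,q)$ is replaced by an \emph{unknotted} circle carrying $r$ Dehn twists. First I would set up the analogue of the decomposition described in the introduction: a Seifert surface $F$ for $K(\beta/\alpha;r)$ meeting the twisting torus $T$ in essential curves splits into a piece lying inside the solid torus bounded by $T$ and an essential surface $F'$ for the rational link $L_{\beta/\alpha}=K_1\cup K_2$, with one boundary parallel to $K_1$ and the remaining boundary components realizing slopes on $K_2$. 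Because the twisting is performed along an unknot, the inner piece is a union of disks and annuli and contributes nothing to the genus, so $g(F)=g(F')$. In this dictionary the integer $r$ occupies exactly the slot held by $pq$ in the satellite construction, since the framing that $F'$ must match along $T$ is now prescribed by the twisting number $r$ rather than by the cabling slope $pq$ of a torus knot.

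With this correspondence, parts (1) and (2) follow from Theorem~\ref{genus-satellite} under the substitution $pq\mapsto r$, once the sign dictionary between the direction of the twist and the choice of even continued fraction is pinned down. I would check that a positive twist ($r>1$) is matched by the expansion $[1;2m_1,\dots,2m_i]$ and a negative twist ($r<1$) by $[0;2n_1,\dots,2n_j]$; the reversal relative to the satellite statement is precisely the sign change produced by realizing an $r$-fold twist as surgery on an encircling unknot, where a $+r$ twist induces slope $-r$. Replacing $|lk(K_1,K_2)\,pq|$ by $|lk(K_1,K_2)\,r|$ in the Floyd--Hatcher genus formula then reproduces the stated expressions verbatim.

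For parts (3) and (4) the surface constructed above need not minimize genus, so the argument becomes a comparison among the essential surfaces classified by Floyd and Hatcher~\cite{FH}. Using the fact quoted in the introduction---that $F'$ arises from at most two minimal edge-paths read off from continued fraction expansions of $\beta/\alpha$---each admissible edge-path yields one candidate, and the minimal genus is the smallest among their Euler-characteristic contributions. When $|r|>1$ and $|lk(K_1,K_2)|>1$ (part 3) the winning candidate is a Seifert surface \emph{disjoint} from $T$: being insensitive to the twisting, its genus is independent of $r$ and is controlled only by the length $k$ of the reduced expansion $[s;2r_1,\dots,2r_k]$ (all $|r_t|\ge 2$, $k\ge 3$) together with $|lk(K_1,K_2)|$, giving $1+\tfrac{(|lk(K_1,K_2)|+1)(k-3)}{4}$. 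Since the crossing surface of parts (1)--(2) has genus growing linearly in $|r|$ whereas this one does not, the disjoint surface takes over exactly in the stated range. When $|r|=|lk(K_1,K_2)|=1$ (part 4) the crossing surface no longer grows with $r$ and both expansions $[0;\dots]$ and $[1;\dots]$ produce genuine competitors, of genera $\tfrac{j-1}{4}$ and $\tfrac{i-1}{4}$; minimality then selects $\min\bigl(\tfrac{i-1}{4},\tfrac{j-1}{4}\bigr)$.

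The main obstacle will be the minimality step underlying (3) and (4): certifying that no other essential surface, and in particular no orientable surface obtained after repairing the non-orientability of a Floyd--Hatcher surface, yields a smaller genus, and that in part (4) the two competing surfaces are simultaneously realizable so that taking a minimum is legitimate. This is where a genuine lower bound must enter---either a Thurston-norm or sutured-manifold estimate for the complement of $K(\beta/\alpha;r)$, or the Alexander-polynomial degree bound in the spirit of Hirasawa and Murasugi~\cite{HM}---to guarantee that the candidate produced by the construction is genus-minimizing rather than merely the smallest surface the edge-path bookkeeping happens to deliver.
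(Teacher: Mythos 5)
Your treatment of parts (1)--(2) is essentially the paper's argument routed through Theorem~\ref{genus-satellite}: the paper undoes the $r$ twists to produce an essential surface $F'$ for $L_{\beta/\alpha}$ with one boundary component parallel to $K_2$ (not $K_1$ --- you have the roles of the two components reversed) and $n$ boundary components of slope $1/r$ on $K_1$, so that $\rho=1$, $n=|lk(K_1,K_2)|$ by Lemma~\ref{lemma:signsum-and-linking}, and $\mu=|r\,lk(K_1,K_2)|$; the sign of that slope selects the $[1;2m_1,\dots,2m_i]$ or $[0;2n_1,\dots,2n_j]$ expansion via Corollaries~\ref{linkingpositivo} and~\ref{linkingnegativo}, and Proposition~\ref{genusAD} yields the formula. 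Your shortcut of substituting $pq\mapsto r$ (with the sign flip) into Theorem~\ref{genus-satellite} is not literally licensed, since an arbitrary twisting number $r$ need not factor as $pq$ with $|p|,|q|\ge 2$; one really has to re-run the edge-path computation with the new $\mu$ and boundary slope, which is what the paper does. Also, the external lower bound you worry about (Thurston norm, Alexander polynomial) is not needed: Floyd--Hatcher's classification already guarantees that every essential surface --- in particular the one obtained from a minimal genus Seifert surface by cutting along $\partial V$ and untwisting --- is carried by a branched surface of a minimal edge-path, and the boundary-slope and orientability constraints (Corollary~\ref{coro:AB-and-AD-edgepaths}) pin down which edge-paths can occur; that is the completeness argument the paper relies on.

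The genuine gap is in part (3). You propose that the minimizing surface there is one \emph{disjoint} from the twisting torus $T=\partial V$, ``insensitive to the twisting.'' No such Seifert surface can exist under the standing hypothesis $lk(K_1,K_2)\neq 0$: a spanning surface contained in $V=E(K_1)$ is disjoint from $K_1$ and would force $lk(K_1,K_2)=0$ (this is exactly the dichotomy the paper uses to separate the zero-linking case, where indeed $F\subset V$). Moreover a surface missing $K_1$ could not produce the factor $(|lk(K_1,K_2)|+1)$ in the stated formula; in the paper that factor arises because the part-(3) surface comes from an $AB$-edge-path whose $ABBA$ blocks each contribute $n+1$ saddles, $n=|lk(K_1,K_2)|$ being the number of \emph{longitudinal} boundary components on $\partial N(K_1)$ (Lemma~\ref{lemma:sillas}(1) and Proposition~\ref{genusAB}). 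Similarly, part (4) is not a competition between a crossing and a non-crossing surface: when $|r|=|lk(K_1,K_2)|=1$ one has $\mu=1$, the edge-path is an $A$-edge-path in $D_1$, and the only choice is between the two expansions $[0;2n_1,\dots,2n_j]$ and $[1;2m_1,\dots,2m_i]$, each giving a surface whose genus is computed by Proposition~\ref{genusA}; one then takes the smaller of the two. To repair your argument you would need to replace the ``disjoint surface'' candidate by the $AB$-edge-path surface and justify, via Lemma~\ref{lemma:sillas} and Corollary~\ref{coro:AB-and-AD-edgepaths}, when an $AB$- rather than an $AD$-edge-path is forced.
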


For the case that $lk(K_1, K_2)=0$ for both cases we prove:

\begin{theorem}
When the linking number is zero, the genus of a satellite tunnel number one knot $K(\alpha, \beta, p, q)$ is one half the wrapping number of $K_2$ in $E(K_1)$. Moreover, if 
$[s;2r_1,...,2r_k]$ is the continued fraction expansion for $\beta/\alpha$ with $s=0$ or $1$ such that  $k$ odd, the genus of $K(\alpha, \beta, p, q)$ is $\Sigma \vert r_i\vert$. The same is true for a torti-rational knot.
\end{theorem}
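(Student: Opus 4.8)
The plan is to convert the satellite genus into a genus measured inside the companion solid torus and then to read both the genus and the wrapping number off the continued fraction, in the same spirit as Theorems~\ref{genus-satellite} and~\ref{genus-torti}; the feature distinguishing the present case is that $lk(K_1,K_2)=0$ forces the relevant surface to have meridional boundary on $K_1$. First I would identify the linking number with a winding number. Since $K_1$ is a component of a $2$-bridge link it is unknotted, so $V=E(K_1)$ is a solid torus whose meridian disk is a disk bounded by $K_1$; thus $K_2$ meets this disk algebraically $lk(K_1,K_2)$ times, and $lk(K_1,K_2)=0$ says precisely that $K_2$ is null-homologous in $V$ and bounds a surface $S\subset \mathrm{int}\,V$. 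The satellite $K(\alpha,\beta,p,q)$ is the image of $K_2$ when $V$ is re-embedded in $S^3$ as $N(T(p,q))$, and the torti-rational knot $K(\beta/\alpha;r)$ is the image of $K_2$ under the self-homeomorphism of $V$ given by the $r$-fold twist along a meridian disk; in either case the image of $S$ is a Seifert surface for the resulting knot, so its genus is bounded above by the minimal genus $g_V(K_2)$ of a surface bounded by $K_2$ inside $V$.

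Next I would prove the matching lower bound, i.e.\ that a minimal genus Seifert surface can be pushed into $V$. For the satellite this is where the companion does its work: take $F$ incompressible and in minimal position with respect to the companion torus $T=\partial V$, which is incompressible in $E(K)$ because $T(p,q)$ is nontrivial; then $F\cap T$ consists of parallel essential curves, and a standard homological analysis in the torus-knot exterior $W=S^3\setminus\mathrm{int}\,V$, together with the winding-number-zero hypothesis, shows that these curves may be taken to be longitudes of $V$ that cancel in oppositely-oriented pairs, so a minimality/exchange argument removes them and leaves $F\subset V$. Hence $g(K)=g_V(K_2)$. For the torti-rational knot the companion torus is no longer incompressible, so instead I would use that the surface realizing $g_V(K_2)$ can be arranged to meet $\partial V$ only in meridians of $K_1$; the $r$-twist along $K_1$ then fixes it setwise and does not change its genus, so the genus is independent of $r$ and again equals $g_V(K_2)$.

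Finally I would compute $g_V(K_2)$ and the wrapping number from the continued fraction. By the constructive description of essential surfaces via continued fraction expansions recalled before the theorems, the surface $S$ realizing $g_V(K_2)$ is the one associated with $[s;2r_1,\dots,2r_k]$, the parity $k$ odd being exactly what makes $\partial S$ a single longitude on $K_2$; carrying out the Euler-characteristic bookkeeping of that description at the meridional slope on $K_1$ yields $g_V(K_2)=\sum_i|r_i|$. Reading the same $2$-bridge band diagram, $K_2$ passes through the disk bounded by $K_1$ exactly $2\sum_i|r_i|$ times and this number is minimal, so $\mathrm{wrap}(K_2,E(K_1))=2\sum_i|r_i|$ and therefore $g(K)=\tfrac12\,\mathrm{wrap}(K_2,E(K_1))=\sum_i|r_i|$, with the identical conclusion for the torti-rational knot.

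The step I expect to be the genuine obstacle is the push-in: controlling the longitudinal curves $F\cap T$ and showing that minimality of genus lets them be cancelled in pairs without raising the genus, together with the parallel argument that the torti-rational twist fixes a minimal surface meeting $\partial V$ meridionally. Once the minimal surface is known to live in $V$, both the genus count $\sum_i|r_i|$ and the wrapping-number count $2\sum_i|r_i|$ are direct diagrammatic consequences of the continued-fraction description already established.
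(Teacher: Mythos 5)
Your overall skeleton --- push a minimal genus Seifert surface into the solid torus $V=E(K_1)$, then read the genus and the wrapping number off the continued fraction --- matches the paper's, and for the satellite case the push-in is exactly what the paper gets for free from Lemma~\ref{BZ} (with $lk=0$ the surface meets $\partial N(K_0)$ in zero longitudes, hence lies in $N(K_0)$). But there are two genuine gaps. First, your torti-rational push-in does not work as stated: the surface ``realizing $g_V(K_2)$'' already lies in $V$ and is only relevant to the upper bound, whereas what must be proved is that a minimal genus Seifert surface for the \emph{twisted} knot $K(\beta/\alpha;r)$ cannot meet $\partial V$ essentially. Saying that the $r$-twist fixes a surface meeting $\partial V$ in meridians of $K_1$ does not exclude a smaller-genus surface that runs through the complementary solid torus. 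The paper rules this out with the orientation-sum invariant: if $F$ met $\partial V$ in $n$ longitudes, undoing the twists would yield an essential spanning surface for the link with boundary slope $1/r$ on $K_1$, so Lemma~\ref{lemma:signsum-and-linking} forces $|\Sigma_v|=0$, contradicting Lemma~\ref{lemma:sillas} (which forces $|\Sigma_v|=1$ or $\mu$); hence $\mu=0$ and $F\subset V$.

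Second, the heart of the statement --- $g=\tfrac12\,\mathrm{wrap}(K_2,E(K_1))$ --- is not proved in your proposal: you split it into two separate counts, $g_V(K_2)=\sum_i|r_i|$ and $\mathrm{wrap}=2\sum_i|r_i|$, and the minimality of the latter (``this number is minimal'') is precisely the content of the theorem, asserted rather than argued. The paper's mechanism is different and is where the real work happens: an induction on the pair $(g(F'),|Y(F')|)$ via meridional compressions (Lemma~\ref{meri-comp}, resting on Lemma~\ref{KT} and Lemma~21 of \cite{EMR}). One shows a minimal genus $F'$ cannot be meridionally incompressible, each meridional compression drops the genus by one and $|Y|$ by two, and after $g$ steps one reaches a disk bounded by $K_2$ meeting $K_1$ in exactly $2g$ points; conversely a disk with fewer punctures would tube up to a smaller-genus Seifert surface. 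This simultaneously pins down the wrapping number as $2g$ and, via the $g$ tubes (equivalently the $D$-edge-path in $D_0$ of Remark~\ref{Dcero}, with $\sum_i|r_i|$ edges), yields $g=\sum_i|r_i|$. Without an argument of this kind your two diagrammatic counts remain upper bounds only.
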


Theorems \ref{genus-satellite} and \ref{genus-torti} required a decomposition of $\beta/\alpha$ as a continued fraction and some computations. We have written an algorithm that receives as inputs $\alpha, \beta, p, q, r$ and outputs genus, slopes and number of boundary components for the surface, at some cases it can determined the fiberedness of the knot.

Our algorithm is based on that given by Hoste and Shanahan. We found a fault for rationals $\beta/\alpha > 1/2$, thus it was necessary to reprogram this algorithm to compute the paths and to incorporate computations of genus, slopes and number of boundary components.  Our modification of their algorithm can be found at \href{https://github.com/viorato/compute\_rational\_links\_genus}{https://github.com/viorato/compute\_rational\_links\_genus}.

In Section \ref{preliminaries} we review the concepts from Floyd-Hatcher which are necessary to develop our techniques. In Section \ref{general results} we state the basic results that allow to describe the specific type of edge-paths associated to the surfaces of our interest. Using continued fraction expansions for $\beta/\alpha$ we compute genus and slopes for the surfaces in Section \ref{fractions and genus}. We revisite Floyd-Hatcher to give their criteria for a surface to be a fiber of a fibering for a rational link and give a criteria in terms of the continued fraction expansions for our surfaces to be fibers in Section \ref{fiberings}. Finally in Section \ref{applications} we compute the genus for satellite tunnel one knots and for torti-rational knots.

\section{Preliminaries}
\label{preliminaries}

\subsection{The diagram of slope system in the four puncture sphere}
A 2-bridge link $L_{\beta/\alpha}$ is represented by a rational number $\beta/\alpha$. We may suppose $0 <  \beta  < \alpha$, $\alpha$ even, and $gcd(\alpha,\beta)=1$. 
 We say that a surface in $S^3-L_{\beta/\alpha}$ is essential if it is incompressible, $\bd$-incompressible and not boundary parallel.
The main idea of Floyd and Hatcher's \cite{FH}
construction is to associate to an essential surface in $S^3-L_{\beta/\alpha}$ an edge-path from $1/0$ to $\beta/\alpha$ in the Diagram $D_t$,  $t\in [0,\infty]$, shown in Figure \ref{fig:AllDt}.

The diagram $D_1$ is an embedded graph on the upper half plane $\mathbb{H}$ with the real line $\mathbb{R}$ and the point at infinite $1/0$. Its vertices are the rational points in $\mathbb{R}\cup \{1/0\}$, and its edges are hyperbolic lines in the upper half model of $\mathbb{H}$ joining two vertices $a/c$, $b/d$, $(a,b,c,d \in \mathbb{Z})$ if and only if $ad-bc=\pm 1$. These lines  are the edges of ideal triangles in $\mathbb{H}$, and $PSL_2(\mathbb{Z})$ is the group of orientation-preserving symmetries of this ideal triangulation. The diagram $D_1$ is transformed onto the Poincar\'e disk model by $-\frac{z-\frac{1+i}{2}}{z-\frac{1-i}{2}}$, see Fig.\ref{fig:AllDt}. Let $G \subset PSL_2(\mathbb{Z})$ be the subgroup of M\"obius transformations $(az+b)/(cz+d)$ with $c$ even. Its fundamental domain is the triangle $\langle 1/0,0/1,1/1 \rangle$. Consider the ideal quadrilateral $Q = \langle 1/0, 0/1, 1/2, 1/1 \rangle$. The $G$-images of this quadrilateral tessellate $\mathbb{H}$. We form the diagram $D_0$ from $D_1$ by deleting the $G$-orbit of the diagonal $\langle 0/1,1/1 \rangle$ of $\langle 1/0, 0/1, 1/2, 1/1 \rangle$ and adding the $G$-orbit of the opposite diagonal $\langle 1/0,1/2 \rangle$. The diagram $D_t$, $0<t<\infty$, $t\not= 1$, is obtained from $D_1$ by deleting the diagonal $\langle 0/1,1/1 \rangle$ in each quadrilateral $Q$, and adding a small rectangle having a vertex in the interior of each edge of $Q$ so that $g(D_t)=D_t$ for $g\in G$. The edges of $D_t$ fall into four $G$-orbits, labelled $A$, $B$, $C$, $D$.
\begin{figure}
    \centering
    \begin{subfigure}[b]{0.4\textwidth}
        \includegraphics[width=\textwidth]{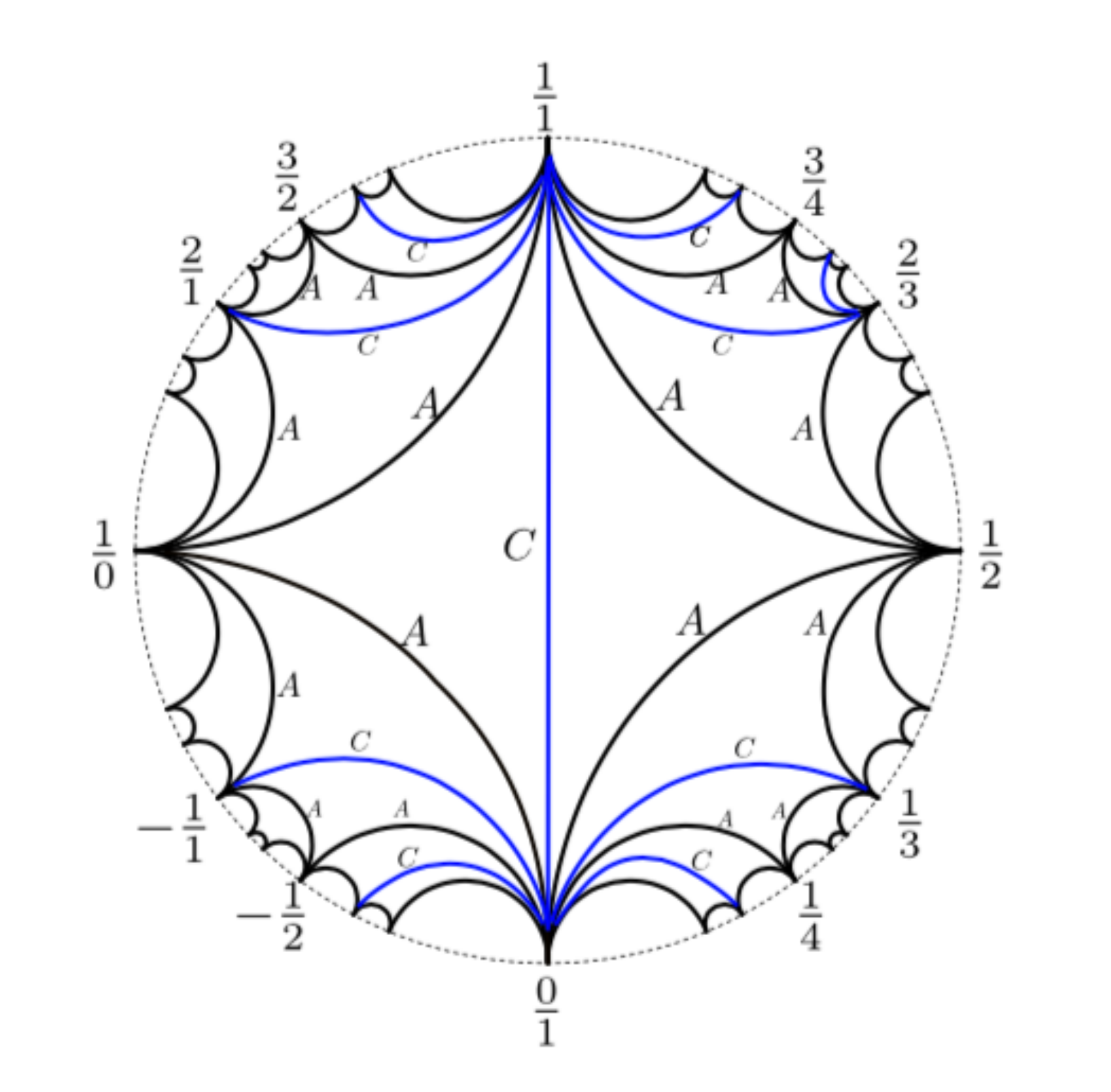}
        \caption{Diagram $D_1$}
        \label{fig:D1}
    \end{subfigure}
    \vfill
        \begin{subfigure}[b]{0.4\textwidth}
        \includegraphics[width=\textwidth]{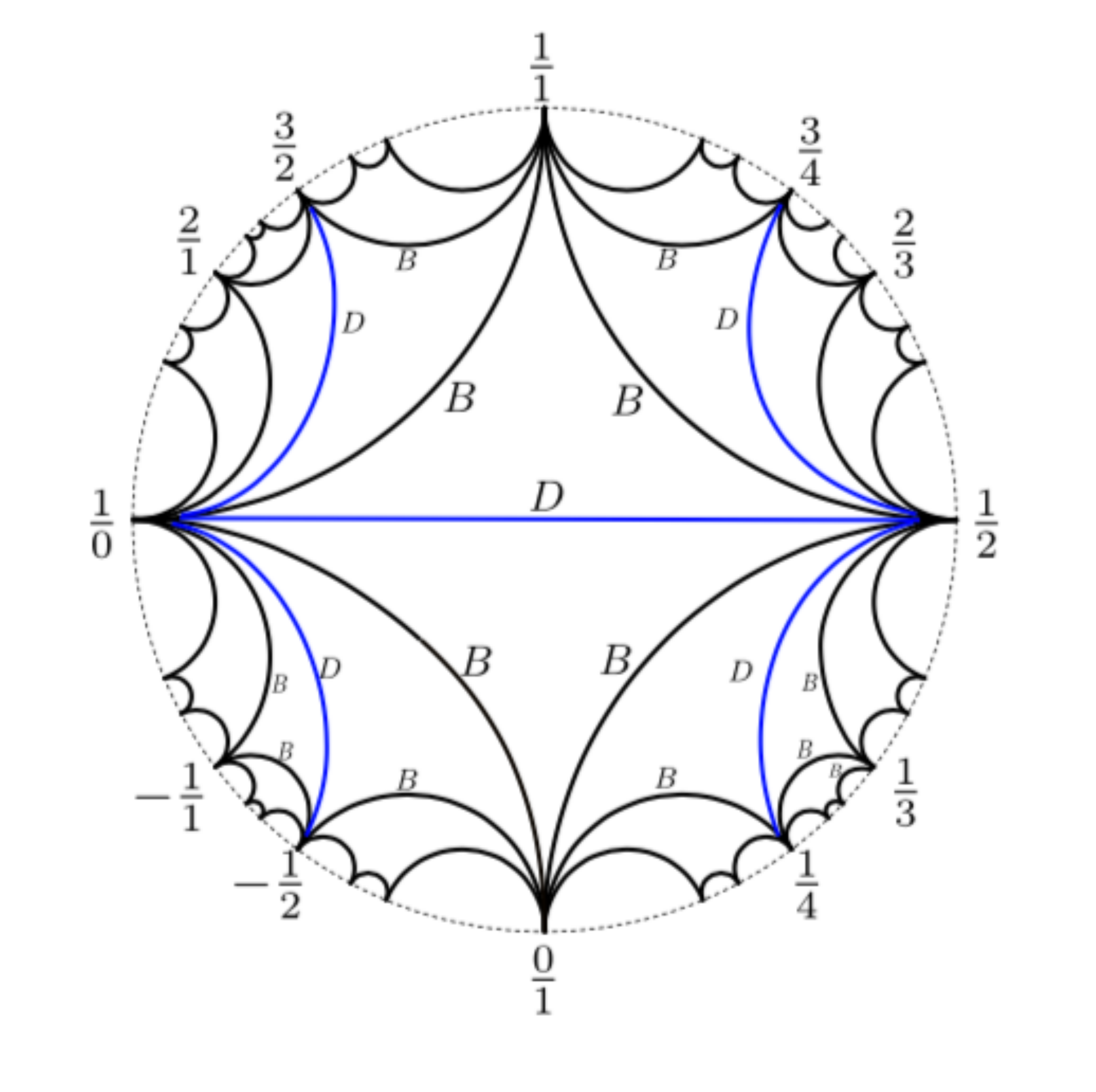}
        \caption{Diagram $D_0=D_\infty$}
        \label{fig:D0}
    \end{subfigure}
    \vfill
     \begin{subfigure}[b]{0.4\textwidth}
        \includegraphics[width=\textwidth]{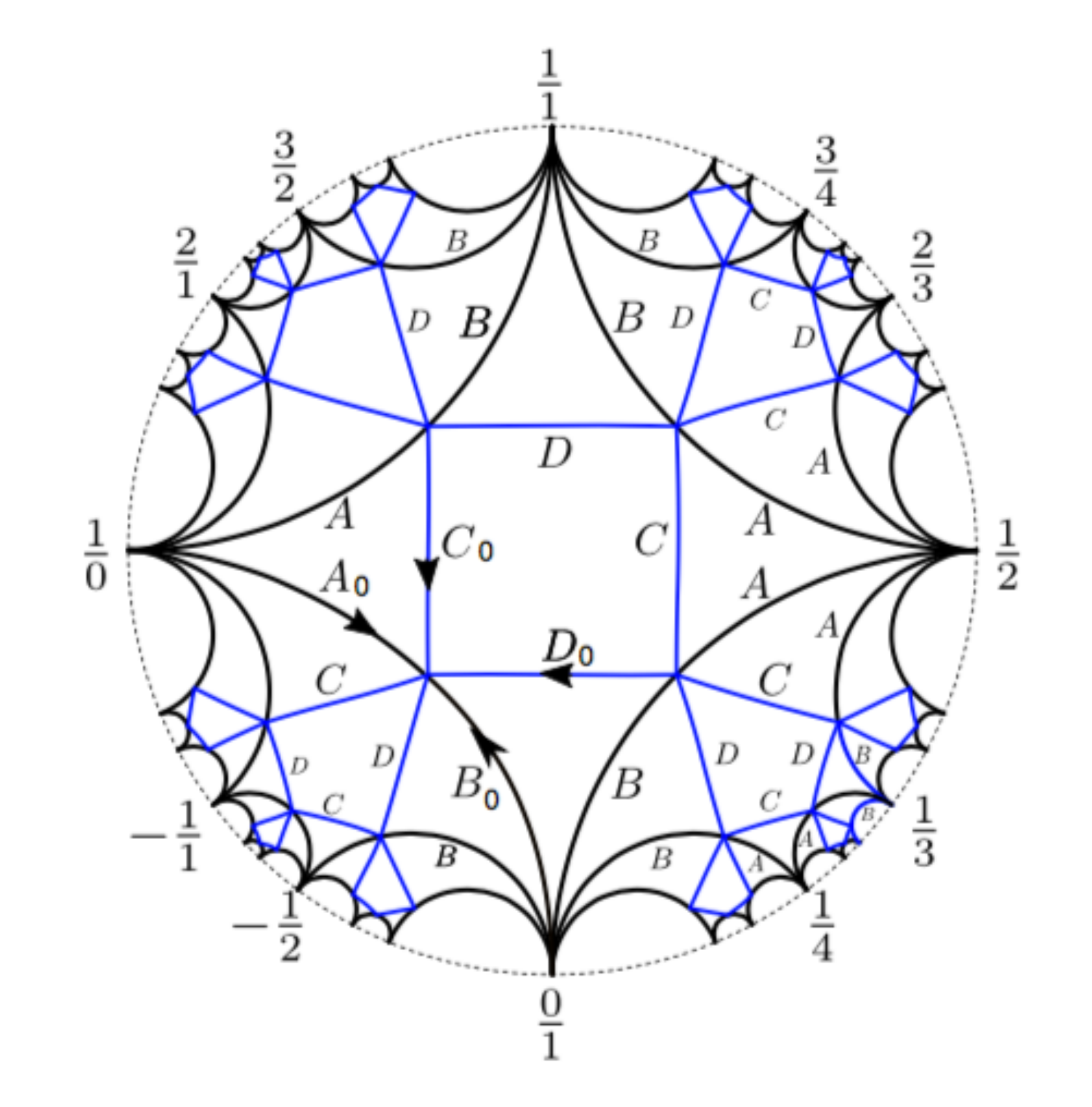}
       \caption{Diagram $D_t$, $t \neq 0, 1, \infty$}
        \label{fig:Dt}         

    \end{subfigure}
    \caption{}
    \label{fig:AllDt}
\end{figure}

\begin{remark}
\label{Dtcolapsado}
As $t$ approaches to 0 and 1, the inscribed rectangle collapses to the diagonals $\langle 1/0, 1/2 \rangle$ or to the diagonal $\langle 0/1, 1/1 \rangle$, respectively. See Figure \ref{fig:colap}.
\end{remark}

\begin{figure}[ht]
\includegraphics[width=5cm]{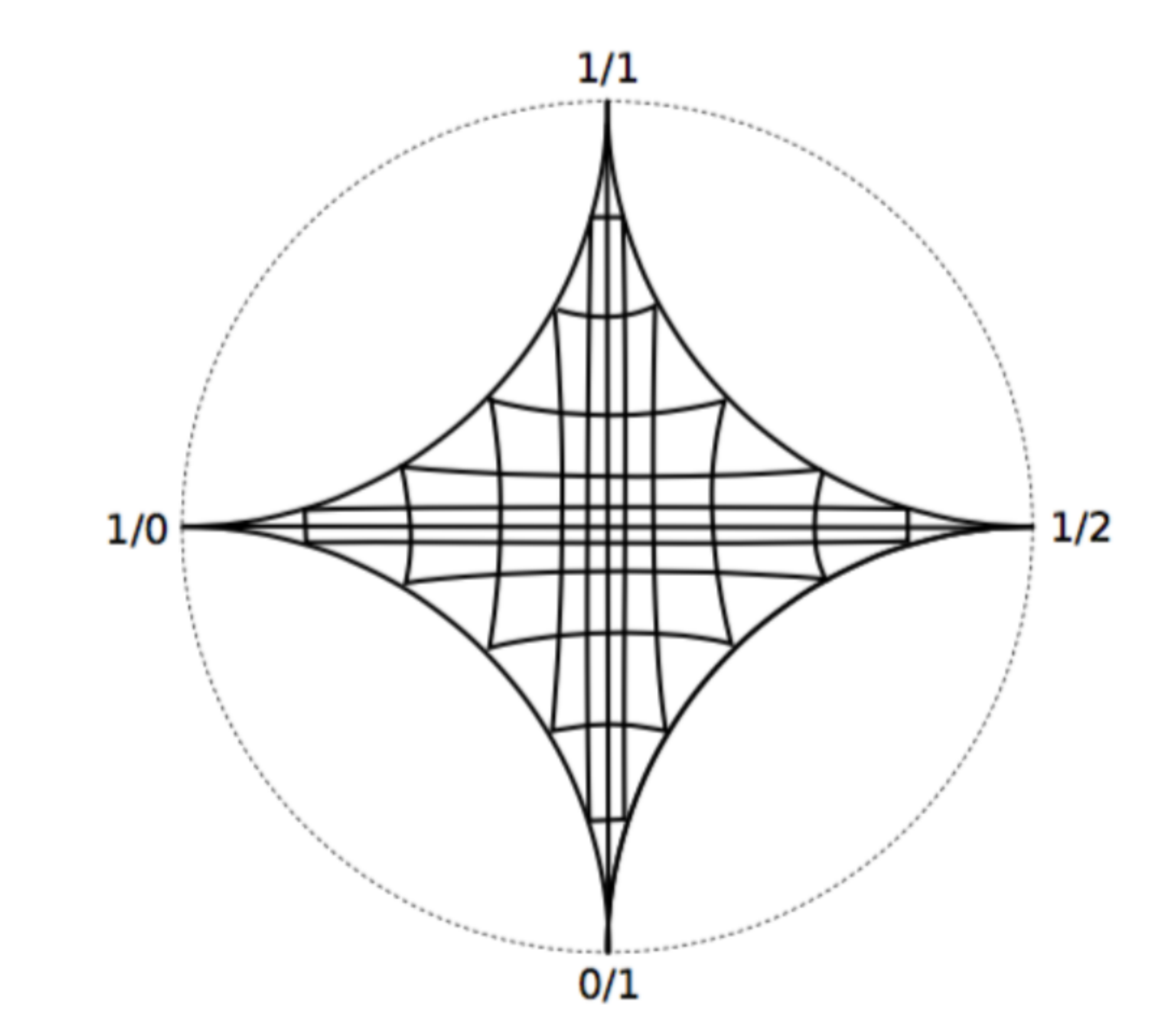}
\caption{Collapsing a $D_t$ diagram.}
\label{fig:colap}
\end{figure}

For a given reduced rational number $\beta /\alpha$, let $\gamma$ denote an oriented edge-path from $1/0$ to $\beta/\alpha$ in $D_t$ with $0\leq t \leq \infty$.

\begin{definition} An edge-path $\gamma$ is called minimal if no two consecutive edges in $\gamma$ lie on the boundary of the same triangle face or rectangle face in $D_t$.
\end{definition}

Then for every minimal edge-path $\gamma$ in $D_t$, Floyd-Hatcher construct a corresponding branched surface $\Sigma_\gamma$. Four basic branched surfaces, $\Sigma_A$, $\Sigma_B$, $\Sigma_C$, and $\Sigma_D$ are assigned to the labelled edges. See figure \ref{fig:branch}. 
\begin{figure}[ht]
\includegraphics[width=10cm]{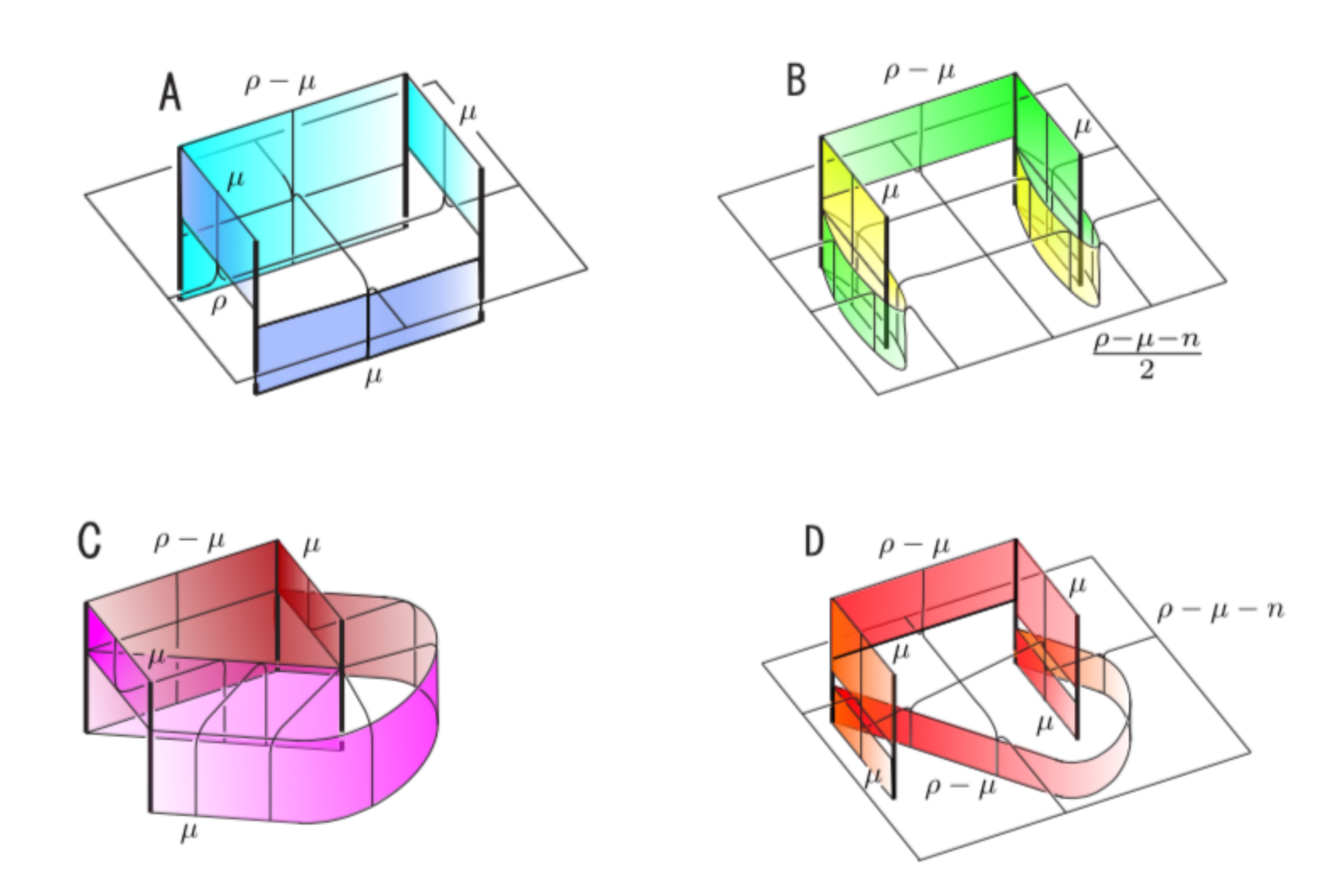}
\caption{Branched pieces for edges $A, B, C$ and $D$}
\label{fig:branch}
\end{figure}

We regard $S^3$ as the two point compactification of $S^2\times\mathbb{R}$ and we place the link $L_{\beta/\alpha}\subset S^2 \times I$ so that it meets $S^2 \times \{0\}$ and $S^2 \times \{1\}$ each in two arcs and each intermediate level in four
points. We think of each level $S^2 \times \{r\}$ as the quotient $\mathbb{R}^2 /\Gamma$, where $\Gamma$ is the group generated by $180^\circ$ rotations of $\mathbb{R}^2$ about the integer lattice points $\mathbb{Z}^2$. The four points of the link at each
intermediate level are precisely the four points of $\mathbb{Z}^2/\Gamma$. The two arcs at level $r=1$ have slope $\beta/\alpha$ and those arcs at level $r=0$ have slope $1/0$. $PSL_2 (\mathbb{Z})$ acts linearly on the level sphere $S^2 \times \{r\} = \mathbb{R}^2/\Gamma$, leaving $\mathbb{Z}^2/\Gamma$ invariant.

The vertices of the diagrams $D_1, D_0= D_{\infty}, D_t$ correspond to the slopes of arcs in the level spheres.

Let $e_1, \dots, e_k$ be the sequence of edges of a minimal edge-path $\gamma$. An edge $e_i$ is the image of one of the four edges, $A_0$, $B_0$, $C_0$, $D_0$, see Figure \ref{fig:Dt}, under a unique $g_i \in G$. To get $\Sigma_\gamma$ we first apply $g_i\times id_I$ to the appropriated surface $\Sigma_A$, $\Sigma_B$, $\Sigma_C$, or $\Sigma_D$, and then scale vertically into interval $[(i-1)/k, i/k]$.

Finally, a surface carried by one of the branched surfaces $\Sigma_\gamma$ is determined by $\mu$ and $\rho$, the numbers of sheets of the surface along each component of the 2-bridge link $L_{\beta/\alpha}$, and by how the surface branches in each segment $\Sigma_A$, $\Sigma_B$, $\Sigma_C$, or $\Sigma_D$ of $\Sigma_\gamma$. We set $t=\mu/\rho$, which is the subscript of $D_t$ .

Floyd and Hatcher proved that every essential surface in $S^3-L_{\beta/\alpha}$ is carried by some branched surface corresponding to a minimal edge-path from $1/0$ to $\beta/\alpha$ in $D_t$, and, conversely, an orientable surface carried by such a branched surface is essential.

A branched surface may carry non-orientable surfaces. Moreover, there may be an essential non-orientable surface which is not carried by any branched surface.

There is a unique finite sequence of quadrilaterals $Q_{\beta/\alpha}$ such that the first one contains the vertex $1/0$, the last one contains the vertex $\beta/\alpha$ and every pair of consecutive ones intersects in a single edge. 

\begin{remark}
\label{sillaA}
In a $D_t$ diagram with $t\neq 0, \infty$, the first and the last edges in any edge-path are of type $A$.
\end{remark}

\subsection{Edge-paths and essential saddles}
\label{sectionpathsandsaddles}

Let $S\subset S^3- L_{\beta/\alpha}$ be a compact orientable essential surface with boundary on $L_{\beta/\alpha}\subset S^2 \times I \subset S^3$.

We may isotope $S$ so that:
\begin{enumerate}
\item Each component of $\partial S$ is either a meridian of $L$ in $S^2 \times (0,1)$, or is transverse to all meridians of $L$.
\item $S$ is transverse to $S^2 \times \partial I$ and lies in $S^2 \times I$ near $L \cap (S^2 \times \partial I)$.
\item The projection $S\cap (S^2\times I)\rightarrow I$ is a Morse function with all its critical points in the interior of $S$.
\end{enumerate}
A transverse intersection $S\cap S^2_r$, $S_r^2= S^2\times r$, for $0<r<1$, can contain no arcs which are peripheral in $S_r^2- L_{\beta/\alpha}$, in view of $(1)$ and the $\partial$-incompressibility of $S$.
As $r$ varies from 0 to 1, the point $\lambda_r \in D_t$ can change only at critical levels of the projection $S\cap(S^2\times I)\rightarrow I$, in fact, only at saddles. A saddle where $\lambda_r$ changes we call an essential saddle. So we obtain a finite sequence of $\lambda_r'$s, say $\lambda_0, ..., \lambda_k$, with $\lambda_{i+1}\neq \lambda_{i}$ for all $i$. By $(2)$, $\lambda_0$ is the vertex $1/0$ of $D_t$ and $\lambda_k$ is the vertex $\beta/\alpha$.

We can isotope $S$ to lie in $S^2 \times I$ and have all its critical points essential saddles, and also still satisfy $(1)-(3)$ above, see section 7 of \cite{FH}.

The possibilities, up to level-preserving isotopy, for an essential saddle corresponding to a segment $\langle \lambda_i, \lambda_{i+1} \rangle$ on an $A-, B-, C-$ or $D-$type edge of $D_t$ are shown in Figure \ref{fig:Saddletypes}. The two leftmost vertices depict $K_2 \cap S_r^2$ and the rightmost vertices depict $K_1 \cap S_r^2$.

\begin{figure}[ht]
    \centering
    \begin{subfigure}[b]{0.5\textwidth}
        \includegraphics[width=\textwidth]{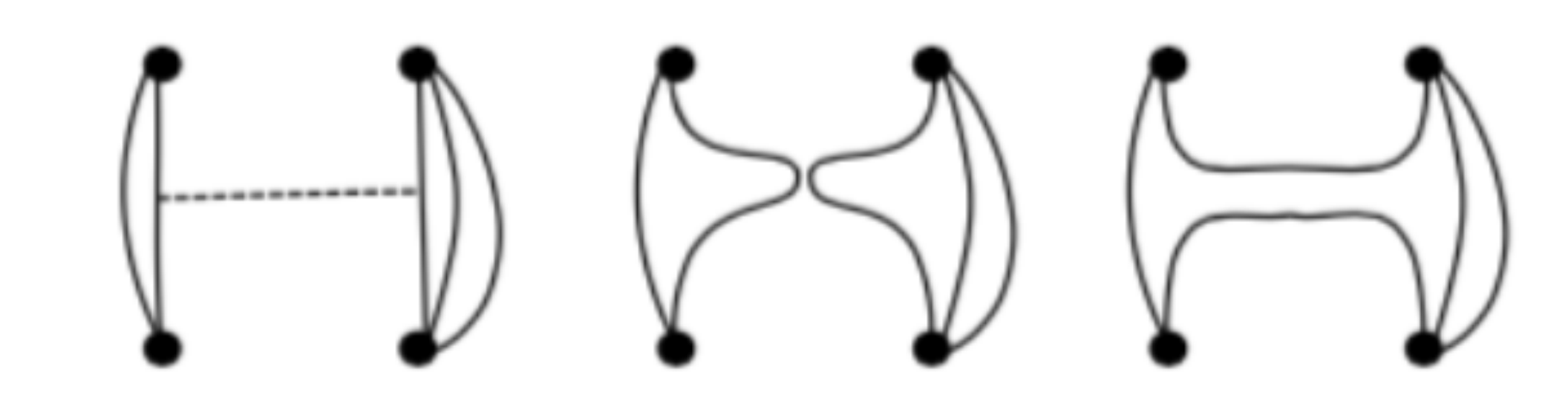}
        \caption{A-type saddle}
        \label{fig:A}
    \end{subfigure}
    \vfill
   
    \begin{subfigure}[b]{0.5\textwidth}
        \includegraphics[width=\textwidth]{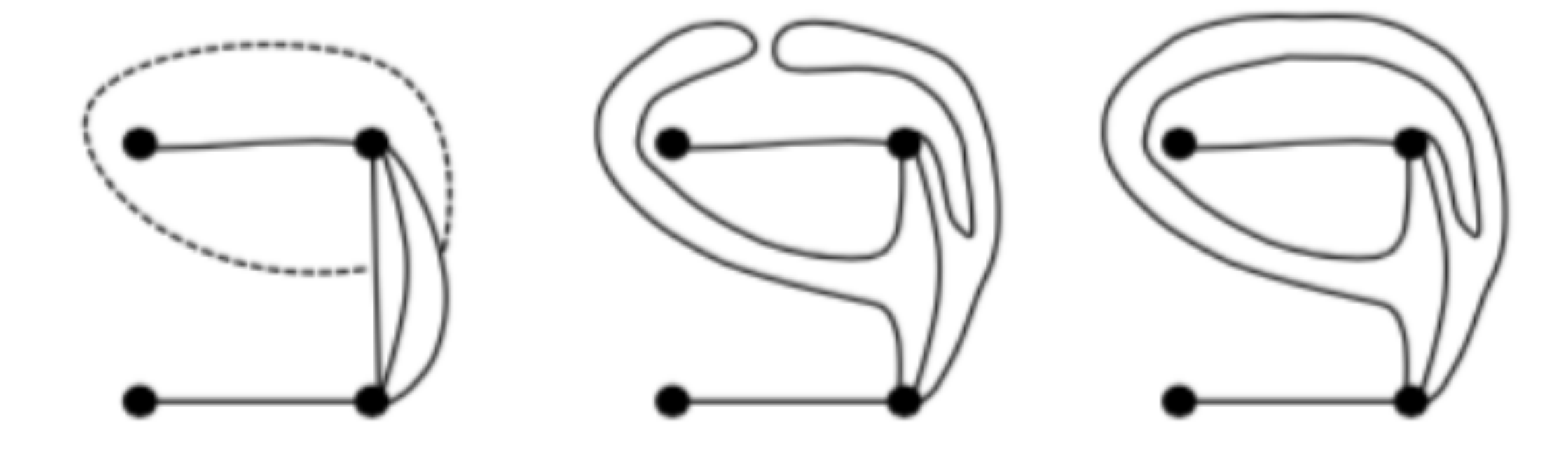}
        \caption{B-type saddle}
        \label{fig:B}
    \end{subfigure}
    \vfill
    
    \begin{subfigure}[b]{0.5\textwidth}
        \includegraphics[width=\textwidth]{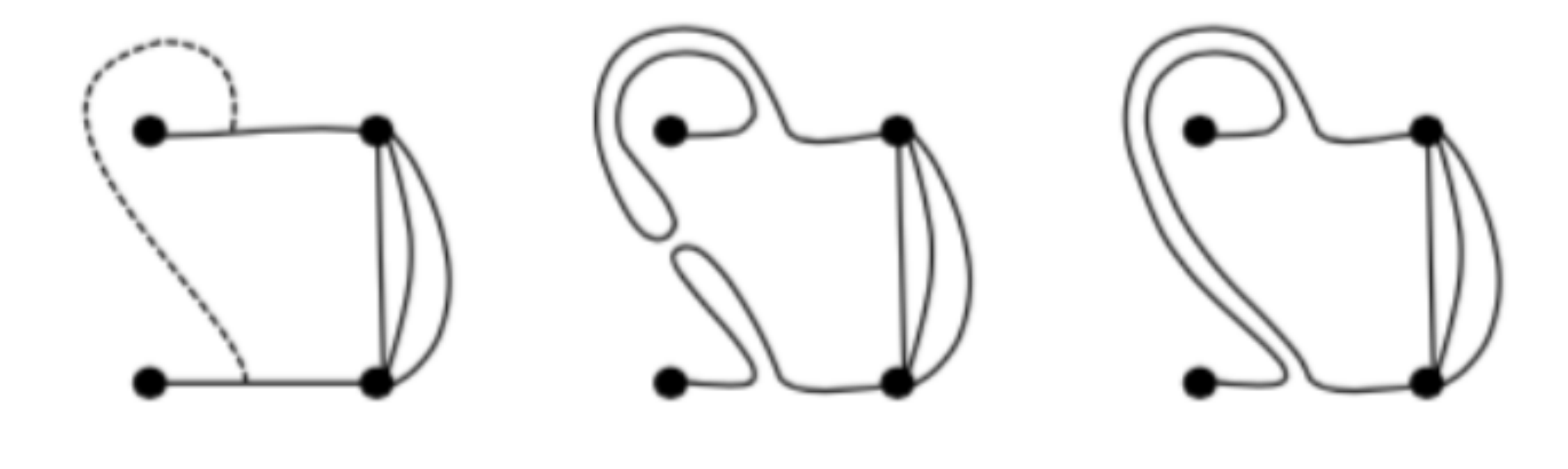}
       \caption{C-type saddle}
        \label{fig:C}         

    \end{subfigure}
    \vfill
 \begin{subfigure}[b]{0.5\textwidth}
        \includegraphics[width=\textwidth]{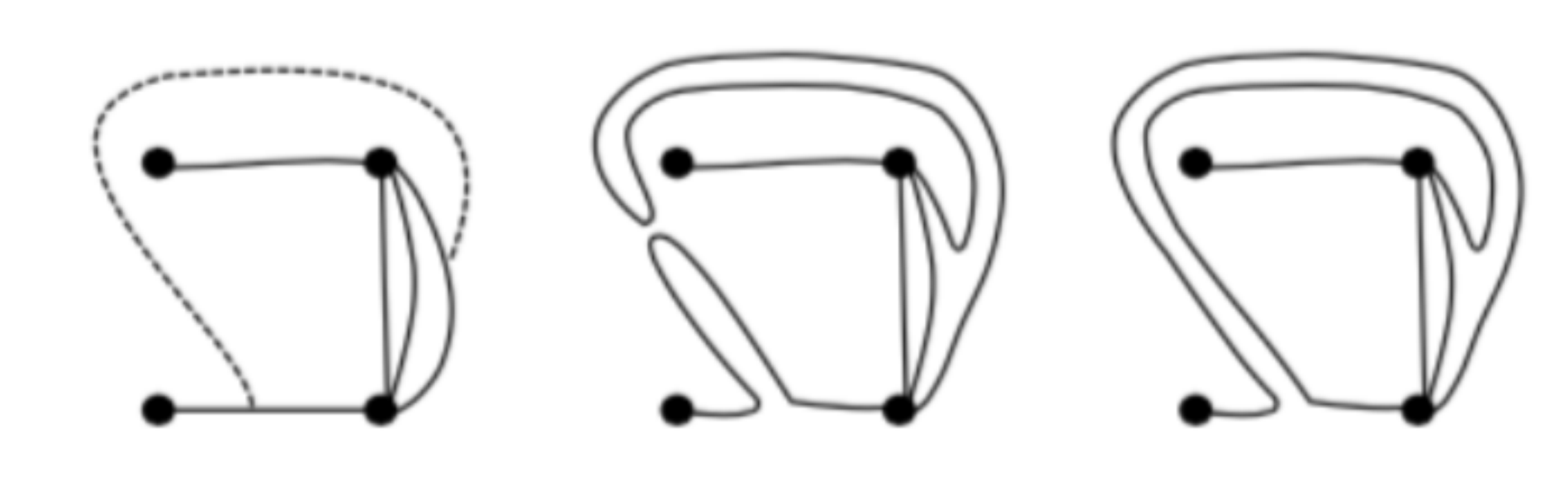}
       \caption{D-type saddle}
        \label{fig:D}         

    \end{subfigure}
    \caption{Saddle types: The two leftmost vertices depict $K_2 \cap S_r^2$ and the rightmost vertices depict $K_1 \cap S_r^2$.}
    \label{fig:Saddletypes}
\end{figure}

The corresponding saddle to an $A-, B-, C-$ or $D-$type edge of $D_t$, will be called an $A-, B-, C-$ or $D-$type saddle, respectively.

\section{General results}
\label{general results}

Let $L_{\beta/\alpha}=K_1 \cup K_2$ be a rational link embedded in $S^3$ and let $S\subset S^3- L_{\beta/\alpha}$ be a connected, compact, essential and orientable surface, both as in Section \ref{sectionpathsandsaddles}. Assume that $S$ has $n$-boundary components in $K_1$, which are non-meridional and $n\neq 0$, i.e,  $\mu$ is a multiple of $n$; and has one boundary component parallel to $K_2$, i.e, $\rho=1$. Let us denote by $\partial_i S$ the set of boundary components of $\partial N(K_i)\cap S$, for $i=1,2$. Observe that $\partial_2 S$ consists only of one curve whose slope is an integer; and $\partial_1 S$ of $n$ parallel curves with slope $p/q$, with respect a meridian and preferred longitude in each component of the link.  We denote the linking number of  $L_{\beta/\alpha}=K_1 \cup K_2$ by $lk(K_1, K_2)$.

In the following lemmas we will determine the saddle types corresponding to a minimal edge-path associated to $S$. Since there is a bijective correspondence between edges, saddles and pieces of branched surfaces, the results can be applied to the three concepts.

Since $\mu/\rho \neq 0, \infty$ and by Remark \ref{sillaA} the first and last saddles are of type $A$. By Lemma 7.1 and Figure 7.2 of \cite{FH}, we have the following statement:

\begin{lemma}
\label{lemma:saddles-come-in-groups}
Suppose that $\mu\neq 1$.
\begin{enumerate}
\item $B-$type saddles come in groups of $(\mu-1)/2$ saddles.
\item $D-$type saddles come in groups of $(\mu-1)$ saddles.
\end{enumerate}
\end{lemma}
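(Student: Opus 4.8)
The plan is to reduce the statement to the local saddle analysis of Floyd and Hatcher, specialized to the case $\rho = 1$. First I would recall that the branched surface $\Sigma_\gamma$ is assembled from the elementary pieces $\Sigma_A, \Sigma_B, \Sigma_C, \Sigma_D$ of Figure \ref{fig:branch}, one for each edge of the minimal edge-path $\gamma$, and that an embedded orientable surface $S$ carried by $\Sigma_\gamma$ is completely determined by the sheet numbers $\mu$ along $K_1$, $\rho$ along $K_2$, together with the branching data in each piece. Each edge of $\gamma$ of type $B$ (resp.\ $D$) thus contributes one $\Sigma_B$- (resp.\ $\Sigma_D$-) piece, and the saddle points of $S$ lying in that piece form precisely the group whose size the lemma asserts. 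Counting the $B$- and $D$-type saddles of $S$ therefore reduces to counting the saddle points that a surface with $\mu$ sheets along $K_1$ and a single sheet along $K_2$ must contain inside one such elementary piece.

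Next I would examine a single $\Sigma_B$ piece. Reading off the local model of Figure \ref{fig:B}, the $\mu$ strands running along the $K_1$-punctures must be banded together and matched to the single strand ($\rho = 1$) running along the $K_2$-punctures as the slope advances from $\lambda_i$ to $\lambda_{i+1}$; since a $B$-type saddle reorganizes the strands two at a time, completing this transition requires exactly $(\mu-1)/2$ saddle points, which make up one group. I would then carry out the identical bookkeeping for a $\Sigma_D$ piece using Figure \ref{fig:D}, where each $D$-type saddle advances the configuration by a single strand, so that $\mu - 1$ saddle points are needed to complete one group. This is exactly the content of Lemma 7.1 and Figure 7.2 of \cite{FH} once one sets $\rho = 1$.

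Finally I would verify the consistency conditions implicit in the statement: that $(\mu-1)/2$ is a non-negative integer, which forces $\mu$ to be odd and is compatible with $\rho = 1$ and the orientability of $S$; and that the hypothesis $\mu \neq 1$ guarantees each group is non-empty, so that the $B$- and $D$-type pieces genuinely occur with the asserted multiplicities. The main obstacle is the strand-counting inside the elementary pieces: one must check carefully, from the local pictures of Figure \ref{fig:Saddletypes}, that a $B$-saddle changes the relevant strand count by two while a $D$-saddle changes it by one, and that the general Floyd--Hatcher count, stated for arbitrary $\mu$ and $\rho$, collapses to these $\rho$-independent values when $\rho = 1$. Once this local bookkeeping is pinned down, the global statement follows immediately from the correspondence between edges, branched pieces and saddles.
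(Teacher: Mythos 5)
Your proposal is correct and follows essentially the same route as the paper, which simply derives the lemma by citing Lemma 7.1 and Figure 7.2 of \cite{FH} and specializing the saddle counts in each elementary branched piece $\Sigma_B$, $\Sigma_D$ to the case $\rho=1$. Your additional bookkeeping (a $B$-saddle merging strands two at a time, a $D$-saddle one at a time, hence $(\mu-1)/2$ and $\mu-1$ saddles per piece) is exactly the content of the cited figure, so there is nothing further to add.
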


Next we will prove that only edges of type $A, B$ and $D$ can occur. Choosing an orientation for $S$ will induce an orientation on the boundary components of $S$ and on the arcs of $S\cap S^2_{\epsilon}$, for $\epsilon$ before the first $A-$type saddle; choose one. When two arcs are being fused by a saddle, in a small neighborhood before the fusion occurs, we see two small arcs with opposite orientations. 

\begin{lemma}\label{lemma:no-c-type}
 There are no $C$-type saddles.
\end{lemma}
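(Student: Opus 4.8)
The plan is to rule out $C$-type saddles by combining an orientation/parity obstruction with the geometric picture of what a $C$-type saddle does to the arc system $S\cap S^2_r$. The paragraph preceding the statement already sets up the key device: after fixing an orientation on $S$, every arc of $S\cap S^2_\epsilon$ inherits an orientation, and at a saddle the two arcs being fused carry \emph{opposite} orientations in a small collar before the fusion. The strategy is to use this as a local invariant that each saddle type must respect, and then show a $C$-type saddle is incompatible with it.

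\medskip
\textbf{Step 1: Record the orientation data going into and out of each saddle.} First I would examine Figure~\ref{fig:C} carefully and label, for a $C$-type saddle, the arcs of $K_2\cap S_r^2$ (the two leftmost punctures) and $K_1\cap S_r^2$ (the rightmost puncture) immediately below and immediately above the critical level. The $C$-type saddle, by its branched piece $\Sigma_C$ in Figure~\ref{fig:branch}, fuses two arcs in a way that is geometrically distinct from the $A$, $B$, $D$ pieces; the aim is to read off exactly which pair of arcs it merges and how the resulting arcs are positioned relative to the four punctures.

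\medskip
\textbf{Step 2: Impose the opposite-orientation condition at the fusion.} For the saddle to be consistent with a \emph{coherently oriented} surface $S$, the two arc-ends that come together at the saddle must carry opposite orientations, so that the two arcs glue into a single oriented arc (or the saddle joins two oriented pieces compatibly). I would then check that, in the $C$-type picture, this forces an orientation assignment on the neighboring arcs that cannot be globally propagated: tracing the induced orientations around the four punctures of $S_r^2-L_{\beta/\alpha}$ yields a contradiction, because the arcs meeting $K_1$ versus $K_2$ end up demanding incompatible orientations (equivalently, the signed count of strand-ends at one of the punctures fails to balance, whereas orientability of $S$ requires it to balance). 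This is where I expect the crux of the argument to lie: pinning down precisely why $A$, $B$, $D$ survive this test while $C$ fails, and phrasing it so it does not secretly re-use the conclusion. The cleanest route is probably to observe that a $C$-saddle reverses the cyclic or relative order of the oriented arc-ends at the punctures in a way no orientable surface can realize.

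\medskip
\textbf{Step 3: Conclude.} Having shown that a single $C$-type saddle cannot occur without violating the coherent orientation of $S$ established in the preceding paragraph, I conclude that the minimal edge-path associated to $S$ contains no $C$-type edges, which is the statement. A subtle point to watch is the hypothesis that $S$ is orientable with $\rho=1$ and $\partial_2 S$ a single curve of integer slope: the single boundary on $K_2$ is what makes the orientation count at the $K_2$-punctures rigid, so I would make sure the argument explicitly uses $\rho=1$ rather than the general case, since Floyd--Hatcher branched surfaces do carry $C$-pieces in general. The main obstacle, then, is turning the visual content of Figure~\ref{fig:Saddletypes}(c) into a rigorous orientation-parity contradiction rather than an appeal to the picture.
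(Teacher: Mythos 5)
Your plan follows the same route as the paper's proof: orient $S$, use the induced orientations on the arcs of $S\cap S^2_r$, and show that the local picture of a $C$-type saddle is incompatible with the orientation data forced at the two $K_2$-punctures by the hypothesis $\rho=1$. You have correctly identified both the relevant invariant and the fact that the single boundary component on $K_2$ is what makes it rigid.

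The gap is exactly the step you flag as the crux and defer: you never say \emph{which} property of the arc system a $C$-saddle violates, and the two candidate mechanisms you float (a failure of the signed count to ``balance,'' or a reversal of the cyclic order of arc-ends) are not the ones that work. The signed count at a $K_2$-puncture is not required to vanish; the paper's point is that it is required to be $+1$ at one $K_2$-puncture and $-1$ at the other. Concretely: since $\rho=1$, at the first level $S^2_\epsilon$ there is exactly one arc of $S\cap S^2_\epsilon$ joining the two punctures of $K_2\cap S^2_\epsilon$, so one puncture sees a single outgoing arc-end and the other a single incoming arc-end. This ``opposite orientations at the two $K_2$-vertices'' property is a level-independent invariant (it is the $|\Sigma_v|$ formalized right after the lemma, and used again in Lemma \ref{lemma:signsum-and-linking}), because at any saddle the two fused arc-ends carry opposite orientations and cancel. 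The local model of a $C$-type saddle in Figure \ref{fig:C} produces, after a $G$-transformation, an arc configuration at the two $K_2$-punctures that cannot have signed counts $+1$ and $-1$; that single comparison is the whole contradiction. So your Step 2 should be replaced by this one computation on the before/after pictures at the $K_2$-punctures; the $K_1$-punctures and the global propagation of orientations around all four punctures play no role.
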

\begin{proof}

At the first level, $S^2_{\epsilon}$, there is only one arc of $S\cap S^2_{\epsilon}$ connecting the vertices of $K_2\cap S^2_{\epsilon}$. This implies that, in a small neighborhood around one of the vertices of $K_2\cap S^2_{\epsilon}$, we see only one arc pointing out and around the other vertex we see only one arc pointing in; we see opposite orientations around these vertices. This property must be preserved for all the different levels $S^2_{r}$.

Now, if a $C$-type saddle exists then, after a $G$ transformation, it looks like in Fig. \ref{fig:C} . But that will imply that the orientations around the vertices $K_2\cap S^2_{r}$, at some $r$, are no longer opposite.

\end{proof}

One crucial object that we used on the proof of Lemma \ref{lemma:no-c-type} and that we will use is the orientation of $S \cap S^2_{\epsilon}$ around a small neighborhood of a vertex. Once that we orientate $S$, it induces an orientation on the arcs $S \cap S^2_{\epsilon}$ around a vertex, we can assign a $+1$ to each arc pointing out and a $-1$ to an arc pointing in. We can then compute the sum of the signs around a vertex $v$; we denote it as $\Sigma_v$. Observe that $\Sigma_v$ is independent of the level $S^2_{r}$ and it reverses its sign if we change the orientation of $S$. So, $|\Sigma_v|$ is a constant that is independent of the level $S^2_{r}$ and the orientation of $S$.

\begin{lemma}\label{lemma:signsum-and-linking}
If the boundary slope of $\partial_1 S$ is of the form $p/q$ with $p, q \in \mathbb{Z}-\{0\}$. Then $|p/q \Sigma_v| =  | lk(K_1, K_2)|$  for each vertex $v$ in $K_1\cap S^2_{\epsilon}$
\end{lemma}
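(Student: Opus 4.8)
The plan is to read the integer $\Sigma_v$ as an algebraic intersection number on the torus $\partial N(K_1)$ and to recover the linking number from the fact that $\partial S$ bounds the surface $S$, which is disjoint from $L_{\beta/\alpha}$. Concretely, I would establish two identities and then combine them: first, that $\Sigma_v$ equals, up to sign, the algebraic intersection of the oriented curve $\partial_1 S$ with a meridian $\mu_1$ of $K_1$; and second, that the slope data and the linking number are tied together by $lk(K_1,K_2)=\pm p\,N$, where $N$ is the algebraic count of the components of $\partial_1 S$. Multiplying the first identity by $p/q$ then reproduces the second, which is exactly the assertion $|\frac{p}{q}\Sigma_v|=|lk(K_1,K_2)|$.

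For the first identity I would fix $v\in K_1\cap S^2_\epsilon$ and let $m_v$ be the component of $\partial N(K_1)\cap S^2_\epsilon$ surrounding $v$. Since the disk it cuts off in $S^2_\epsilon$ meets the link only at $v$, the curve $m_v$ is a meridian $\mu_1$ of $K_1$. The arcs of $S\cap S^2_\epsilon$ incident to $v$ are in bijection with the points of $\partial_1 S\cap m_v$, and the rule ``$+1$ for an arc pointing out, $-1$ for an arc pointing in'' is precisely the local intersection sign of the oriented $\partial_1 S$ with $m_v$. Hence $\Sigma_v=[\partial_1 S]\cdot[\mu_1]$ on $\partial N(K_1)$. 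Writing $[\partial_1 S]=N(p\,\mu_1+q\,\lambda_1)$ (each component has slope $p/q$, so is homologous to $\pm(p\,\mu_1+q\,\lambda_1)$, with $N$ their signed sum) and using $[\lambda_1]\cdot[\mu_1]=\pm 1$, this gives $\Sigma_v=\pm Nq$. The identical computation at the second vertex, where $m_v$ is a meridian at a different point of $K_1$, again captures all of $\partial_1 S$ and yields the same $|\Sigma_v|$; this also explains the independence of $\Sigma_v$ from the level and the reversal under a change of orientation noted in the text.

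For the second identity I would use that $S\subset S^3\setminus K_1$, so $\partial S$ bounds in the complement of $K_1$ and therefore $lk(\partial S,K_1)=0$. Splitting $\partial S=\partial_1 S\cup\partial_2 S$ and applying $lk(\mu_1,K_1)=1$, $lk(\lambda_1,K_1)=0$, $lk(\mu_2,K_1)=0$, $lk(\lambda_2,K_1)=lk(K_1,K_2)$, one finds $lk(\partial_1 S,K_1)=Np$ and $lk(\partial_2 S,K_1)=\pm\,lk(K_1,K_2)$, the sign coming from $\rho=1$, i.e. the longitudinal coefficient of $\partial_2 S$ being $\pm 1$. Hence $Np=\mp\,lk(K_1,K_2)$. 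Combining with $\Sigma_v=\pm Nq$ gives $\frac{p}{q}\Sigma_v=\pm Np=\mp\,lk(K_1,K_2)$, and passing to absolute values proves the lemma. I would point out that the auxiliary integer $N$ cancels, consistent with the statement carrying no reference to the number of boundary components.

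The hard part will be the first identity rather than the linking-number bookkeeping. One must verify carefully that the small circle around a \emph{single} vertex realizes the full meridian class of $K_1$, so that its intersection with $\partial_1 S$ records the entire $\lambda_1$-coefficient $Nq$ and not some fraction of it, and that the geometric out/in convention matches the algebraic intersection sign exactly; the hypothesis $p,q\in\mathbb{Z}\setminus\{0\}$ is what guarantees the slope is genuinely transverse to both $\mu_1$ and $\lambda_1$ so that these intersection numbers behave as claimed. Once $\Sigma_v$ is correctly identified as $[\partial_1 S]\cdot[\mu_1]$, the remaining steps are routine.
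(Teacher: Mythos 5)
Your proof is correct and follows essentially the same route as the paper's: both identify $|\Sigma_v|$ with the signed intersection number of $\partial_1 S$ with a meridian of $K_1$, giving $\Sigma_v=\pm q(n_+-n_-)$, and both then use the fact that $S$ provides a homology between $\partial_1 S$ and $\partial_2 S$ in the link exterior to conclude $p(n_+-n_-)=\pm lk(K_1,K_2)$. Your phrasing of the second step via $lk(\partial S,K_1)=0$ is just the $\mu_1$-coefficient of the homology relation the paper writes down in $H_1(E(L_{\beta/\alpha}))$, so the arguments coincide.
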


\begin{proof}
By definition, we can compute $|\Sigma_v|$ around any  meridian $m$ of $K_1$. And this can be done by computing the intersections with signs of $\partial_1 S$ and $m$. As the slope of $\partial_1 S$ is $p/q$, each boundary component of $\partial_1 S$ intersects $m$ exactly $q$ times, let $n_+$ be the number of the components intersecting positively $m$ and $n_-$ the number of components which intersect $m$ negatively, then $\Sigma_v = q(n_+-n_-)$.

Now, we only need to prove that $p(n_+ - n_-)= lk(K_1, K_2)$. This can easily be seen by observing that $S$ represents an equivalence between $\partial_2 S = k \mu_2 + \lambda_2$ and $\partial_1 S = (n_+ - n_-)(p\mu_1 + q\lambda _1)$ on $H_1(E(L_{\beta/\alpha}))$ and later combine it with the relations $\lambda_1 = lk(K_1, K_2)\mu_2$ and $\lambda_2 = lk(K_1, K_2)\mu_1$.

\end{proof}

From the previous proof, it seems that we could get rid of the absolute values from the statement. But the problem is that our definition of $\Sigma_v$ has an ambiguity on its sign, it is possible to avoid it by being more specific on its definition, but we wouldn't win much; it is more convenient to use and compute $|\Sigma_v|$.

\begin{lemma}
\label{lemma:sillas}
Suppose that $\mu>1$, and let $S$ be a surface given by an edge-path in $D_t$.
\begin{enumerate}
\item If there is a $B-$type saddle, then $|\Sigma_v| = 1$ for all $v$ in $ K_1  \cap S^2_{\epsilon}$. Moreover, each boundary component of $S$ in $K_1$ is longitudinal and $\mu = n$. 
\item If there is a $D-$type saddle, then  $|\Sigma_v| = \mu$ for all $v$ in $ K_1  \cap S^2_{\epsilon}$. Moreover, all boundary components of $S$ have the same orientation.
\end{enumerate}
\end{lemma}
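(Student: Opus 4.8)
The plan is to fix an orientation of $S$ once and for all, to record for every arc of $S\cap S^2_r$ the sign $\pm 1$ it contributes at each endpoint (as in the discussion preceding Lemma \ref{lemma:signsum-and-linking}), and then to read the two assertions off the local models of the saddles in Figures \ref{fig:B} and \ref{fig:D}, converting local orientation data into global sign counts by means of Lemma \ref{lemma:saddles-come-in-groups}. Throughout I use three facts: that $\Sigma_v$ is independent of the level $S^2_r$; that at each $K_1$-vertex $v$ there are exactly $\mu$ arc-ends at a generic level, so $|\Sigma_v|\le \mu$ with equality precisely when all $\mu$ arcs point the same way; and that $\Sigma_v = q(n_+-n_-)$ while the number of sheets is $\mu = n|q|$, since each slope-$p/q$ component of $\partial_1 S$ meets a meridian of $K_1$ exactly $|q|$ times. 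The orientation of the single arc joining the two $K_2$-vertices at level $S^2_\epsilon$ supplies the constraint $|\Sigma_{v'}| = 1$ at every $K_2$-vertex $v'$, and this is the input that propagates across the saddles.

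For (1) I would argue that the band realizing a $B$-type saddle in Figure \ref{fig:B} reaches over to a $K_2$-vertex, where the signs are forced to alternate because $|\Sigma_{v'}|=1$; consequently the two $K_1$-arcs entering a $B$-saddle carry opposite signs at $v$, so each such saddle cancels a pair of oppositely oriented arc-ends. By Lemma \ref{lemma:saddles-come-in-groups}(1) the $B$-saddles occur in a group of $(\mu-1)/2$, hence they pair off $\mu-1$ of the $\mu$ arc-ends at $v$ with cancelling signs and leave exactly one, giving $|\Sigma_v| = 1$. Substituting into $|\Sigma_v| = |q|\,|n_+-n_-| = 1$ forces $|q| = 1$ and $|n_+-n_-| = 1$, so every component of $\partial_1 S$ is longitudinal and $\mu = n|q| = n$, as claimed.

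For (2) the same bookkeeping is run with the opposite sign rule. Reading Figure \ref{fig:D}, a $D$-type saddle fuses two arcs that are both incident to $K_1$-vertices; since the band is an oriented subdisk of $S$ that does not cross over to the $K_2$-side, the two fused $K_1$-arcs must carry the \emph{same} sign at $v$. By Lemma \ref{lemma:saddles-come-in-groups}(2) a group consists of $\mu-1$ such saddles; viewing the $\mu$ arc-ends at $v$ as nodes and the saddles as edges that identify their signs, these $\mu-1$ identifications connect all $\mu$ arc-ends into a single coherently oriented family. Hence all $\mu$ arcs point the same way, $|\Sigma_v| = \mu$, and therefore $|n_+-n_-| = n$, i.e. all boundary components of $S$ receive the same orientation.

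The step I expect to be the main obstacle is the rigorous extraction of the sign rule at each saddle type from the figures --- that a $B$-saddle genuinely fuses oppositely oriented arcs while a $D$-saddle fuses equally oriented ones --- together with the verification that the $(\mu-1)/2$ (respectively $\mu-1$) saddles of a group act on distinct arc-ends and connect them exactly as claimed, rather than re-using an end or splitting the arcs into several orientation classes. This forces one to track the arcs and their induced orientations consistently across the intermediate levels, where the arc system itself is changing; it is precisely here that orientability of $S$ and the $K_2$-constraint $|\Sigma_{v'}|=1$ must be invoked with care, in particular to rule out the spurious possibility $|\Sigma_v| = 0$ in case (1).
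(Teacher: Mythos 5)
Your proposal is correct and follows essentially the same route as the paper's proof: read off the sign rule at each saddle type from the local models (a $B$-saddle fuses two oppositely oriented arc-ends at a $K_1$-vertex, a $D$-saddle propagates a coherent orientation), use the grouping counts of Lemma \ref{lemma:saddles-come-in-groups} to pair off $\mu-1$ of the $\mu$ arc-ends (respectively to orient all $\mu$ coherently), and convert $|\Sigma_v|$ into the boundary-slope statements via $\Sigma_v=q(n_+-n_-)$ from Lemma \ref{lemma:signsum-and-linking}. The only cosmetic difference is that you justify the opposite-sign rule at a $B$-saddle through the constraint $|\Sigma_{v'}|=1$ at the $K_2$-vertices, whereas the paper derives it directly from the fact that a $B$-saddle fuses two parallel arcs of the same slope, which by orientability must carry opposite orientations.
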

\begin{proof}

(1) By Lemma \ref{lemma:saddles-come-in-groups} the number of arcs in $S\cap S^2_{\epsilon}$ joining the components of $K_1\cap S^2_{\epsilon}$ is odd. Before a $B-$type saddle appears, there must be an $A-$type saddle. After passing it, we see an even number of arcs joining the components of $K_1\cap S^2_{\epsilon}$. In order to perform a $B-$type saddle, two arcs of the same slope must be joined; thus their orientation are opposite. So, all the arcs joining the components of $K_1\cap S^2_{\epsilon}$ can be paired together on opposite orientation pairs. This implies that $|\Sigma_v| = 1$ for each vertex $v \in K_1\cap S^2_{\epsilon}$. 

By Lemma  \ref{lemma:signsum-and-linking}, we have that the slope $\partial_1 S = p/q $ is equal to $lk(K_1, K_2)$, hence  $\partial_1 S$ is an integer (its components are longitudinal). 

(2) After a $G$ transformation, a $D-$type saddle looks like in Fig. \ref{fig:D}. When performing a $D-$type saddle, the configuration of arcs that we obtain contains two arcs of slope zero whose orientations coincide with the one on the previous arcs of slope zero. This occurs every time we perform a $D-$type saddle, and by Lemma \ref{lemma:saddles-come-in-groups} this happens $\mu -1$ times, thus the arcs $S\cap S^2_{\epsilon}$ joining the components of $K_1\cap S^2_{\epsilon}$ have the same orientation. So, $|\Sigma_v| = \mu$.

\end{proof}

An immediate consequence of Lemmas \ref{lemma:sillas} and \ref{lemma:signsum-and-linking} is the following.

\begin{corollary}
If there is a $B-$type saddle and if the boundary slope of $S\cap \partial N(K_1)$ equals $1/r$ then $\vert lk(K_1, K_2)\vert=1$ and $r=1$
\end{corollary}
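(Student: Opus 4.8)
The statement is a direct corollary of the two preceding lemmas, so the plan is simply to feed the specific slope $1/r$ into Lemma \ref{lemma:signsum-and-linking} and combine it with the value of $|\Sigma_v|$ forced by a $B$-type saddle. First I would record the bookkeeping: writing the slope as $p/q = 1/r$ means $p = 1$ and $q = r$. Since $\partial_1 S$ is non-meridional we have $r \neq 0$, so the hypothesis $p,q \in \mathbb{Z}-\{0\}$ of Lemma \ref{lemma:signsum-and-linking} is met. I would also note that the mere existence of a $B$-type saddle forces $\mu > 1$: by Lemma \ref{lemma:saddles-come-in-groups} the $B$-type saddles occur in groups of $(\mu-1)/2$, which is empty when $\mu = 1$, so a $B$-type saddle can only appear when $\mu > 1$. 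This makes Lemma \ref{lemma:sillas}(1) applicable.

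With the hypotheses in place, the next step is the computation. Lemma \ref{lemma:sillas}(1) gives $|\Sigma_v| = 1$ for every vertex $v \in K_1 \cap S^2_\epsilon$. Substituting $p/q = 1/r$ into Lemma \ref{lemma:signsum-and-linking} yields
\[
|lk(K_1,K_2)| = \left|\frac{p}{q}\,\Sigma_v\right| = \frac{|\Sigma_v|}{|r|} = \frac{1}{|r|},
\]
so that $|r|\,|lk(K_1,K_2)| = 1$.

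Finally I would invoke integrality. Since $r$ and $lk(K_1,K_2)$ are both integers, the identity $|r|\,|lk(K_1,K_2)| = 1$ can only hold with $|r| = 1$ and $|lk(K_1,K_2)| = 1$ (in particular this recovers $lk(K_1,K_2) \neq 0$ as a consequence rather than a hypothesis). The one delicate point, and the step I expect to need the most care, is upgrading $|r| = 1$ to $r = 1$: the sign of $\Sigma_v$ is ambiguous, since it flips with the orientation of $S$, so the absolute-value computation alone leaves $r = \pm 1$. To pin down the sign I would return to the signed identity $(p/q)\,\Sigma_v = lk(K_1,K_2)$ extracted in the proof of Lemma \ref{lemma:signsum-and-linking} --- equivalently, to the statement inside the proof of Lemma \ref{lemma:sillas}(1) that the boundary slope equals $lk(K_1,K_2)$, which forces $1/r = lk(K_1,K_2)$ --- and read off $r = 1$ under the orientation convention normalizing $lk(K_1,K_2)$ to be positive. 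Everything else is routine substitution.
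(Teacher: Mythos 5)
Your proof is correct and follows exactly the route the paper intends: the corollary is stated there as an immediate consequence of Lemmas \ref{lemma:sillas} and \ref{lemma:signsum-and-linking} with no written proof, and your substitution of $p/q=1/r$ together with $|\Sigma_v|=1$ and integrality is precisely that argument. Your extra care about upgrading $|r|=1$ to $r=1$ via the signed identity is a welcome refinement of a point the paper glosses over.
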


Summarizing we have:

\begin{corollary}
\label{coro:AB-and-AD-edgepaths}
Let $L_{\beta/\alpha}=K_1 \cup K_2$ be a rational link embedded in $S^3$ and let $S\subset S^3- L_{\beta/\alpha}$ be a connected, compact, essential and orientable surface. Assume that $S$ has $n$-boundary components in $K_1$, which are non-meridional and $n\neq 0$, i.e,  $\mu$ is a multiple of $n$; and has one boundary component parallel to $K_2$, i.e, $\rho=1$. Then the sequence of saddles consists only of $A-$ and $B-$type saddles; or only of $A-$ and $D-$type saddles; otherwise we will have $1 = |\Sigma_v| = \mu$ but $\mu > 1$.

\end{corollary}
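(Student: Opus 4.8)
The plan is to assemble the corollary directly from the three preceding lemmas, since each of them already isolates one of the possible obstructions and the remaining work is purely combinatorial bookkeeping. First I would invoke Lemma \ref{lemma:no-c-type} to discard $C$-type saddles entirely, so that the only saddle types occurring along the edge-path associated to $S$ are $A$, $B$, and $D$. By Remark \ref{sillaA} the first and last saddles are always of type $A$, so $A$-type saddles are always present and impose no restriction; the entire content of the corollary is therefore reduced to showing that $B$-type and $D$-type saddles cannot coexist along a single edge-path when $\mu > 1$.

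To rule this out I would argue by contradiction. Suppose the edge-path contains at least one $B$-type saddle and at least one $D$-type saddle. Recall from the discussion preceding Lemma \ref{lemma:signsum-and-linking} that, once an orientation of $S$ is fixed, the signed count $\Sigma_v$ around a vertex $v \in K_1 \cap S^2_\epsilon$ is independent of the level $r$, so that $|\Sigma_v|$ is a single well-defined constant attached to $S$, independent of both the level and the chosen orientation. Applying Lemma \ref{lemma:sillas}(1) to the $B$-type saddle yields $|\Sigma_v| = 1$ for every such $v$, while applying Lemma \ref{lemma:sillas}(2) to the $D$-type saddle yields $|\Sigma_v| = \mu$ for every such $v$. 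Because these two computations refer to the same level-independent quantity, we obtain $1 = |\Sigma_v| = \mu$, contradicting the hypothesis $\mu > 1$. Hence no edge-path associated to $S$ can mix $B$- and $D$-type saddles, and together with the absence of $C$-type saddles this leaves exactly two possibilities: the saddle sequence consists only of $A$- and $B$-type saddles, or only of $A$- and $D$-type saddles.

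For completeness I would dispose of the degenerate case $\mu = 1$ separately: by Lemma \ref{lemma:saddles-come-in-groups}, $B$-type saddles occur in groups of $(\mu-1)/2 = 0$ and $D$-type saddles in groups of $\mu - 1 = 0$, so neither type appears and the sequence is made up entirely of $A$-type saddles, which trivially falls under the claimed dichotomy.

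Since essentially all of the geometric content has already been carried out in Lemmas \ref{lemma:no-c-type} and \ref{lemma:sillas}, I do not anticipate any genuine obstacle in this step; the argument is a short combination of what has been proved. The one point that deserves to be stated explicitly, rather than passed over as obvious, is the level-independence of $|\Sigma_v|$, since it is precisely this invariance that allows the two incompatible values $1$ and $\mu$ to be compared and forces the contradiction. I would therefore emphasize that fact before concluding.
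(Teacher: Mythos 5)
Your proposal is correct and follows exactly the paper's route: the paper also derives the corollary as an immediate consequence of Lemma \ref{lemma:no-c-type} (no $C$-type saddles) and Lemma \ref{lemma:sillas}, with the same contradiction $1 = |\Sigma_v| = \mu$ against $\mu > 1$ that is even recorded in the statement itself. Your explicit remark on the level-independence of $|\Sigma_v|$ matches the discussion the paper gives just before Lemma \ref{lemma:signsum-and-linking}.
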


\begin{definition}
We will use the notation $AB-$edge-path to refer to an edge-path consisting of only $A-$ and $B-$type saddles. Similarly we use the notation $AD$- and $A-$edge-path.
\end{definition}

When $\mu=1$ the sequence of saddles is an $A-$edge-path. In this case $\mu/\rho=1$, thus the corresponding edge-path lies in the $D_1$ diagram and there are no $C-$types saddles by Lemma \ref{lemma:no-c-type}.

If $S$ is oriented surface with $\mu > 1$ then it comes from an $AD-$edge-path. Nevertheless, not all $AD-$edge-path correspond to an orientable surface. 

For instance, consider the edge-path $\langle 1/0, 0/1 \rangle$, $\langle 0/1, 1/2  \rangle$, $\langle 1/2, 1/3 \rangle$, $\langle 1/3, 3/8 \rangle$, the corresponding sequence of saddles is $ADAADA$, see Figure \ref{fig:pathnorientable}. In Figure \ref{fig:sillasnorientable} we shown the first part of the saddle sequence (recall that we are using $\mu -1$ type $D$ saddle). Observe that passing to the third saddle of type $A$ gives rise a nonorientable surface.

\begin{figure}[ht]
\includegraphics[width=5cm]{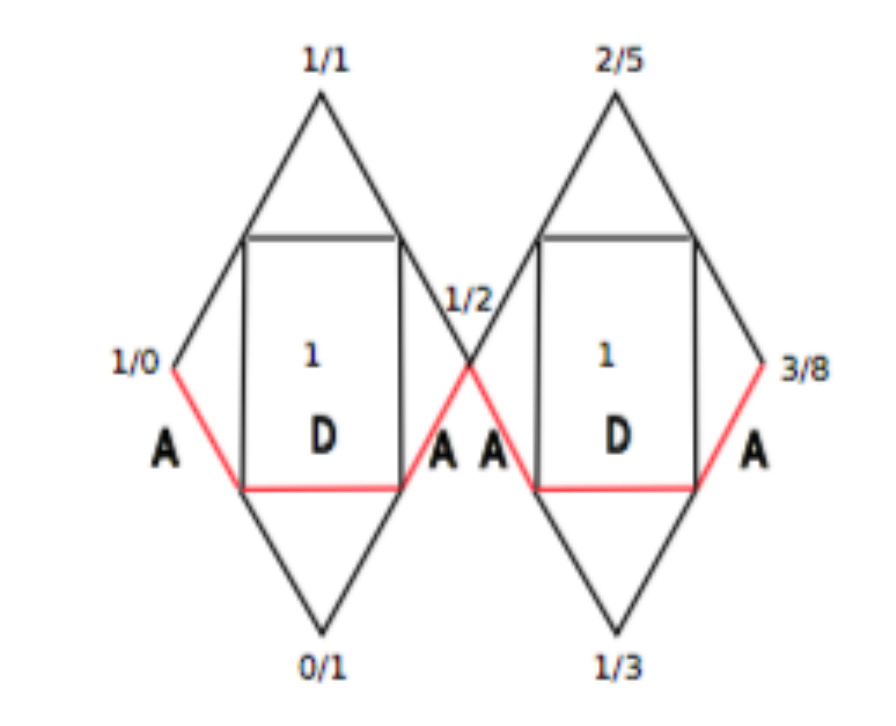}
\caption{An edge-path from $\frac{1}{0}$ to $\frac{3}{8}$}
\label{fig:pathnorientable}
\end{figure}

\begin{figure}[ht]

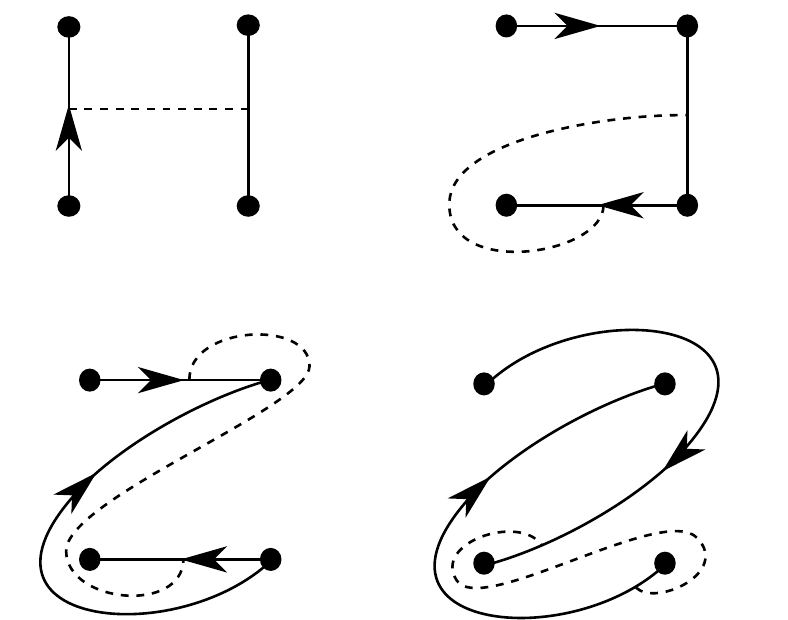
\caption{A-type saddles for the edge-path from $\frac{1}{0}$ to $\frac{3}{8}$}
\label{fig:sillasnorientable}
\end{figure}

The same observation is valid for $AB-$ or $A-$edges-paths; namely, there are such edge-paths that correspond to non-orientable surfaces. The next Lemma rules out edge-paths corresponding to non-orientable surfaces. In order to state the result, we introduce some notation.

Each reduced fraction $p/q$ in $\mathbb{Q}$ can be identified with $0/1$, $1/0$ or $1/1$ by reducing $p$ and $q$ mod 2. An $A-$type edge in $D_t$ is contained in an edge $\langle p_1/q_1, p_2/q_2 \rangle$. If $\{p_1/q_1, p_2/q_2\}$ is identified with $\{0/1, 1/0\}$ mod 2, we say that such and edge is of type $A_0$. On the other hand if $\{p_1/q_1, p_2/q_2\}$ is identified with $\{1/1, 1/0\}$ the edge is said to be of type $A_1$.  

By an $A_iX-$edge-path we will mean an $AX-$edge-path in $D_t$ that consists only of edges of type $X$ and $A_i$ with $i=0,1$ and $X=B, D$. Similarly we use the notation $A_i-$edge-path for an edge-path in $D_1$ that contains only $A_i-$type edges with $i=0,1$. 

\begin{lemma}
Let $S$ be an orientable surface and $\gamma$ be an edge-path in $D_t$ associated to $S$. Suppose that $\gamma$ is an $AX-$edge-path with $X=B,D$. Then ${\gamma}$ is an $A_iX-$-edge-path with $i=0,1$. The same result is valid for $A-$edge-paths. 
\end{lemma}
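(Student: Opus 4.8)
The plan is to argue by contradiction through a fixed orientation of $S$, reproducing in general the phenomenon displayed by the edge-path from $1/0$ to $3/8$ in Figures \ref{fig:pathnorientable} and \ref{fig:sillasnorientable}: a change of $A$-type forces a non-orientable piece. I will organize the bookkeeping by reducing each slope mod $2$. Writing each reduced fraction as one of the three classes $1/0$, $0/1$, $1/1$, the definition of $A_0$ and $A_1$ says precisely that every $A$-edge has one endpoint in the class $1/0$ and the other in the class $0/1$ (type $A_0$) or in the class $1/1$ (type $A_1$); since $G$ fixes the class $1/0$ and permutes the classes $0/1$ and $1/1$, this dichotomy is $G$-invariant and so is genuinely a property of edges. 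Note also that the two ends $1/0$ and $\beta/\alpha$ of $\gamma$ both lie in the class $1/0$, because $\alpha$ is even and $\gcd(\alpha,\beta)=1$.

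I would first reduce the problem to a single local configuration. By Corollary \ref{coro:AB-and-AD-edgepaths} the only saddles occurring along $\gamma$ are of type $A$ and of one of the types $B$ or $D$; and passing between a vertex in the class $0/1$ and a vertex in the class $1/1$ is never accomplished by a single edge, since the $D_1$-edges realizing the parity pattern $\{0/1,1/1\}$ are exactly the $G$-orbit of the diagonal $\langle 0/1,1/1\rangle$, which is deleted in forming $D_t$. Consequently the presence of both an $A_0$-edge and an $A_1$-edge in $\gamma$ is equivalent to the path visiting both the class $0/1$ and the class $1/1$, and any passage between these two classes must run through the class $1/0$. Thus a mixed edge-path necessarily contains a vertex $v$ in the class $1/0$ at which an $A_0$-edge and an $A_1$-edge meet, possibly separated by an intervening group of $B$- or $D$-saddles; it suffices to show that this local configuration cannot occur on an orientable surface.

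For the local step I would fix an orientation of $S$ and propagate it through the saddles using the signs at the punctures introduced before Lemma \ref{lemma:signsum-and-linking}. Comparing the two pictures of an $A$-saddle in Figure \ref{fig:A} after the appropriate $G$-transformation, an $A_0$-saddle and an $A_1$-saddle reconnect the arcs around the punctures in opposite ways, and therefore induce opposite orientations on the arc joining the two punctures of $K_2$. The intervening $B$- or $D$-saddles are orientation-neutral for this comparison: by Lemma \ref{lemma:sillas} the arcs they create are either paired in opposite-orientation pairs (the $B$-case) or all carry the coherent slope-zero orientation (the $D$-case), and by Lemma \ref{lemma:saddles-come-in-groups} they occur in complete groups, so they return the orientation data unchanged to the next $A$-saddle. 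Hence meeting an $A_0$- and an $A_1$-edge at $v$ forces two arcs that become identified to carry incompatible orientations, producing a M\"obius band and contradicting orientability; this is exactly the failure recorded at the third $A$-saddle in Figure \ref{fig:sillasnorientable}. The contrapositive yields the lemma, the pure $A$-edge-path case being identical with no $B$- or $D$-saddles to track. I expect the genuine difficulty to lie in this last paragraph: one must read off from the explicit saddle pictures that the two $A$-types really do propagate the chosen orientation in opposite senses and that the $B$- and $D$-groups are truly neutral, rather than, say, permuting the arcs that get identified.
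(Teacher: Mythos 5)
Your proposal follows essentially the same route as the paper: both reduce to a junction of an $A_0$-edge and an $A_1$-edge at a common even-denominator vertex (you via the mod-2 classes and the deleted diagonal, the paper via the observation that the two $A$'s of each $AD\dots A$ or $ABBA$ block share a type), and both then derive the contradiction from the mod-2 saddle pictures together with the orientation constraints of Lemma \ref{lemma:sillas} on the groups of $B$- and $D$-saddles. The step you flag as the genuine difficulty is exactly the one the paper settles by inspection of its Figures \ref{fig:lemma-noA1A0-caseA} and \ref{fig:lemma-noA1A0-caseAD}, so your argument is at the same level of rigor as the published one.
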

\begin{proof}
Assume that $\gamma$ contains edges of type $A_0$ and $A_1$. We are going to find a contradiction.

\emph{Case 1: $\gamma$ is an $A$-edge-path in $D_1$.} As $\gamma$ is made of only $A$ type saddles, there must be two consecutive saddles of type $A_0$ and $A_1$. Without loss of generality, we can assume that $A_1$ follows $A_0$. We draw the sequence of pictures module 2 for these two saddles in Fig. \ref{fig:lemma-noA1A0-caseA}. Notice that this is impossible due to orientability of $S$.

\begin{figure}
    \centering
\begingroup%
  \makeatletter%
  \providecommand\color[2][]{%
    \errmessage{(Inkscape) Color is used for the text in Inkscape, but the package 'color.sty' is not loaded}%
    \renewcommand\color[2][]{}%
  }%
  \providecommand\transparent[1]{%
    \errmessage{(Inkscape) Transparency is used (non-zero) for the text in Inkscape, but the package 'transparent.sty' is not loaded}%
    \renewcommand\transparent[1]{}%
  }%
  \providecommand\rotatebox[2]{#2}%
  \newcommand*\fsize{\dimexpr\f@size pt\relax}%
  \newcommand*\lineheight[1]{\fontsize{\fsize}{#1\fsize}\selectfont}%
  \ifx\svgwidth\undefined%
    \setlength{\unitlength}{297.63779528bp}%
    \ifx\svgscale\undefined%
      \relax%
    \else%
      \setlength{\unitlength}{\unitlength * \real{\svgscale}}%
    \fi%
  \else%
    \setlength{\unitlength}{\svgwidth}%
  \fi%
  \global\let\svgwidth\undefined%
  \global\let\svgscale\undefined%
  \makeatother%
  \begin{picture}(1,0.28571429)%
    \lineheight{1}%
    \setlength\tabcolsep{0pt}%
    \put(0,0){\includegraphics[width=\unitlength,page=1]{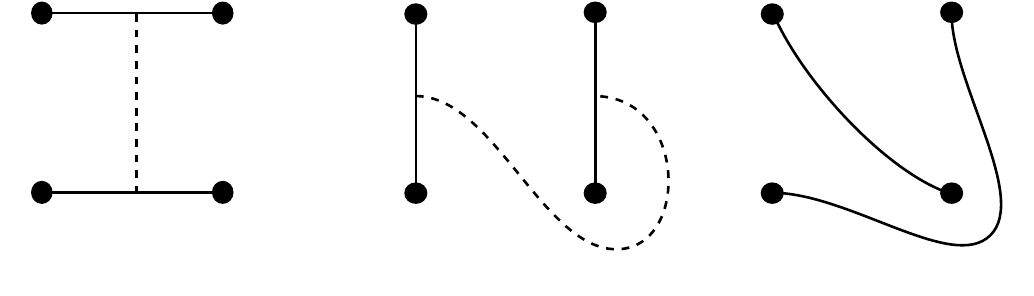}}%
    \put(0.59987544,0.00831824){\color[rgb]{0,0,0}\makebox(0,0)[t]{\lineheight{1.25}\smash{\begin{tabular}[t]{c}$A_1$\end{tabular}}}}%
    \put(0.18878948,0.17886161){\color[rgb]{0,0,0}\makebox(0,0)[t]{\lineheight{1.25}\smash{\begin{tabular}[t]{c}$A_0$\end{tabular}}}}%
  \end{picture}%
\endgroup%

    \caption{No orientability of $A$-edge-path containing both $A_0$ and $A_1$ edges. }
    \label{fig:lemma-noA1A0-caseA}
\end{figure}

\emph{Case 2: $\gamma$ is an $AD$-edge-path in $D_t$.} 
Again, in this case we will have two consecutive saddles of type $A_0$ and $A_1$; because the edge-path comes in blocks of the form $AD\dots A$ where the two $A's$ are of the same type.
The sequence of levels mod 2 is similar to the previous one, see Fig. \ref{fig:lemma-noA1A0-caseAD}, but with some extra $\mu-1$ parallel arcs.

By Lemma \ref{lemma:sillas}a those $\mu -1$ parallel arcs must have all the same orientation; moreover, around the vertices in $K_1 \cap S_\epsilon$, all the arcs are oriented in the same direction. Is not hard to see from Fig. \ref{fig:lemma-noA1A0-caseAD} that it is impossible to give a coherent orientation to all the arcs with the condition that all the $\mu$ parallel arcs have the same orientation, contradicting the orientability of $S$.

\begin{figure}
    \centering
    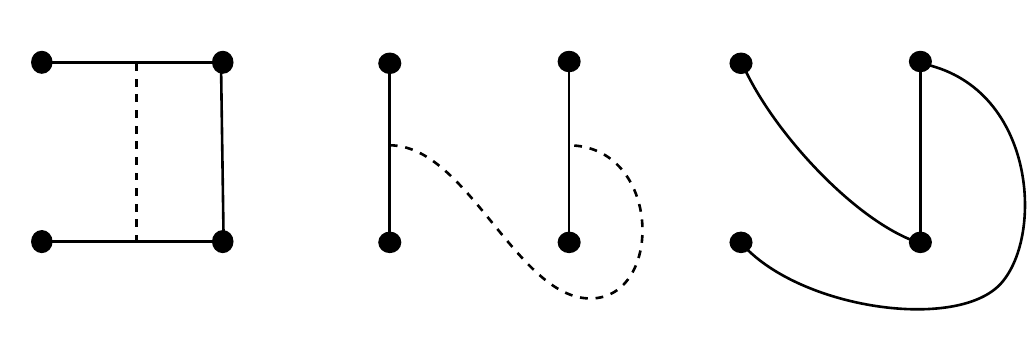
    \caption{No orientability of $AD$-edge-path containing both $A_0$ and $A_1$ edges. }
    \label{fig:lemma-noA1A0-caseAD}
\end{figure}

\emph{Case 3: $\gamma$ is an $AB$-edge-path in $D_t$.} A similar phenomenon to the previous case happens here.  In fact, we get the same picture as in Fig. \ref{fig:lemma-noA1A0-caseAD}. The reason is that the $AB-$edge-paths come in blocks of the form $ABBA$  where the two $A$'s are of the same type. So, if we have two $A$'s of a different type on $\gamma$, there must be two consecutive blocks with different $A$-types.

As consequence of lemma \ref{lemma:sillas}b, all the $\mu-1$ arcs in the first and last level in Fig. \ref{fig:lemma-noA1A0-caseAD} need to be cancel in pairs. And again it becomes impossible to give a coherent orientation satisfying these conditions.

\end{proof}

\begin{remark}
\label{abes}
When an $AB-$edge-path happens it must be of the form $ABBA ABBA \dots ABBA$, where the $A-$ and $B-$type edges lie in different polygons, see Fig. \ref{fig:Dt}. Since the surfaces considered in this work are connected, an $AB-$edge-path consists of at least two $ABBA$ blocks.  
\end{remark}

\begin{remark}
\label{Dcero}
In the case that $S\subset S^3-L_{\beta/\alpha}$ has meridional boundary components in $K_1$ and one boundary component parallel in $K_2$, then the edge-path corresponding to the branched surface that carries $S$ belong to the diagram $D_0$. Thus it is an $BD-$edge-path. For $B-$type edges to exist and to obtain an orientable surface it must happen that $\rho$ greater than $1$.
See Figures \ref{fig:b-d0} and \ref{fig:d-d0} of $B-$ and $D-$ type saddle for $t=0$. We conclude that in this case, the edge-path consists only of $D-$type edges.
\end{remark}

\begin{figure}[ht]
\centering
    \begin{subfigure}[b]{0.5\textwidth}
        \includegraphics[width=\textwidth]{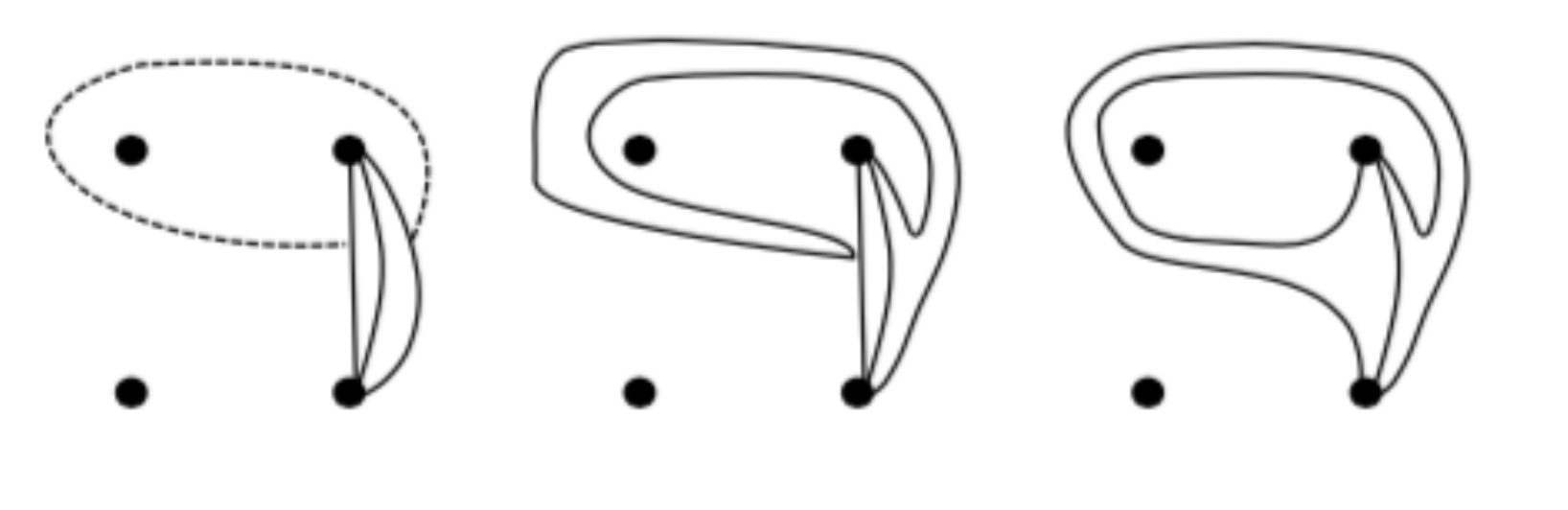}
        \caption{B-type saddle in $D_0$}
        \label{fig:b-d0}
    \end{subfigure}
    \vfill
   
    \begin{subfigure}[b]{0.2\textwidth}
        \includegraphics[width=\textwidth]{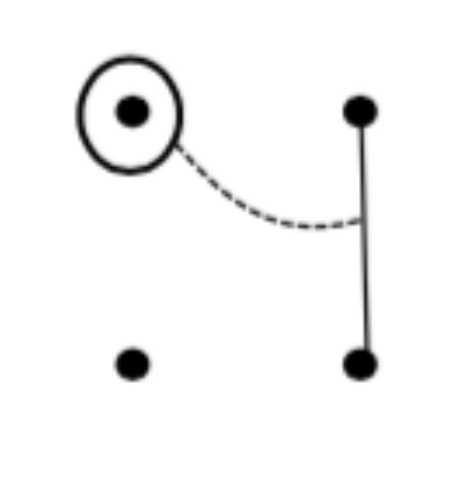}
        \caption{D-type saddle in $D_0$}
        \label{fig:d-d0}
    \end{subfigure}
 \end{figure}

\subsection{Continued fractions and genus of surfaces}
\label{fractions and genus}
\label{continuedfrac}

 We recall from Hatcher and Thurston \cite{HT}:
 an edge-path from $1/0$ to $\beta/\alpha$ in the diagram $D_1$ corresponds uniquely to a continued fraction expansion $\beta/\alpha=[r; b_1,..., b_k]$, where the partial sums $\beta_i/\alpha_i=[r; b_1,..., b_i]$ are the successive vertices of the edge-path. 
 \begin{equation*}
     \frac{\beta}{\alpha}= r+ \cfrac{1}{b_1+\cfrac{1}{b_2+\cfrac{1}{\ddots + \cfrac{1}{b_k}}}}
 \end{equation*}

 \begin{remark}
 \label{girosizqder}
 At the vertex $\beta_i/\alpha_i$ the path turns left or right across $\vert b_i \vert$ triangles.
 For $i-$odd, right if $b_i>0$ and left if $b_i<0$. For $i-$even left if $b_i>0$ and right if $b_i<0$. The number of $C$ diagonals is $\vert b_i \vert/2$
 \end{remark}

 By Remark \ref{Dtcolapsado}, in $D_t$ the diagonals $C$ of the diagram $D_1$ are changed by inscribed rectangles. So for each diagonal $C$ we obtain a $D-$edge around the vertex $\beta_i/\alpha_i$, see Fig \ref{fig:D1}. Thus the number of $D-$edges around $\beta_i/\alpha_i$ is $\vert b_i \vert /2$.

 In this paper we use two special types of continued fraction expansions: 
$\beta/\alpha=[0;2n_1,2n_2,\dots,2n_j]$ and $\beta/\alpha=[1;2m_1,2m_2,\dots,2m_i]$. These are the unique continued fraction where each entry is an even number and $j,i$ are odd.

We will described the edge-path in $D_t$ associated to these continued fractions, such that the branched surface associated carries a connected, compact, essential and orientable surface $S\subset S^3- L_{\beta/\alpha}$ with one boundary component parallel to $K_2$ and $n$-boundary components in $K_1$, which are non-meridional and $n\neq 0$; i.e, $\rho=1$ and $\mu$ is a multiple of $n$. For now on we assume that $t\neq 0, \infty$.

For short we will say that the surface $S$ is associated to the edge-path. We will compute the genus of $S$ as well.

For both continued fraction expansions, the vertices $\beta_i/\alpha_i$, given by the partial sums, satisfy that $\alpha_{2k+1}$ is even and $\alpha_{2k}$ is odd. 

In the diagram $D_1$, the edge-path for $[0;2n_1,2n_2,\dots,2n_j]$ passes by $0/1$; and the edge-path for $[1;2m_1,2m_2,\dots,2m_i]$  passes by $1/1$. These are $A-$edge-paths.

The edge-path corresponding to the continued fraction $[0;2n_1,2n_2,\dots,2n_j]$ is an $A_0-$edge-path, and the corresponding to the continued fraction $[1;2m_1,2m_2,\dots,2m_i]$ is a  $A_1-$edge-path.

If $\mu=1$, then the edge-path just obtained; is the one that corresponds to $S$. Hence we obtain an edge-path of length $j+1$ ( or $i+1$), where each edge lies in different triangles by construction. For each $A-$type edge we have an $A-$type saddle, thus we can compute the genus of $S$ using Euler characteristic.

\begin{proposition}
\label{genusA}
Let $[r; 2r_1,..., 2r_k]$ be one of two the continued fractions expansions for $\beta/ \alpha$. If $\mu=1$, the associated $A-$edge-path consisting of $k+1$ edges; corresponds to a connected, compact, essential and orientable surface $S\subset S^3- L_{\beta/\alpha}$ with one boundary component parallel to $K_i$ for $i=1,2$. Then the genus of $S$ is 
\begin{equation*}
\frac{1}{2}(k-1)
\end{equation*}
\end{proposition}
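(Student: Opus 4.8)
The plan is to obtain the genus from the Euler characteristic, which I would read off the level-arc structure of $S$. First I would fix the two numerical inputs forced by the hypotheses. Since $\mu=\rho=1$, the surface $S$ has exactly two boundary curves, one parallel to $K_1$ and one parallel to $K_2$; hence $\chi(S)=2-2g(S)-2=-2g(S)$, and it suffices to prove $\chi(S)=1-k$. At the initial level the slope is $\lambda_0=1/0$, and since $\mu=\rho=1$ each of the four punctures of $S^2_\epsilon$ carries a single arc-endpoint of $S$ (one of the resulting arcs joining the two punctures of $K_2$, as in the proof of Lemma \ref{lemma:no-c-type}); thus $M_0:=S\cap S^2_\epsilon$ consists of exactly two arcs, so $\chi(M_0)=2$.

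Next I would view $\Sigma:=S\cap(S^2\times I)$ as the cobordism swept out by the height function. Between consecutive critical levels $\Sigma$ is a product on its arcs, and at each critical level a single essential saddle occurs; by the bijection between edges and saddles there are exactly $k+1$ of them, all of type $A$. Each essential saddle is the attachment of one $1$-handle and so lowers the Euler characteristic by one. Therefore $\Sigma$ is homotopy equivalent to $M_0$ with $k+1$ one-handles attached, giving $\chi(\Sigma)=\chi(M_0)-(k+1)=2-(k+1)=1-k$.

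Finally I would recover $S$ from $\Sigma$ by capping the level-$0$ and level-$1$ arcs with disks in the two complementary balls of $S^2\times I$ in $S^3$. Each cap is a disk glued to $\Sigma$ along a single boundary arc, and since $\chi(X\cup_\alpha D)=\chi(X)+\chi(D)-\chi(\alpha)=\chi(X)+1-1=\chi(X)$ for a disk $D$ meeting $X$ in an arc $\alpha$, capping does not change the Euler characteristic. Hence $\chi(S)=\chi(\Sigma)=1-k$, and combining with the first step yields $g(S)=-\frac12\chi(S)=\frac12(k-1)$.

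The step I expect to be the main obstacle is the middle one: making the identity $\chi(\Sigma)=\chi(M_0)-\#\{\text{saddles}\}$ rigorous. This requires knowing that each $A$-edge produces exactly one saddle and no hidden extrema, and that the normal form $(1)$--$(3)$ of Section \ref{sectionpathsandsaddles} keeps $\partial S$ transverse to the levels so that the height function contributes no boundary critical points to the handle count. These facts, together with $\mu=\rho=1$, are precisely what legitimize treating each essential saddle as a single $1$-handle.
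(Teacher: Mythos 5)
Your proposal is correct and follows exactly the route the paper intends: the paper's own ``proof'' is just the remark preceding the proposition that each of the $k+1$ $A$-type edges contributes one essential saddle and that the genus then follows from an Euler characteristic count, which is precisely the computation $\chi(S)=\chi(M_0)-(k+1)=1-k$ and $g=-\chi(S)/2$ that you carry out. You have simply supplied the details (two arcs at the initial level, one $1$-handle per saddle, capping disks not changing $\chi$, two boundary components) that the paper leaves implicit.
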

\qed

If $\mu \neq 1$, we pass to the $D_t$ diagram with $t \neq 1$. Each edge $A$ in $D_1$ is changed into an  $A-$edge and a $B-$edge. The edge path in $D_1$ is transformed into an $AB$-edge-path in a diagram $D_t$. Around a vertex with even denominator there are only $A-$type edges, and around a vertex with odd denominator there are only $B-$type edges. Thus the pattern $ABBA$ is repeated $\frac{1}{2}(i+1)$-times (or $\frac{1}{2}(j+1)$-times). 

Observe that an $AB-$edge-path obtained as above may not correspond to a minimal edge-path in $D_t$; nevertheless a minimal $AB-$edge-path associated to a connected, compact, essential and orientable surface is in correspondence with an $A_i-$edge-path with $i=0,1$. A condition on the continued fraction expansion $[r; 2r_1,..., 2r_k]$ for $\beta/ \alpha$  for an $AB-$edge-path to be minimal is that $\vert r_j \vert$ greater than 1 for all $j$. 

If an orientable surface $S$ is carried by this kind of path, Lemma \ref{lemma:sillas} implies $\mu=n$ and by Remark \ref{abes} we have $\frac{1}{2}(i+1)\geq 2$ (or $\frac{1}{2}(j+1)\geq 2$; since we require a connected surface, where $i,j$ are the lengths of the continued fraction expansions for $\beta/\alpha$.
Hence an $AB$-edge-path that passes trough the vertices $0/1$ or $1/1$ associated to an orientable surfaces must contained at least two blocks of the pattern $ABBA$, thus the continued fraction expansion must contain at least three even terms, after the 0 or 1 entries. 

In order to compute the genus of $S$, the associated surface to this edge-path, we count the number of saddles corresponding to the edge-path. Observe that each $A-$type edge corresponds to one saddle and each $B-$type edge to $\frac{1}{2}(n-1)$-saddles. Each block of $ABBA$ contributes with $(n+1)-$saddles. Again, using Euler characteristic we find:

\begin{proposition}
\label{genusAB}
Let $[r; 2r_1,..., 2r_k]$ be one of two the continued fractions expansions for $\beta/ \alpha$, with $k\geq 3$ and $\vert r_t \vert \geq 2$ for all $t$. If $\mu=n$ and the associated $AB$-edge-path; consisting of $\frac{1}{2}(k-1)$ $ABBA$ blocks; corresponds to a connected, compact, essential and orientable surface $S\subset S^3- L_{\beta/\alpha}$ with one boundary component parallel to $K_2$ and $n$-boundary components parallel to $K_1$. Then the genus of $S$ is:
\begin{equation*}
 1+\frac{(n+1)(k-3)}{4}
\end{equation*}
\end{proposition}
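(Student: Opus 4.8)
The plan is to reduce the computation to an Euler-characteristic count: once the number of boundary components $b$ of $S$ and the number $N$ of essential saddles carried by the edge-path are known, the genus is forced by $\chi(S)=2-2g-b$.

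First I would fix the boundary data. Since $\rho=1$, the surface meets $K_2$ in a single curve; and since the hypotheses place us in the $AB$ case, Lemma \ref{lemma:sillas}(1) gives $\mu=n$ with all $n$ boundary curves on $K_1$ longitudinal. Hence $S$ has exactly $b=n+1$ boundary components. Next I would assemble the saddle count from the edge-path. By Remark \ref{abes} the path has the form $ABBA\cdots ABBA$, and since the $D_1$ edge-path determined by $[r;2r_1,\dots,2r_k]$ has $k+1$ edges, each of which splits upon passing to $D_t$, the path contains $\tfrac12(k+1)$ blocks (equivalently, one block centered at each odd-denominator interior vertex). By Lemma \ref{lemma:saddles-come-in-groups} each $A$-edge carries one saddle and each $B$-edge carries $\tfrac12(n-1)$ saddles, so each $ABBA$ block contributes $2+(n-1)=n+1$ saddles and the total is $N=\tfrac12(k+1)(n+1)$.

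The heart of the argument is the identity $\chi(S)=b-N$. By the Floyd--Hatcher normal form the height function on $S$ has no interior extrema, only the $N$ essential saddles, each a $1$-handle lowering $\chi$ by one; thus $\chi(S)$ equals the contribution of the two end caps (where $S$ closes off past $S^2\times\{0\}$ and $S^2\times\{1\}$) minus $N$. I would then show that these caps together contribute exactly $b$. This is consistent with the already-established Proposition \ref{genusA}, where $b=2$ and $N=k+1$ give $\chi=-(k-1)$, i.e. $\chi=b-N$; the present statement is the natural generalization to $\mu=n$ sheets. Granting the identity, $2-2g-(n+1)=(n+1)-N$ gives $g=1-(n+1)+\tfrac{N}{2}$, and substituting $N=\tfrac12(k+1)(n+1)$ yields $g=1+\tfrac{(n+1)(k-3)}{4}$, as claimed.

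The main obstacle is precisely verifying that the top and bottom caps contribute $b=n+1$ rather than some other value; the saddles themselves are routine to enumerate. Here the earlier lemmas are essential: orientability forces all $A$-edges to be of a single type $A_i$, so the $n$ sheets thread the two ends coherently rather than reconnecting, and the longitudinal conclusion of Lemma \ref{lemma:sillas}(1) pins down how the $n$ sheets on $K_1$ and the single sheet on $K_2$ cap off at $r=0$ and $r=1$. One must also confirm that the normal form produces no innermost circles in the level spheres, so that only arcs are counted. Once the cap contribution $b$ is established, the remaining work is the routine substitution above.
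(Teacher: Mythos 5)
Your proposal is correct and takes essentially the same route as the paper: one saddle per $A$-edge and $\tfrac12(n-1)$ per $B$-edge, so each $ABBA$ block contributes $n+1$ saddles, and the genus then follows from the Euler-characteristic identity $\chi(S)=b-N$ with $b=n+1$ boundary components. One remark: your block count $\tfrac12(k+1)$ disagrees with the ``$\tfrac12(k-1)$ $ABBA$ blocks'' written in the proposition's statement, but it agrees with the paper's own preceding discussion (the pattern $ABBA$ is repeated $\tfrac12(k+1)$ times) and is the count needed to produce the stated genus formula, so you have in effect corrected a typo in the statement rather than introduced an error.
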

\qed

If $S$ is oriented and $\mu\neq n$ then the edge-path for $S$ is an $AD$-edge-path.
In this case, we substitute each pair $BB$ in the above edge-path by a sequence $DD...D$, where the number of $D$'s is given by the number of diagonals $C$ in the diagram $D_1$ around the corresponding vertex. For instance, if $\beta_{2k}/\alpha_{2k}=[0;2n_1,2n_2,\dots,2n_{2k}]$, the number of $D$'s is $\vert n_{2k+1}\vert$.

Summarizing, the $AD-$edge-type in $D_t$ associated to the continued fraction expansion  $[0;2n_1,2n_2,\dots,2n_j]$ is \\
$A\underbrace{DD...D}_{\vert n_1\vert}AA\underbrace{DD...D}_{\vert n_3\vert}AA...AA\underbrace{DD...D}_{\vert n_{j}\vert}A$. Notice that the two consecutive $A-$type edges belong to different triangles, and the $D-$type edges belong to different quadrilaterals by construction. Thus we obtain a minimal edge-path. 
Analogously, for the continued fraction expansion $[1;2m_1,2m_2,\dots,2m_i]$ we associate an $AD-$edge-path.

Next we compute the genus of such $S$.

\begin{proposition}
\label{genusAD}
Let $[r; 2r_1,..., 2r_k]$ be one of two the continued fractions expansions for $\beta/ \alpha$. If $\mu \neq n$ and the associated path 
$A\underbrace{DD...D}_{\vert r_1\vert}AA\underbrace{DD...D}_{\vert r_3\vert}AA...AA\underbrace{DD...D}_{\vert r_{k}\vert}A$ corresponds to a connected, compact, essential and orientable surface $S\subset S^3- L_{\beta/\alpha}$ with one boundary component parallel to $K_2$ and $n$-boundary components non-meridional on $K_1$. Then the genus of $S$ is:
\begin{equation*} 
\frac{1}{2} \Bigg[\displaystyle{\left(-1 + \sum_{ h \; odd}|r_h|\right) } \; (|\mu| - 1) + (k + 1) - (n+1)\Bigg]
\end{equation*} 
where $h\in \{1, ..., k\}$
\end{proposition}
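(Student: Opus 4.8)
The plan is to compute the Euler characteristic $\chi(S)$ directly from the combinatorics of the edge-path and then recover the genus from $\chi(S) = 2 - 2g - b$, where $b$ is the number of boundary components of $S$. Since $S$ has $n$ boundary curves on $K_1$ and one on $K_2$, we have $b = n+1$, so it suffices to pin down $\chi(S)$.

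First I would count saddles by type. In the displayed path $A\,D^{|r_1|}\,AA\,D^{|r_3|}\,AA\cdots AA\,D^{|r_k|}\,A$ there is one block of $D$'s for each odd index $h\in\{1,3,\dots,k\}$, hence $(k+1)/2$ blocks; counting the single $A$ at each end together with the pairs $AA$ between consecutive blocks gives exactly $k+1$ edges of type $A$ and $\sum_{h\text{ odd}}|r_h|$ edges of type $D$. Using the bijective correspondence between edges and saddles together with Lemma \ref{lemma:saddles-come-in-groups}(2), each $D$-edge carries a group of $\mu-1$ saddles while each $A$-edge carries a single saddle, so the total number of saddles is $N = (k+1) + (\mu-1)\sum_{h\text{ odd}}|r_h|$.

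Next I would turn the saddle count into $\chi(S)$ via the handle decomposition coming from the height function $S\cap(S^2\times I)\to I$: the $N$ saddles are $1$-handles, each contributing $-1$, while the caps of $S$ at the extremal levels (the pieces of $S$ near the bridges of $L$ at $r=0$ and $r=1$) are the $0$- and $2$-handles, each contributing $+1$. The real content is counting these caps. Here I would invoke Lemma \ref{lemma:sillas}(2): in an $AD$-edge-path the $\mu$ sheets running along $K_1$ all carry the same orientation (indeed $|\Sigma_v| = \mu$), so near the bridges they cap off individually rather than tubing together; tallying these against the single sheet along $K_2$ ($\rho=1$) gives a total cap contribution of $\mu+1$, whence $\chi(S) = (\mu+1) - N$. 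The $\mu=1$ specialization of the resulting formula recovers Proposition \ref{genusA}, which is a useful consistency check.

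The hard part will be this last step: justifying precisely that the extremal behavior contributes $+(\mu+1)$ and nothing else. This requires a careful analysis of the arcs of $S\cap S^2_\epsilon$ just above the bottom bridge (and symmetrically at the top), showing that the coherent orientation of the $\mu$ sheets forces each to bound its own cap; the contrast with the $AB$-case — where the opposite orientations of Lemma \ref{lemma:sillas}(1) let the sheets tube together and the cap contribution drops, distinguishing this formula from that of Proposition \ref{genusAB} — is exactly the subtlety to get right. Once $\chi(S) = (\mu+1) - N$ is established, the rest is routine: substituting into $2g = 2 - b - \chi(S) = 2 - (n+1) - (\mu+1) + N$ and expanding $N$ gives
\[
2g = \left(-1 + \sum_{h\text{ odd}}|r_h|\right)(\mu - 1) + (k+1) - (n+1),
\]
which is the claimed genus after dividing by $2$.
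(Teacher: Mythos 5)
Your proposal is correct and is essentially the paper's own proof, which is just the one-line instruction to count saddles (one per $A$-edge, $\mu-1$ per $D$-edge) and apply Euler characteristic; your write-up supplies the missing details, namely the identity $\chi(S)=(\mu+1)-N$ with $N=(k+1)+(\mu-1)\sum_{h\,\mathrm{odd}}|r_h|$, and the $\mu=1$ consistency check against Proposition \ref{genusA} is a good sanity test. One caveat about your closing aside: in the $AB$-case the cap contribution is likewise $\mu+1=n+1$ (this is exactly what makes the formula of Proposition \ref{genusAB} come out of $\chi=(n+1)-N$ with $(k+1)/2$ blocks of $n+1$ saddles each), so the difference between the two genus formulas comes entirely from the saddle counts and not from any ``drop'' in the cap contribution.
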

\begin{proof}

Use Euler characteristic, considering that each $A-$type edge corresponds to one saddle and each $D-$type edge corresponds to $\mu-1$ saddles.
\end{proof}

\subsection{Boundary slopes}
The boundary of a branched surface derived from the Hatcher-Floyd construction defines a train track on the boundary of the regular neighborhood of the link. Thus the boundary of any essential surface $S$ carried by the branched surface is carried by this train track. Lash, \cite{L}, calculated the space of boundary slopes for the Whitehead link.

In the following paragraph we explain Lash algorithm. We base the explanation on the article \cite{GHS}:

To compute the boundary slopes of the surfaces the frame used consists of the meridian $\mu_i$ and a non-standard longitude $\lambda_i$ of $K_i$. In $S^2=\mathbb{R}^2/\Gamma$, we take the arc $s$ of slope $0$ connecting $\Gamma(0,0)$ and $\Gamma(0,1)$. $\lambda_1$ is the union of the arc $(s \times [0,1]) \cap \bd N(K_1)$ and an arc in $(S^2 \times [1, \infty)) \cap \partial N(K_1)$. $\lambda_1$ is oriented toward increasing $r \in [0,1]$ along the axis $\Gamma(0,0)\times [0,1]$. The meridian $\mu_1$ is oriented as a right-handed circle around the axis $\Gamma(0,0)\times [0,1]$ oriented upward. We obtain $\lambda_2, \mu_2$ from $\lambda_1, \mu_1$ by \(\)rotating by $180^\circ$ about the axis $\Gamma(1/2, 1/2) \times [0,1]$.

Let $i_j$ be the algebraic intersection number $\partial S\cdot \lambda_j$ in $\partial N(K_j)$.
Let $\varphi$ be the map such that for $s\in [0,1]$, $\varphi(s)=(i+s-1)/k \in [(i-1)/k,i/k]$. For $0\leq t < 1$, $\partial \Sigma_{E_i}=\partial (g\times \varphi)(\Sigma_{e_0})$, $g= \left( \begin{array}{cc} a & b \\ c & d \end{array} \right)\in G$ contributes to the number $i_j$ as in Table \ref{tabla1}, if the orientations of $e_i$ and $g(e_0)$ agree. If they disagree, we change all the sign of the number in Table \ref{tabla1}.

We calculate the boundary slope of a surface $S$ corresponding to an $AD$-edge-path. 

Taking the sum of the entries of the row of $i_1$ and $i_2$ of Table \ref{tabla1}, we can see that the slope on $\partial N(K_1)$ is $(\mu, r(\mu-\rho)+s\rho)$ and that on $\partial N(K_2)$ is $(\rho, v(\mu-\rho)+s\rho)$, where the first coordinate is the longitudinal entry, and the second coordinate is the meridional entry with respect to the unusual longitude $\lambda_i$. To obtain the real slope, we need to know the slope of the preferred longitude, which is obtained by substituting $1$ for $\mu$ and $0$ for $\rho$, in $\partial N(K_1)$, which is $(1, r)$. The preferred longitude of $K_2$ is of slope $(1, s-v)$, which is obtained by substituting $\rho=1$ and $\mu= 0$ in $\partial N(K_2)$. But the preferred longitude of $K_2$ is the same for $K_1$, recall that we take $\lambda_2$ as the image of $\lambda_1$ by rotating $180^\circ$ about the axis $\Gamma(1/2, 1/2) \times [0,1]$.  Thus, $(1, r)= (1, s-v)$ and $s-r=v$. The slopes with respect to the preferred longitude can be obtained from  $(\mu, r(\mu-\rho)+s\rho- r\mu)=(\mu,(s-r)\rho)=(\mu, v\rho)$ on $\partial N(K_1)$, and  $(\rho, v(\mu-\rho)+s\rho-(s-v)\rho)=(\rho, v\mu)$ on $\partial N(K_2)$.

\begin{table}[ht]
\begin{tabular}{ |c|c|c|c| }
\hline
Label & condition on $-d/c$ & $i_1$ & $i_2$\\ \hline
\multirow{3}{*}{A} & $-\infty < -\frac{d}{c}< 0$ & $\rho$ & $\rho$ \\ \cline{2-4}
 & $0< -\frac{d}{c}< \infty$ & $-\rho$ & $-\rho$ \\ \cline{2-4}
 & $-\frac{d}{c}=0, \pm \infty$ & $0$ & $0$ \\ \hline
 
 \multirow{3}{*}{B} & $-\infty < -\frac{d}{c}< 0$ & $-(\mu-\rho)$ & 0  \\ \cline{2-4}
 & $0< -\frac{d}{c}< \infty$ & $\mu-\rho$ & 0 \\ \cline{2-4}
 & $-\frac{d}{c}=0, \pm \infty$ & $0$ & $0$ \\ \hline
 
 \multirow{3}{*}{C} & $ 0 < -\frac{d}{c}< 1$ & $-2\rho$ & 0  \\ \cline{2-4}
 & $ -\frac{d}{c}=0,1$ & $-\rho$ & $\rho$ \\ \cline{2-4}
 & otherwise & $0$ & $2\rho$ \\ \hline

\multirow{3}{*}{D} & $\frac{1}{2}< -\frac{d}{c}< \infty$ & $\mu-\rho$ & $\mu-\rho$ \\ \cline{2-4}
 & $-\frac{d}{c}=\frac{1}{2}, \pm \infty$  & $0$ & $\mu-\rho$ \\ \cline{2-4}
 & otherwise & $-(\mu-\rho)$ & $\mu-\rho$ \\ \hline
 \end{tabular}
\caption{\label{tabla1}}
\end{table}

Recall that we are considering two types of continued fraction expansions for $\beta/\alpha$, namely $F_0=[0;2n_1,2n_2,\dots,2n_j]$ and $F_1=[1;2m_1,2m_2,\dots,2m_i]$. As discussed in Section \ref{continuedfrac}, for each continued fraction there is an $AD$-edge-path corresponding to an essential surface. We will determined the contribution of $v$. 

The edge path for $F_0$ is $A\underbrace{DD...D}_{\vert n_1\vert}AA\underbrace{DD...D}_{\vert n_3\vert }AA...AA\underbrace{DD...D}_{\vert n_{j}\vert}A$, the orientations of the edges $A$ and $D$ need to be determined in order to compute $v$. If the orientation of $e_i \in \{ A, D\}$ and $g(e_0)$ with $e_0\in \{ A_0, D_0\}$ agree we denote the edge by $\overrightarrow{e_i}$, if they disagree we denote it by $\overleftarrow{e_i}$.

By the construction of the edge path is not hard to see that, see Figure \ref{fig:ADpath1}:
\begin{enumerate}
\item The first $A-$type edge is an $\overrightarrow{A}$. 
\item The first $\vert n_1\vert$ $D-$type edges are $\overleftarrow{D}$. 
\item Each intermediate pair $AA$ is of the form $\overleftarrow{A}\ \overrightarrow{A}$.
\item The last $A-$type edge is $\overleftarrow{A}$
\end{enumerate}

\begin{figure}[ht]

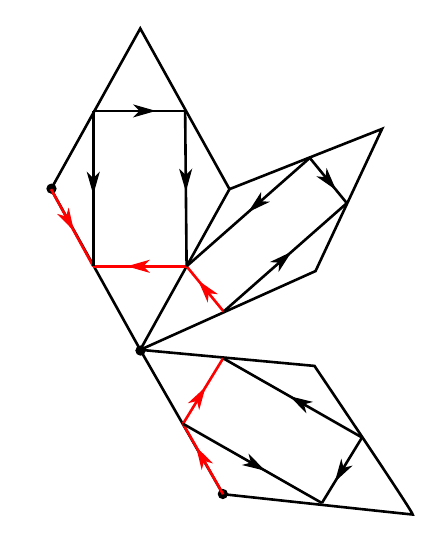
\caption{Path $\ora{A}\ola{D}\ola{D}\dots\ola{D}\protect\ola{A}$ }
\label{fig:ADpath1}
\end{figure}

For the remaining $D-$type edges we have:

\begin{proposition} 
For the continued fraction expansion $F_0$ and $i$ odd.
\begin{enumerate}
\item If $n_i > 0$ then the sequence of $D-$type edges are $\overleftarrow{D}$ .
\item If $n_i < 0$ then the sequence of $D-$type edges are $\overrightarrow{D}$.
\end{enumerate}
\end{proposition}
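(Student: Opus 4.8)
The plan is to deduce the two labels $\ola{D}$ and $\ora{D}$ from the turning rule of Remark~\ref{girosizqder} together with the base case already visible in Figure~\ref{fig:ADpath1}. Recall first that the block of $|n_i|$ consecutive $D$-type edges ($i$ odd) is exactly the fan of $D$-edges that the path crosses as it turns across $|2n_i|$ triangles at the vertex $\beta_{i-1}/\alpha_{i-1}$, which has odd denominator since $i-1$ is even; by Remark~\ref{girosizqder} this turn is to the right when $n_i>0$ and to the left when $n_i<0$. Each edge of the block is $g(D_0)$ for a unique $g\in G$, and its label records whether the orientation in which the path traverses it (from $1/0$ toward $\beta/\alpha$) agrees ($\ora{D}$) or disagrees ($\ola{D}$) with the standard orientation that $g$ transports from $D_0$ in Figure~\ref{fig:Dt}. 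Thus the proposition reduces to comparing, at each edge of a block, the sweep direction with the transported standard orientation, and to matching the outcome with the sign of $n_i$.

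The first step is to show that the label is constant along a single block. The quadrilaterals meeting at the cusp $\beta_{i-1}/\alpha_{i-1}$ are permuted cyclically by the parabolic stabilizer of that cusp inside $G$; let $h\in G$ be its generator, so that $h$ carries each $D$-edge of the fan to the next. Because $\alpha_{i-1}$ is odd, this stabilizer is genuinely contained in $G$: the generator of the full $PSL_2(\mathbb{Z})$-stabilizer has odd lower-left entry and is excluded, while its square has even lower-left entry and is retained. Since $h$ is orientation-preserving on $\mathbb{H}$ and commutes with the sweep, it sends each oriented $D$-edge of the block, carrying both its path-traversal direction and its transported standard orientation, to the next one, preserving agreement versus disagreement; hence the label is the same for all $|n_i|$ edges of the block, and it suffices to fix it on one representative edge in each of the two cases.

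The second step fixes that label. Every odd-denominator cusp is $G$-equivalent to $0/1$, and $G$ preserves the chirality of a turn because its elements are orientation-preserving, so after adjusting by a power of $h$ every right-turning block is carried, by an orientation-preserving element of $G$, to the first block of Figure~\ref{fig:ADpath1}, where the $D$-edges were already identified as $\ola{D}$. Transport by that element preserves the label, so the case $n_i>0$ consists of $\ola{D}$ edges. For $n_i<0$ the turn is to the left; applying the orientation-reversing symmetry $z\mapsto 1-\bar z$, which normalizes $G$, preserves the quadrilateral $\langle 1/0,0/1,1/2,1/1\rangle$, and interchanges left and right turns, one obtains from the right-turning base case a left-turning block whose edges are labelled $\ora{D}$. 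This yields the two cases of the statement.

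The step I expect to be the main obstacle is the orientation bookkeeping under these symmetries. Concretely, one must verify that $h$ transports the standard $D_0$-orientation to the standard $D_0$-orientation, rather than reversing it through the action on directed edges, and that the reflection $z\mapsto 1-\bar z$ keeps the edge inside the $D$-orbit while reversing its orientation, so that it genuinely swaps $\ola{D}$ with $\ora{D}$ and does not instead carry a $D$-edge onto an edge of another type. Both points reduce to a single explicit check on the standard edges $A_0$ and $D_0$ of Figure~\ref{fig:Dt}; once that check is made, the left/right dichotomy of Remark~\ref{girosizqder} converts verbatim into the $\ola{D}$/$\ora{D}$ dichotomy asserted in the proposition.
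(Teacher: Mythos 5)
Your overall strategy coincides with the paper's: identify the block of $|n_i|$ $D$-type edges with the fan of quadrilaterals at an odd-denominator vertex, use Remark~\ref{girosizqder} to translate the sign of $n_i$ into the chirality of the turn, and transport everything by $G$ to the standard picture of Figure~\ref{fig:ADpath1}. You add one genuine improvement: the explicit argument that the label is constant along a single block, via the generator $h$ of the parabolic stabilizer of the cusp inside $G$ (index two in the full $PSL_2(\mathbb{Z})$-stabilizer, as you note). The paper only asserts this constancy by pointing at the figures, so that part of your argument is correct and worth keeping.

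Where you diverge from the paper is the left-turning case, and this is where the gap sits. The paper settles $n_i<0$ by direct inspection (Figure~\ref{fig:giroizq}); you instead invoke the reflection $z\mapsto 1-\bar z$. Two problems. First, that reflection preserves $Q$ only setwise and swaps $0/1$ with $1/1$, so it carries the right-turning fan at $0/1$ to a left-turning fan at a vertex congruent to $1/1$ mod $2$ --- the configuration belonging to $F_1$, not $F_0$ --- and you would still need to compose with the half-turn of $Q$ (which does lie in $G$) to return to a fan at a vertex congruent to $0/1$. Second, and more seriously, the entire content of the proposition is which of $\ola{D}$, $\ora{D}$ goes with which sign, and that is decided precisely by the orientation check you explicitly defer: whether the reflection carries the transported standard orientation of $D_0$ to the standard or to the reversed orientation of its image. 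A priori it could preserve it, in which case your argument would ``prove'' that both signs give $\ola{D}$; so the check cannot be omitted. It does come out as you claim, but a cleaner route is already available from your first step: $h$ propagates the standard orientations of the $D$-edges coherently around the cusp, while the traversal direction of the fan runs one way around the cusp for a right turn and the opposite way for a left turn; hence the two chiralities necessarily receive opposite labels, and the base case of Figure~\ref{fig:ADpath1} pins down which is which. This closes the gap without any reflection.
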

\begin{proof}
For both cases we need to verify that the agreement or disagreement of the $D-$types edges with $g(D_0)$ at the $i-$th position for $i$ odd. Since we are considering the continued fraction $F_0$, all the vertices $\beta_i/ \alpha_i$, for $i$-odd, are congruent with $0/1$ mod 2, up to transformations of elements of $PSL(2,\mathbb{Z})$. Thus the $D-$type edge at such vertex $\beta_i/\alpha_i$ is a $\overleftarrow{D}$ edge. See Figure \ref{fig:ADpath1}.  From Remark \ref{girosizqder} the quadrilateral turns right if $n_i>0$ and left if $n_i<0$. Hence, if $n_i>0$ the sequence of $D-$type edges are $\overleftarrow{D}$ and if  $n_i < 0$  the sequence of $D-$type edges are $\overrightarrow{D}$. See Figures \ref{fig:ADpath1} and \ref{fig:giroizq} for the turns around $\beta_i/ \alpha_i$ mod 2.
\end{proof}

\begin{figure}[ht]
\includegraphics[width=5cm]{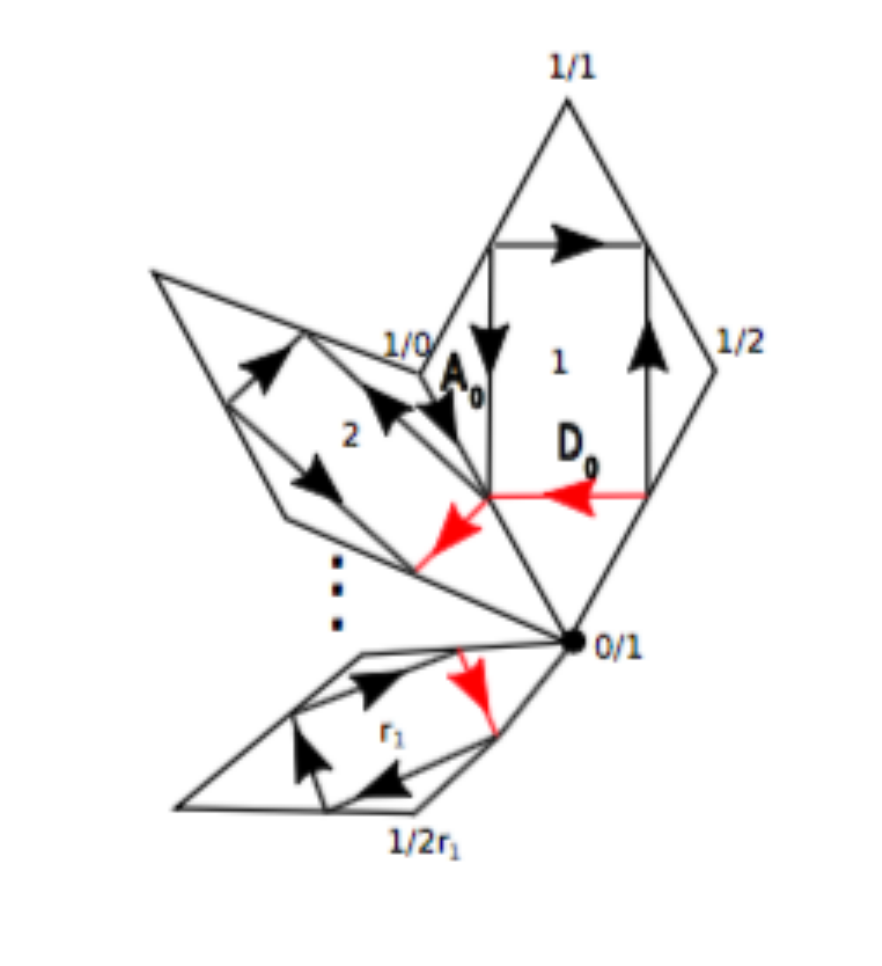}
\caption{Path $\ora{D}\ora{D}\cdots\ora{D}$ }
\label{fig:giroizq}
\end{figure}

The value of $v$ for the edge path corresponding to the continued fraction expansion $F_0$ is $v=-(n_1 + n_3 +...+n_j)$ because when $n_i > 0$ we see $\overleftarrow{D}-$type edges, so the contribution in the Table \ref{tabla1} is $-n_i$, and if $n_i < 0$ we see $\overrightarrow{D}-$type edges, so they contribute with $-n_i$ in Table \ref{tabla1}. Since $lk(K_1, K_2)= n_1 + n_3 +...+n_j$, we conclude the following:

\begin{corollary}
\label{linkingnegativo}
Let $S$ be a surface associated to the edge path \\ $A\underbrace{DD...D}_{\vert n_1\vert}AA\underbrace{DD...D}_{\vert n_3\vert }AA...AA\underbrace{DD...D}_{\vert n_{j}\vert}A$, arising from  $[0;2n_1,2n_2,\dots,2n_j]$. 
The boundary slopes of $S$ with respect to the preferred longitude on $\partial N(K_1)$ is $(\mu, -lk(K_1, K_2)\rho)$ and on $\partial N(K_2)$ is $(\rho, -lk(K_1, K_2)\mu)$.

\end{corollary}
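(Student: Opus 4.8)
The plan is to reduce the whole statement to the single scalar $v$. By the discussion in this subsection, Lash's algorithm already expresses both slopes in terms of $v$: with respect to the preferred longitude the slope on $\partial N(K_1)$ is $(\mu, v\rho)$ and the slope on $\partial N(K_2)$ is $(\rho, v\mu)$. So it suffices to show $v=-lk(K_1,K_2)$ and then substitute; nothing further about the surface $S$ is needed beyond its edge-path.

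First I would isolate the contribution of the $D$-type edges to the meridional intersection number $i_2$. Reading off the $D$-rows of Table \ref{tabla1}, an edge whose orientation agrees with $g(D_0)$ contributes $+(\mu-\rho)$ to $i_2$ in each of the three cases, while the sign-reversal convention for disagreeing orientations forces an $\overleftarrow{D}$ edge to contribute $-(\mu-\rho)$. Since the $A$-type edges produce only the $s\rho$ term and no multiple of $(\mu-\rho)$, the coefficient $v$ of $(\mu-\rho)$ is exactly the signed count $(\#\overrightarrow{D})-(\#\overleftarrow{D})$ over the whole edge-path.

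Next I would feed in the orientation bookkeeping from the Proposition immediately above, which uses Remark \ref{girosizqder} to fix the turning direction at each vertex $\beta_i/\alpha_i$ with $i$ odd. At such a vertex the block of $|n_i|$ consecutive $D$-edges is uniformly $\overleftarrow{D}$ when $n_i>0$ and uniformly $\overrightarrow{D}$ when $n_i<0$. In the first case the block contributes $-|n_i|=-n_i$ to $v$, and in the second case $+|n_i|=-n_i$; so in either sign case the block at position $i$ contributes precisely $-n_i$. Summing over the odd indices, which are the only positions carrying $D$-edges, gives $v=-(n_1+n_3+\cdots+n_j)$.

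Finally I would invoke the identity $lk(K_1,K_2)=n_1+n_3+\cdots+n_j$ for the link $L_{\beta/\alpha}$ presented by $F_0$, yielding $v=-lk(K_1,K_2)$ and hence the claimed slopes $(\mu,-lk(K_1,K_2)\rho)$ and $(\rho,-lk(K_1,K_2)\mu)$. The main obstacle is entirely bookkeeping of signs: one must track the left/right turning rule of Remark \ref{girosizqder}, the $\overrightarrow{\phantom{D}}/\overleftarrow{\phantom{D}}$ labelling of the preceding Proposition, and the signed entries of Table \ref{tabla1} \emph{simultaneously}, and verify that the two a priori different cases $n_i>0$ and $n_i<0$ genuinely collapse to the common expression $-n_i$. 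The linking-number identity is the secondary point requiring care, but it follows from the even continued fraction presentation $[0;2n_1,\dots,2n_j]$ of $\beta/\alpha$.
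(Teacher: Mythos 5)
Your proposal is correct and follows essentially the same route as the paper: reduce the claim to the single coefficient $v$ in Lash's formulas $(\mu, v\rho)$, $(\rho, v\mu)$, read the $D$-row of Table \ref{tabla1} together with the orientation Proposition to see that each block at odd position $i$ contributes $-n_i$ regardless of the sign of $n_i$, and conclude $v=-(n_1+n_3+\cdots+n_j)=-lk(K_1,K_2)$. This is exactly the computation the paper carries out in the paragraph preceding the corollary.
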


On the other hand; for the continued fraction expansion $F_1=[1;2m_1,2m_2,\dots,2m_i]$ the corresponding edge path in the diagram $D_t$ is $A\underbrace{DD...D}_{\vert m_1\vert}AA\underbrace{DD...D}_{\vert m_3\vert }AA...AA\underbrace{DD...D}_{\vert m_{i}\vert}A$. This path lies in the same sequence of quadrilaterals as the corresponding path for the continued fraction expansion $F_0$, but it is made of the $A-$ and $D-$type edges which do not belong to the path for $F_0$.
Reasoning as before, we have that for the $AD-$edge-path corresponding to $F_1$:

\begin{enumerate}
\item The first $A-$type edge is an $\overrightarrow{A}$. 
\item The first $\vert n_1\vert$ $D-$type edges are $\overrightarrow{D}$. 
\item Each intermediate pair $AA$ is of the form $\overleftarrow{A}\ \overrightarrow{A}$.
\item The last $A-$type edge is $\overleftarrow{A}$
\end{enumerate}

\begin{proposition} 
For the continued fraction expansion $F_1$ and $i$ odd.
\begin{enumerate}
\item If $n_i > 0$ then the sequence of $D-$type edges are $\overrightarrow{D}$ .
\item If $n_i < 0$ then the sequence of $D-$type edges are $\overleftarrow{D}$.
\end{enumerate}
\end{proposition}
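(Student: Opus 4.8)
The plan is to run the same argument as for $F_0$, changing only the single feature that distinguishes $F_1$: the edge-path now passes through $1/1$ instead of $0/1$ and, as observed just above, it is assembled from the $A$- and $D$-type edges of each quadrilateral \emph{complementary} to those used by the $F_0$-path. First I would record the mod-$2$ type of the fanning vertices. Computing the partial convergents $\beta_i/\alpha_i$ of $[1;2m_1,\dots]$ and reducing numerator and denominator mod $2$, exactly as in the $F_0$ case, shows that every vertex at which a block of $D$-edges is fanned (i.e.\ the vertex attached to an odd-indexed coefficient) is congruent to $1/1$ mod $2$, up to an element of $PSL(2,\mathbb{Z})$, rather than to $0/1$.

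Next I would fix the reference orientation of a $D$-edge seated at such a $1/1$-type vertex. The key observation is that the $180^\circ$ rotation generating $G$ exchanges the two diagonals of the quadrilateral $Q$, so passing from the $F_0$-edge to its complementary $F_1$-edge reverses the orientation of the edge relative to $g(D_0)$. Consequently, where the base $D$-edge at a $0/1$-vertex was an $\overleftarrow{D}$, the base $D$-edge at the corresponding $1/1$-vertex is an $\overrightarrow{D}$; this is exactly item $(2)$ in the list of first edges preceding the statement, and the rest of the local picture is unaffected.

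With this flipped reference orientation in hand, I would apply Remark~\ref{girosizqder} verbatim: the sign of the coefficient controls only the direction of fanning, the path turning right for a positive coefficient and left for a negative one, independently of whether we are working with $F_0$ or $F_1$. Combining $\overrightarrow{D}$ as the reference with this turning rule gives that a positive coefficient leaves the $D$-edges as $\overrightarrow{D}$, while a negative coefficient reverses them to $\overleftarrow{D}$ — precisely the asserted statement, with the two arrow directions exchanged relative to the $F_0$ proposition.

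I expect the second step to be the only real obstacle: rigorously certifying that replacing an edge by its complementary edge (equivalently, moving from a $0/1$- to a $1/1$-type vertex) reverses agreement with $g(D_0)$. This means matching the abstract $G$-action against the explicit pictures of Figure~\ref{fig:ADpath1} and checking consistency with the orientation conventions recorded in Table~\ref{tabla1}; the underlying combinatorics is routine, but the sign bookkeeping must be tracked carefully.
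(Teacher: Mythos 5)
Your proposal is correct and follows essentially the same route as the paper, which for this proposition offers no separate argument beyond ``reasoning as before'' --- i.e., rerunning the $F_0$ proof with the fanning vertices now congruent to $1/1$ rather than $0/1$ mod $2$, so that the reference $D$-edge orientation flips while the turning rule of Remark~\ref{girosizqder} is unchanged. Your added justification of the flip (the $F_1$-path uses the complementary $D$-edge of each quadrilateral, carried to the $F_0$-edge by the order-two rotation of $Q$ in $G$, which reverses agreement with $g(D_0)$) is exactly the content the paper leaves to its figures, and it is consistent with the subsequent computation of $v$ and Corollary~\ref{linkingpositivo}.
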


\begin{corollary}
\label{linkingpositivo}
Let $S$ be a surface associated to the edge path \\ $A\underbrace{DD...D}_{\vert m_1\vert}AA\underbrace{DD...D}_{\vert m_3\vert }AA...AA\underbrace{DD...D}_{\vert m_{i}\vert}A$, arising from  $[1;2m_1,2m_2,\dots,2m_i]$. 
The boundary slopes of $S$ with respect to the preferred longitude on $\partial N(K_1)$ is $(\mu, lk(K_1, K_2)\rho)$ and on $\partial N(K_2)$ is $(\rho, lk(K_1, K_2)\mu)$.

\end{corollary}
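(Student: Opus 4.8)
The plan is to imitate the derivation of Corollary \ref{linkingnegativo}. The general computation preceding that corollary shows, for an arbitrary $AD$-edge-path, that the boundary slopes with respect to the preferred longitudes are $(\mu, v\rho)$ on $\partial N(K_1)$ and $(\rho, v\mu)$ on $\partial N(K_2)$, where $v$ is the coefficient of $(\mu-\rho)$ in the $i_2$-sum of Table \ref{tabla1}. Since this formula is insensitive to the particular path, the entire problem reduces to evaluating $v$ for the $F_1$-path $A\underbrace{DD\cdots D}_{|m_1|}AA\cdots AA\underbrace{DD\cdots D}_{|m_i|}A$ and then recognizing the result as $lk(K_1,K_2)$.

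First I would isolate the contributions to $v$. In the $D$-row of Table \ref{tabla1} the $i_2$ entry equals $\mu-\rho$ in every subcase, while in the $A$-row the $i_2$ entry is a multiple of $\rho$; hence the $A$-type edges (the initial and final ones and the intermediate $\overleftarrow{A}\,\overrightarrow{A}$ pairs) feed only the $s\rho$ summand and contribute nothing to $v$, and it is exactly the $D$-type edges that determine $v$. By the convention of Lash's algorithm an $\overrightarrow{D}$ contributes $+(\mu-\rho)$ and an $\overleftarrow{D}$ contributes $-(\mu-\rho)$, that is $+1$ and $-1$ to $v$ respectively. Invoking the Proposition that precedes the statement, the $|m_h|$ edges in the block at an odd position $h$ are all $\overrightarrow{D}$ when $m_h>0$ and all $\overleftarrow{D}$ when $m_h<0$, so in either case that block contributes $m_h$ to $v$. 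Summing over odd $h$ gives
\[
 v \;=\; m_1+m_3+\cdots+m_i .
\]
Substituting into the general formula immediately produces the slopes $(\mu, v\rho)$ and $(\rho, v\mu)$.

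It remains to identify this $v$ with $lk(K_1,K_2)$, and this is the delicate point. One route is to fix the magnitude first: by Lemma \ref{lemma:sillas}(2) we have $|\Sigma_v|=\mu$, and then Lemma \ref{lemma:signsum-and-linking}, applied to the slope $(\mu,v\rho)$ on $\partial N(K_1)$, forces $|v|=|lk(K_1,K_2)|$. The only thing still to be settled is the sign, and here I expect the main obstacle to lie: one must pin down the sign of $v$ from the orientation data rather than from the magnitude. The structural reason the answer is $+lk(K_1,K_2)$ (rather than the $-lk(K_1,K_2)$ obtained for $F_0$) is that, as observed just before the statement, the $F_1$-path runs through the same sequence of quadrilaterals as the $F_0$-path but uses the complementary $A$- and $D$-edges; consequently every $D$-edge carries the orientation opposite to its $F_0$ counterpart, and the reversal propagates to $v$. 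Making this sign argument watertight requires careful tracking of Lash's orientation conventions along the path, after which the slopes $(\mu, lk(K_1,K_2)\rho)$ on $\partial N(K_1)$ and $(\rho, lk(K_1,K_2)\mu)$ on $\partial N(K_2)$ follow.
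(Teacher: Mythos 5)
Your computation of $v$ is exactly the paper's: the paper likewise reduces everything to the $i_2$-column of Table \ref{tabla1}, notes that only the $D$-type edges contribute to the coefficient of $(\mu-\rho)$, and uses the orientation data (first $D$-block is $\overrightarrow{D}$, the Proposition giving $\overrightarrow{D}$ for $m_h>0$ and $\overleftarrow{D}$ for $m_h<0$) to get $v=m_1+m_3+\cdots+m_i$; the paper simply says ``reasoning as before'' rather than writing this out. Where you diverge is the final identification of $v$ with $lk(K_1,K_2)$. The paper does this the same way it did for $F_0$: it invokes the identity $lk(K_1,K_2)=n_1+n_3+\cdots+n_j$ (and implicitly its $F_1$-analogue $lk(K_1,K_2)=m_1+m_3+\cdots+m_i$), so the sign is built into the identity and nothing further is needed. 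You instead route through Lemma \ref{lemma:sillas}(2) and Lemma \ref{lemma:signsum-and-linking}, which only deliver $|v|=|lk(K_1,K_2)|$, and you then try to fix the sign by the ``complementary edges'' observation. That last step is where your argument is soft, and you say so yourself: the reversal of the arrow convention between $F_0$ and $F_1$ is precisely what you already consumed in deriving $v=\sum_{h\,\mathrm{odd}}m_h$ (versus $v=-\sum_{k\,\mathrm{odd}}n_k$ for $F_0$), so re-invoking it does not by itself tell you whether $\sum_{h\,\mathrm{odd}}m_h$ equals $+lk(K_1,K_2)$ or $-lk(K_1,K_2)$ --- the two sums come from \emph{different} continued fractions of the same $\beta/\alpha$. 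The clean way to close the gap is the paper's: accept (or verify) the linking-number identity for the all-even continued fraction expansion, which you have already tacitly granted in the $F_0$ case, and the positive sign falls out with no extra orientation bookkeeping.
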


Analogously, we can compute the boundary slopes for $A$-edge-path and $AB$-edge-path, in both cases the resulting boundary slopes are equal to zero.

\section{Fiberings}
\label{fiberings}
Floyd and Hatcher give a criterion to determine when a surface $S$ in $S^3-L_{\beta/\alpha}$ is a fiber of a fibering $S^3-L_{\beta/\alpha} \rightarrow S^1$.

\begin{definition}
Let $\gamma$ be a path in $D_t$, with $t\in[0, \infty]$. A maximal sequence of consecutive $A-$ and $D-$type edges in $\gamma$ each separated from the next by only one edge in $D_t$, is called a string.

\end{definition}
Figure \ref{string} shows an example of a string and Figure \ref{nstring} a path which is not a string.

\begin{figure}
    \centering
    \begin{subfigure}[b]{0.3\textwidth}
        \includegraphics[width=\textwidth]{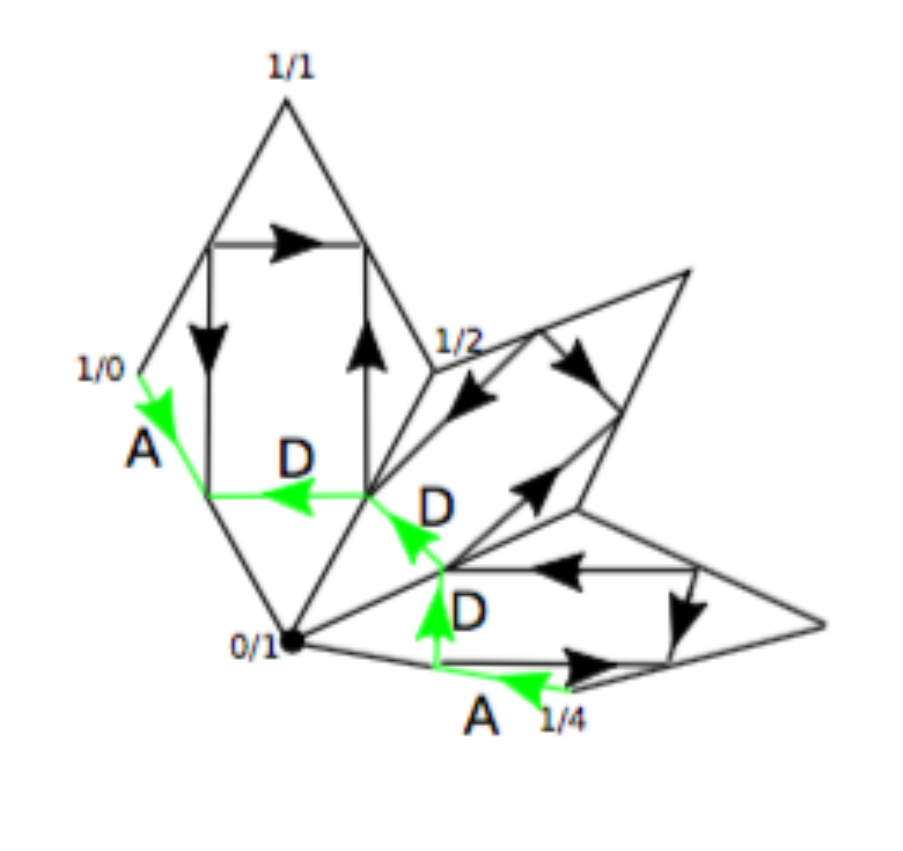}
        \caption{a string}
        \label{string}
    \end{subfigure}
    
    \begin{subfigure}[b]{0.4\textwidth}
        \includegraphics[width=\textwidth]{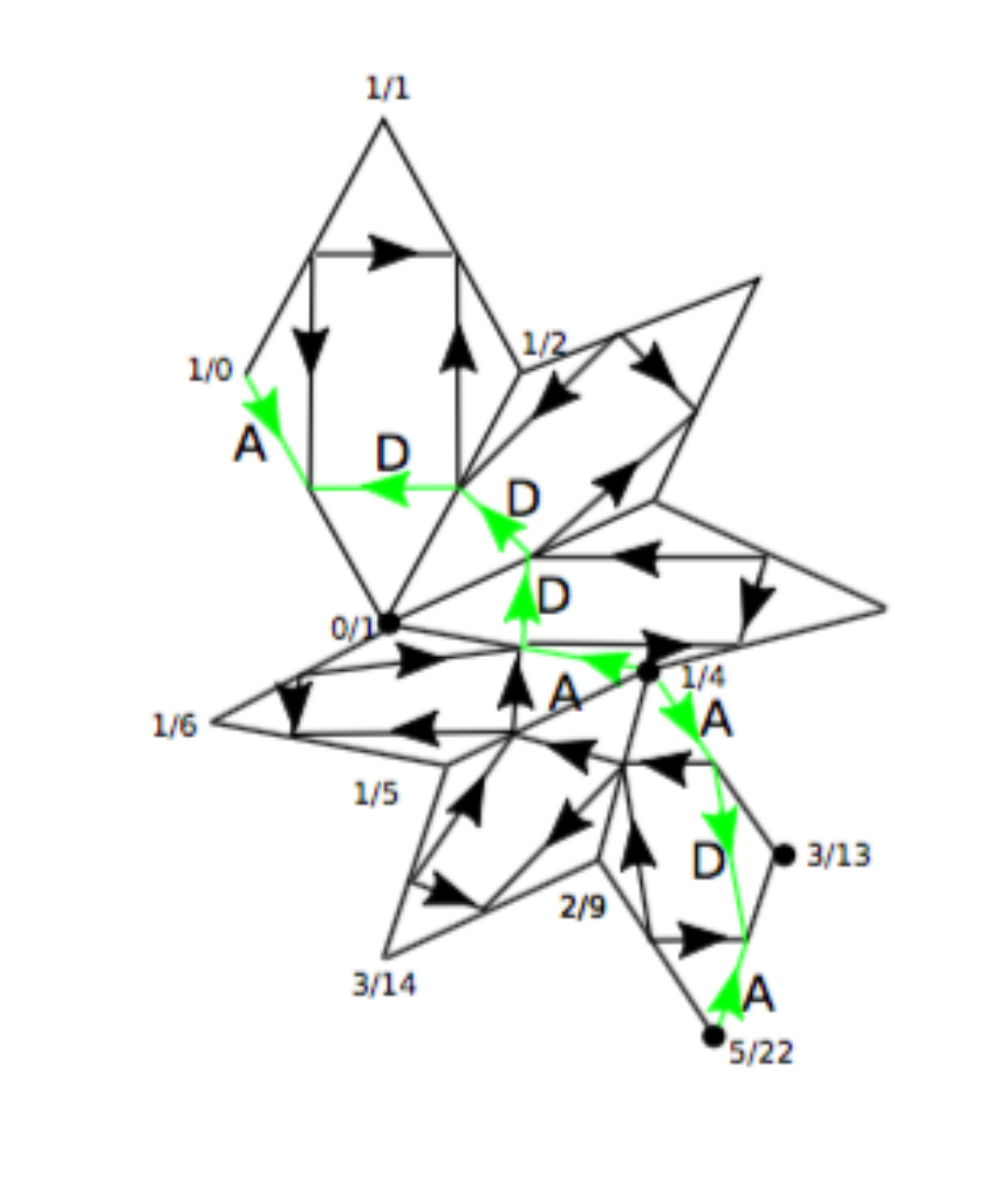}
        \caption{not a string}
         \label{nstring}
   \end{subfigure}
   \caption{}
\end{figure}

Proposition 6.1 of \cite{FH} states sufficient and necessary conditions for fibering:

\begin{proposition}
A surface in $S^3-L_{\beta/\alpha}$ is a fiber of a fibering $S^3-L_{\beta/\alpha} \rightarrow S^1$ if and only if it is isotopic to a surface carried by a branched surface $\Sigma_{\gamma}$ whose associated edge-path $\gamma$ from $1/0$ to $\beta/\alpha$, in a determined $D_t$, consists of a single string of $A-$ and $D-$ type edges. 
\end{proposition}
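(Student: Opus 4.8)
The plan is to use the standard product characterization of fibers: a properly embedded orientable surface $S$ in the compact orientable $3$-manifold $M=S^3-\mathrm{int}\,N(L_{\beta/\alpha})$ is a fiber of a fibration $M\to S^1$ if and only if the manifold $M_S$ obtained by cutting $M$ along $S$ is a product $S\times I$, with the two copies of $S$ appearing as $S\times\{0\}$ and $S\times\{1\}$. Since a fiber is incompressible and $\partial$-incompressible, the Floyd--Hatcher classification quoted above lets me isotope $S$ so that it is carried by a branched surface $\Sigma_\gamma$ for some minimal edge-path $\gamma$ from $1/0$ to $\beta/\alpha$ in a suitable $D_t$. Thus both implications reduce to reading the product structure of $M_S$ off the combinatorics of $\gamma$.

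First I would decompose $M_S$ into blocks dictated by the edge-path. In the level-sphere picture $M=(S^2\times I)$ with the link arcs removed, the surface $S$ meets each noncritical level in arcs and changes only at the essential saddles corresponding to the edges $e_1,\dots,e_k$ of $\gamma$. Cutting along $S$ turns each region between two consecutive critical levels into an $I$-bundle over the complementary arcs, while each saddle contributes one elementary piece whose type depends on whether the edge is $A$, $B$, $C$, or $D$ (Figure \ref{fig:Saddletypes}). The core bookkeeping is to identify, for each edge type, the local complementary piece and to record how consecutive pieces are glued across the intervening edge.

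For the \emph{if} direction I would verify that an $A$-type and a $D$-type saddle each contribute a complementary piece that is a trivial $I$-bundle, and that the defining property of a string — consecutive $A/D$ edges separated by exactly one edge — forces the gluing regions between successive blocks to be trivial $I$-bundles as well; assembling these along $\gamma$ then exhibits $M_S$ as $S\times I$, with the two sides of the cut as the ends and no extra handle created. For the \emph{only if} direction I would show that any deviation from a single string obstructs this product structure. Here Euler characteristic is useless: since $M$ is a link complement $\chi(M)=0$, so $\chi(M_S)=\chi(S)$ for every $\gamma$, and the obstruction must be genuinely topological. I would argue that a $B$- or $C$-type edge contributes a complementary piece that is not a trivial $I$-bundle over a subsurface of $S$ — concretely, it carries an essential vertical product-annulus or an extra $1$-handle — so that $M_S$ fails to be $S\times I$, and that a break in the string (two strings separated by more than one edge) introduces such a region as well.

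The main obstacle I anticipate is precisely this \emph{only if} direction. Because the Euler-characteristic bookkeeping is automatically satisfied, ruling out non-string edge-paths cannot be done by counting; one must exhibit, in each offending local model of Figure \ref{fig:Saddletypes}, a concrete obstruction to the trivial $I$-bundle structure (an essential annulus or a genuine $1$-handle in $M_S$) and check that the ``separated by one edge'' hypothesis is exactly what prevents such obstructions in the string case. Making this local-to-global passage rigorous — showing the local obstructions survive in the assembled $M_S$ and are not cancelled by the gluings between blocks — is where the real work lies.
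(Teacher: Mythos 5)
The first thing to note is that the paper does not prove this statement at all: it is quoted verbatim as Proposition~6.1 of Floyd--Hatcher \cite{FH}, and the authors simply cite it. So there is no in-paper argument to compare yours against; the relevant comparison is with the proof in \cite{FH}, which does proceed, as you propose, by analyzing the complementary regions of (a neighborhood of) the carried surface in the level-sphere decomposition and deciding when they assemble into a product. Your high-level strategy --- Stallings' product criterion, decomposition of $M_S$ into blocks between critical levels, and reading the $I$-bundle structure off the saddle types --- is therefore the right one, and your observation that the Euler-characteristic bookkeeping is vacuous (since $\chi(M)=0$ forces $\chi(M_S)=\chi(S)$ for every edge-path) correctly identifies why the obstruction must be found elsewhere.

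That said, what you have written is a plan, not a proof. Every decisive claim is deferred: you say you ``would verify'' that $A$- and $D$-saddles contribute trivial $I$-bundle pieces, ``would argue'' that $B$- and $C$-saddles carry an essential annulus or extra $1$-handle, and ``would show'' that a break between strings does the same --- and you explicitly flag the local-to-global passage as ``where the real work lies.'' None of these verifications is carried out, so the \emph{only if} direction in particular is entirely open. There is also a logical gap you do not address: your contrapositive strategy (``if $\gamma$ is not a single string, then the carried surface is not a fiber'') only yields the stated \emph{only if} direction if a given essential surface determines its minimal edge-path essentially uniquely; otherwise a fiber could be carried by a non-string path while also being isotopic to a surface carried by a string path, and exhibiting an obstruction for one carrying would prove nothing. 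You would need to invoke the uniqueness portion of the Floyd--Hatcher classification (the surface determines $t=\mu/\rho$ and, up to the listed ambiguities, the minimal path) to close this. As it stands, the proposal identifies the correct skeleton but proves neither implication.
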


The following theorem tells us conditions on the continued fraction expansion, considered in this work, for a surface $S$ to correspond to a fiber of a fibering $S^3-L_{\beta/\alpha} \rightarrow S^1$.

\begin{theorem}
\label{fiberingtheorem}
\label{fibering}
Let $L_{\beta/\alpha}$ be a link and $S$ a surface in $S^3-L_{\beta/\alpha}$. 

\begin{enumerate}
    \item Suppose $S$ is associated to an $AD$-edge-path. $S$ is a fiber of a fibering $S^3-L_{\beta/\alpha} \rightarrow S^1$ if and only if the continued fraction expansion for $\beta/\alpha$ has the form $[r; 2r_1, 2\epsilon_2, 2r_3, ... ,2\epsilon_{n-1}, 2r_n ]$ with $r=0,1$ and $|\epsilon_i|=1$.
    
    \item Suppose $S$ is associated to an $A$-edge-path. $S$ is a fiber of a fibering $S^3-L_{\beta/\alpha} \rightarrow S^1$ if and only if the continued fraction expansion for $\beta/\alpha$ has the form $[r; 2\epsilon_1, 2\epsilon_2, ..., 2\epsilon_n]$ with $r=0,1$ and $|\epsilon_i| =1$ for all $i$.
    
    \item Suppose $S$ is associated to a $D$-edge-path. $S$ is a fiber of a fibering $S^3-L_{\beta/\alpha} \rightarrow S^1$ if and only if the continued fraction expansion for $\beta/\alpha$ has the form $[0; 2r_1,-2, 2r_2, ... -2, 2r_n ]$ with $2r_i$ positive for all $i$. Thus the fraction starting with 1, is of the form $[1; 2n_1, 2, 2n_2, ... , 2, 2n_j]$ with $2n_k$ negative for all $k$.
    
\end{enumerate}
\end{theorem}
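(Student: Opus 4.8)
The plan is to deduce all three parts from the single-string criterion of \cite{FH} recalled above: $S$ fibers $S^3-L_{\beta/\alpha}$ if and only if, for the appropriate $t$, the edge-path $\gamma$ from $1/0$ to $\beta/\alpha$ carrying $S$ is one single string of $A$- and $D$-type edges. Since Section \ref{continuedfrac} already writes down, for each of the three surface types, the explicit edge-path determined by the even continued fraction of $\beta/\alpha$, the problem collapses to a purely combinatorial one: decide when that explicit path is a single string, and translate the answer back into a condition on the entries using the bijection between minimal edge-paths and even continued-fraction expansions of \cite{HT}. Each equivalence I would prove in both directions simultaneously---forward, because a fiber's carrying path is a single string and the path of Section \ref{continuedfrac} is the unique one carrying $S$; backward, by exhibiting the single string directly from the prescribed fraction.

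First I would localize the string condition. Being a single string means that every two consecutive edges of $\gamma$ (all of type $A$ or $D$ here) are separated, at the vertex they share, by exactly one edge of $D_t$; I would read this off the pictures of $D_t$ and of the saddles (Figures \ref{fig:Dt} and \ref{fig:Saddletypes}), using the model examples of Figures \ref{string} and \ref{nstring} as a template. By Remark \ref{girosizqder} an entry $2r$ contributes a run of $|r|$ consecutive $D$-edges at its vertex; the essential point is that consecutive $D$-edges inside such a run are automatically separated by the single edge shared by the two adjacent quadrilaterals, so a $D$-run of arbitrary length is already a string, and the constraint only bites at the junctions between runs. There I would check that the two path-edges meeting at a junction are separated by a single edge of $D_t$ precisely when the entry governing that junction equals $\pm 2$, i.e. $|r|=1$.

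With this dictionary the three cases follow. For an $A$-edge-path ($\mu=1$, so $t=1$) there are no $D$-runs, hence every interior vertex is a junction and the single-string condition forces $|\epsilon_i|=1$ for every $i$, which is Part (2). For the $AD$-edge-path $A\,\underbrace{D\cdots D}_{|r_1|}\,AA\cdots AA\,\underbrace{D\cdots D}_{|r_k|}\,A$ of Proposition \ref{genusAD}, the odd-indexed entries prescribe the (internally string-compatible) $D$-run lengths and are therefore free, while the even-indexed entries prescribe the junctions; the single-string condition is then exactly that every even-indexed entry equals $\pm 2$, giving Part (1). For a $D$-edge-path, which lives in $D_0$ by Remark \ref{Dcero}, the same split into free runs and junction entries holds, but now every edge is of type $D$, so I would additionally invoke Lemma \ref{lemma:sillas}(2)---all $\mu$ parallel arcs are coherently oriented---to see that coherence of the all-$D$ path fixes the sign of each junction entry and of each run entry, producing the signed normal form $[0;2r_1,-2,2r_2,\dots,-2,2r_n]$ with every $2r_i>0$ of Part (3).

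It then remains to identify the two normal forms in Part (3) by the standard identity rewriting a $[0;\dots]$ even expansion as the $[1;\dots]$ even expansion of the same rational, which turns each fixed $-2$ into $+2$ and reverses the signs of the free entries, yielding $[1;2n_1,2,2n_2,\dots,2,2n_j]$ with every $2n_k<0$. The hard part will be the local lemma underlying everything: proving rigorously, not merely by inspecting the figures, that two consecutive $A$- or $D$-edges of $\gamma$ are separated by a single edge of $D_t$ exactly when the governing entry is $\pm 2$. This demands a uniform case analysis of the edges leaving a vertex according to the parity of its denominator, carried out separately in $D_1$, in a generic $D_t$, and in $D_0$, together with---for Part (3)---the orientation bookkeeping that pins down the signs. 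Once that lemma is secured, the three global statements are immediate.
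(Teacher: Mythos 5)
Your overall strategy---reduce everything to the Floyd--Hatcher single-string criterion and then decide combinatorially when the explicit edge-path of Section \ref{continuedfrac} is a single string---is exactly the paper's strategy, and your treatment of Parts (1) and (2) matches the paper's: consecutive $D$-edges within a run and $A$--$D$ adjacencies are automatically separated by one edge, so the condition only bites at the $AA$ junctions (resp.\ at every vertex for an $A$-path), where it forces the governing entry to be $\pm 2$.

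The gap is in Part (3). You propose to pin down the signs in the normal form $[0;2r_1,-2,2r_3,\dots,-2,2r_n]$ with $2r_i>0$ by invoking Lemma \ref{lemma:sillas}(2), i.e.\ coherent orientation of the $\mu$ parallel arcs. But the string condition is a property of the edge-path in the diagram $D_0$ alone, independent of any surface it carries, and Lemma \ref{lemma:sillas}(2) says nothing about how many $B$-edges of $D_0$ separate two consecutive $D$-edges of $\gamma$ at a shared vertex. That count is governed by the left/right turning directions of Remark \ref{girosizqder}, hence by the \emph{signs} of the two entries adjacent to the junction, and it is asymmetric in those signs: the paper shows that at a vertex $\beta_1/\alpha_1$ the separating $B$-edge count is $2r_2+2r_3-1$ if both are positive, $2r_2$ if $2r_2>0>2r_3$, $|2r_2|+|2r_3|-1$ if both are negative, and $|2r_2|-1$ only in the case $2r_2<0<2r_3$. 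Only the last case can yield a single separating edge, and only when $2r_2=-2$; this simultaneously forces the junction entry to be $-2$ (not merely $|{\cdot}|=2$) \emph{and} the adjacent run entries to be positive. So your ``local lemma'' that a junction is string-compatible iff the entry is $\pm2$ is false in $D_0$, and the orientation argument you substitute for the four-case turn analysis would not produce the required sign constraints. The paper then propagates this local conclusion along the path by a $PSL_2(\mathbb{Z})$ normalization of successive vertex triples, and handles the $[1;\dots]$ normal form by a parallel analysis rather than by the rewriting identity you invoke; the latter is a minor point, but the missing turn-direction case analysis is the actual content of Part (3) and must be supplied.
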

\begin{proof}
In each case we need to verify that the corresponding path in the adequate diagram $D_t$ is a string.
\begin{enumerate}
    \item Let $\gamma= A\underbrace{DD...D}_{\vert r_1\vert}AA\underbrace{DD...D}_{\vert r_3\vert }AA...AA\underbrace{DD...D}_{\vert r_{k}\vert}A$
be the edge-path arising from the continued fraction expansions $[r;2r_1,2r_2,\dots,2r_k]$. Observe that any two consecutive $A$ and $D$ are separated by exactly one $C-$type edge in $D_t$, with $t\neq 0,1, \infty$. And any two consecutive $D-$type edges are separated by exactly one $A-$ or $B-$type edge, see Figures \ref{string} and \ref{nstring}. To guarantee that $\gamma$ is a single string, it is necessary to check when two consecutive $A-$type edges are separated by only one edge. By inspecting Figure \ref{nstring}, is easy to observe that  two $A-$type edges are separated by only one edge if $2r_2=2, -2$. This pattern is extended to the whole path $\gamma$. Thus the condition is that $r_l= 2{\epsilon}$ with $\epsilon=-1,1$ for $l$-even in the continued fraction expansion $[r;2r_1,2r_2,\dots,2r_k]$. 

\item Consider the continued fraction expansion $[r; 2r_1,..., 2r_j]$, since $S$ is associated an $A$-edge-path $\gamma$ in $D_1$ diagram, $\gamma$ goes through all the vertices $1/0, \beta_0/\alpha_0, \beta_1/\alpha_1, ..., \beta_j/\alpha_j=\beta/\alpha$. For $\gamma$ to be a string, every two consecutive $A$-type edges must be separated by exactly one $C$-type edge or by exactly on $A$-type edge. There are two possibilities depicted in Figures \ref{pathA-1} and \ref{pathA-2}, we see that $\vert 2r_i \vert=2$ for all $i$. Thus the continued fraction expansion has the form $[r; 2\epsilon_1, 2\epsilon_2, ..., 2\epsilon_n ]$ with $\epsilon_i = \pm 1$.

\item Let us consider the continued fraction expansion $[0; 2r_1,..., 2r_j]$, in this case the surface $S$ is in correspondence with a $D$-edge-path $\gamma$ in the $D_0$ diagram. The first $r_1$ edges of type $D$ pass through vertices $1/0, 1/2, 1/4, ... 1/2r_1= \beta_1/\alpha_1$. Each two consecutive $D$-type edges are separated by exactly one $B$-type edge. Thus, that piece of $\gamma$ satisfies the condition to be a string. See Figure \ref{pathD-1}. A similar phenomenon occurs around a vertex $\beta_i/\alpha_i$ with $i$-even. It is necessary to determine when two consecutive $D$-edges with common vertex $\beta_i/\alpha_i$ with $i$-odd are separated by exactly one $B$-edge.

Next we will determine conditions for $r_2, r_3$ in order to keep $\gamma$ being a string, up to $PSL_2(\mathbb{Z})$ transformation, we will be able to argue that the conditions for $r_2, r_3$ can be extended to the following $r_i's$.

First let us consider $2r_2, 2r_3$ both positive. The $B$-edge connecting $0/1$ and $1/2r_1$ has to turn left $2r_2$-edges to reach the vertex $\beta_2/\alpha_2$. Then the edge connecting $1/2r_1$ and $\beta_2/\alpha_2$ has to turn right $2r_3$-edges to reach vertex $\beta_3/\alpha_3$. Recall that the turns at each vertex was described in Remark \ref{girosizqder}, for the situation just described see Figure \ref{pathD-1}. The two consecutive $D$-edges with common vertex $1/2r_1$ are separated by $2r_2+2r_3-1$ $B$-edges, since $2r_2,2r_3 \geq 2$, there are at least three $B$-edges in between. Hence this situation will not give a string.

Secondly consider $2r_2$ positive and $2r_3$ negative. In this case, 
the $B$-edge connecting $0/1$ and $1/2r_1$ has to turn left $2r_2$-edges to reach the vertex $\beta_2/\alpha_2$. Then the edge connecting $1/2r_1$ and $\beta_2/\alpha_2$ has to turn left $2r_3$-edges to reach vertex $\beta_3/\alpha_3$. The two consecutive $D$-edges with common vertex $1/2r_1$ are separated by $2r_2$ $B$-edges, since $2r_2 \geq 2$, there are at least two $B$-edges in between. Hence this situation will not give a string. See Figure \ref{pathD-2}

Thirdly suppose $2r_2$ and $2r_3$ are negative. The $B$-edge connecting $0/1$ and $1/2r_1$ has to turn right $2r_2$-edges to reach the vertex $\beta_2/\alpha_2$. Then the edge connecting $1/2r_1$ and $\beta_2/\alpha_2$ has to turn left $2r_3$-edges to reach vertex $\beta_3/\alpha_3$.  See Figure \ref{pathD-3}. The two consecutive $D$-edges with common vertex $1/2r_1$ are separated by $\vert 2r_2\vert +\vert 2r_3\vert-1$ $B$-edges, since $2r_2,2r_3 \geq -2$, there are at least three $B$-edges in between. Thus this case will not give a string.

Finally, if $2r_2$ is negative and $2r_3$ is positive. The $B$-edge connecting $0/1$ and $1/2r_1$ has to turn right $2r_2$-edges to reach the vertex $\beta_2/\alpha_2$. Then the edge connecting $1/2r_1$ and $\beta_2/\alpha_2$ has to turn right $2r_3$-edges to reach vertex $\beta_3/\alpha_3$.  See figure \ref{pathD-4}. In this case the edges with common vertex $1/2r_1$ are separated by $\vert 2r_2\vert -1$ $B$-edges, so to obtain a string is necessary to $2r_2=-2$.

At this point we have that the continued fraction expansion looks like $[0; 2r_1, -2, 2r_3, x_4, ...,x_n]$.

Using a transformation in $PSL_2(\mathbb{Z})$, we can put in correspondence $\beta_1/ \alpha_1 \rightarrow 1/0$, $\beta_2/\alpha_2 \rightarrow 0/1$ and $\beta_3/\alpha_3 \rightarrow \beta_1/\alpha_1$. Analysing as above we are able to conclude that $2r_4=-2$ and $2r_5$
is positive. Thus, if we keep doing the correspondence for the remaining vertices, we conclude that the continued fraction expansion has the form $[0; 2r_1, -2, 2r_3, -2, ..., -2, 2r_n]$ with $2r_i$ positive for all $i$-odd. A similar analysis shows that the other continued fraction expansion must be $[1; 2n_1, 2, 2n_2, ... , 2, 2n_j]$ with $2n_k$ negative for all $k$.

\end{enumerate}
\end{proof}
\begin{figure}
    \centering
    \begin{subfigure}[b]{0.3\textwidth}
        \includegraphics[width=\textwidth]{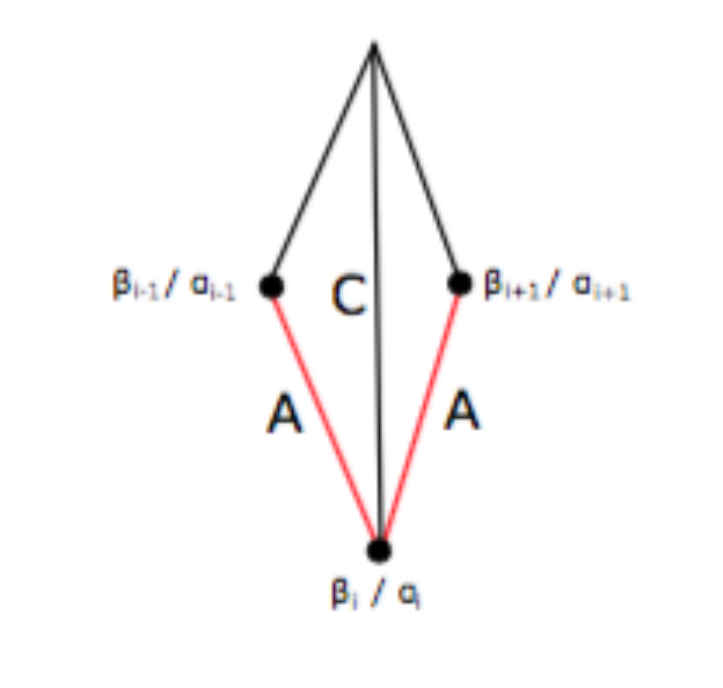}
        \caption{$2r_2$ and $2r_3$ positive}
        \label{pathA-1}
    \end{subfigure}
      \begin{subfigure}[b]{0.4\textwidth}
        \includegraphics[width=\textwidth]{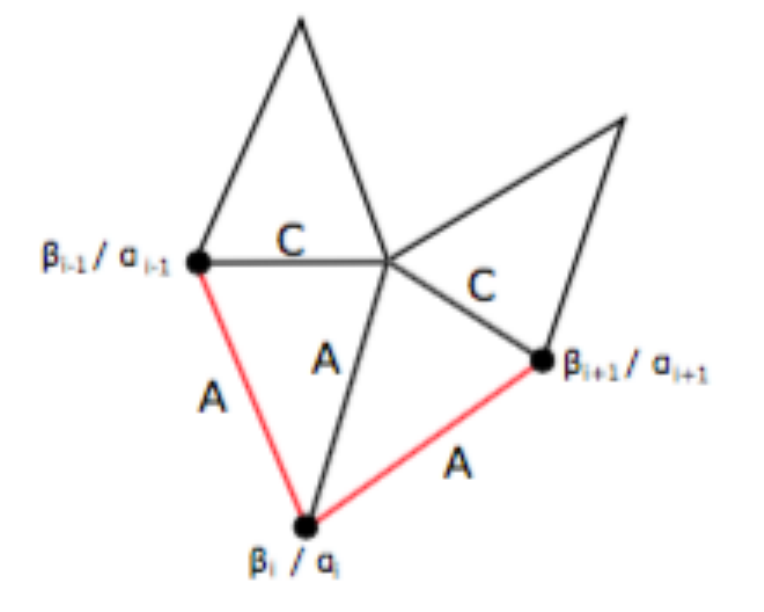}
        \caption{$2r_2$ positive and $2r_3$ negative}
         \label{pathA-2}
   \end{subfigure}
   
   \caption{Possibilities for $A$-edges in $D_1$ to belong to a string.}
\end{figure}

\begin{figure}
    \centering
    \begin{subfigure}[b]{0.3\textwidth}
        \includegraphics[width=\textwidth]{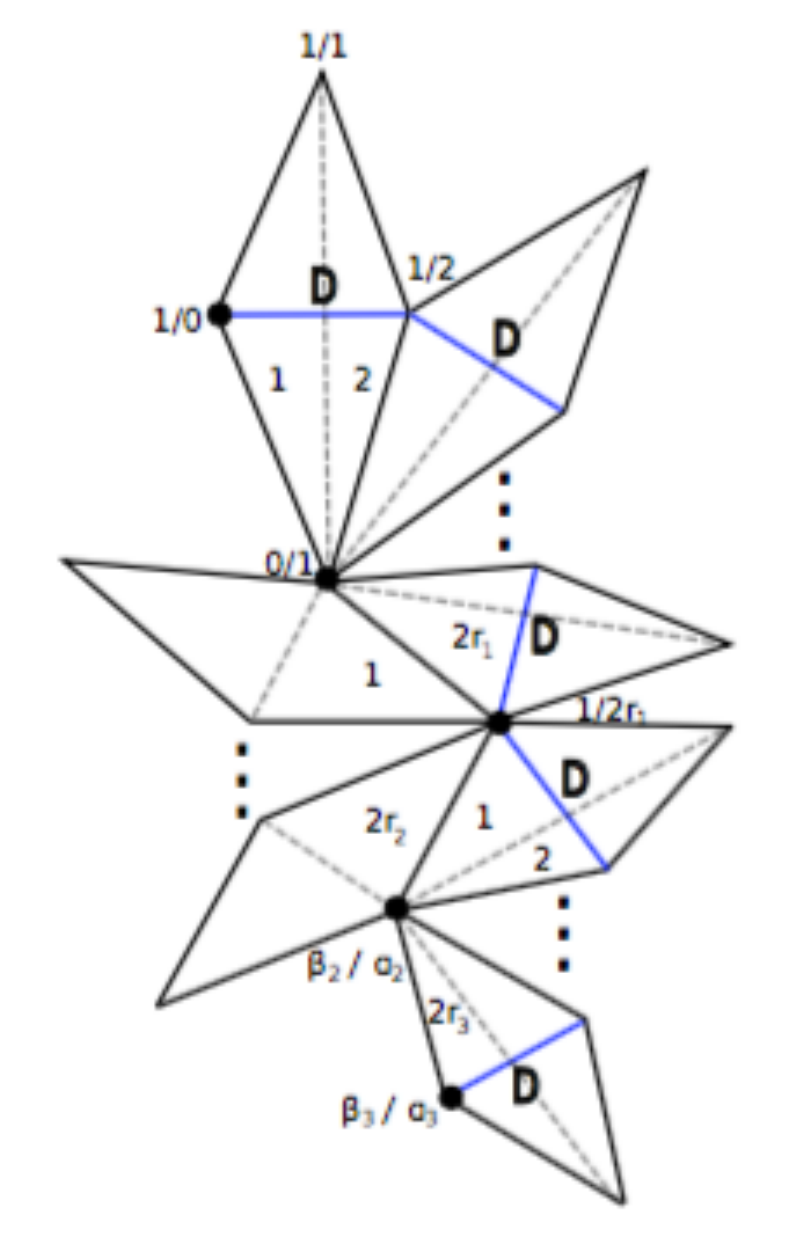}
        \caption{$2r_2$ and $2r_3$ positive}
        \label{pathD-1}
    \end{subfigure}
   
    \begin{subfigure}[b]{0.4\textwidth}
        \includegraphics[width=\textwidth]{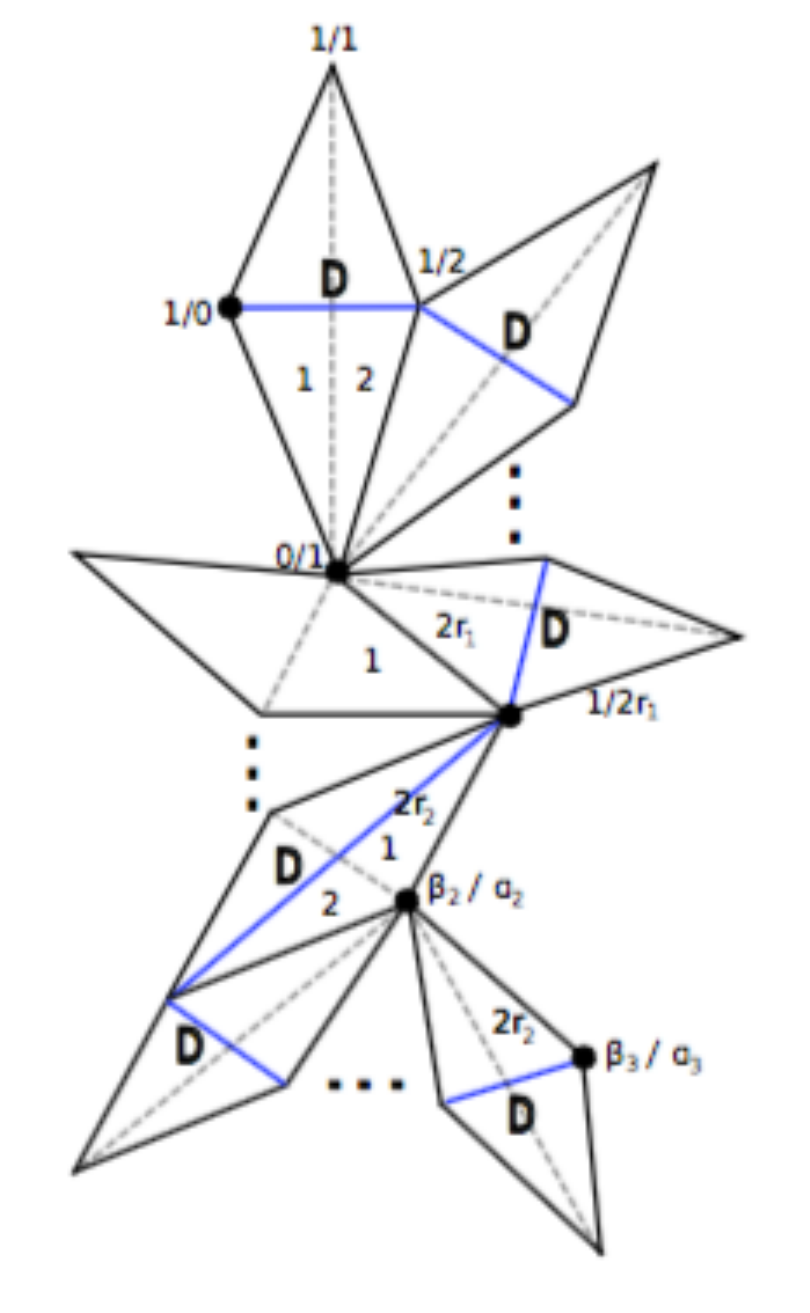}
        \caption{$2r_2$ positive and $2r_3$ negative}
         \label{pathD-2}
   \end{subfigure}
   \begin{subfigure}[b]{0.4\textwidth}
        \includegraphics[width=\textwidth]{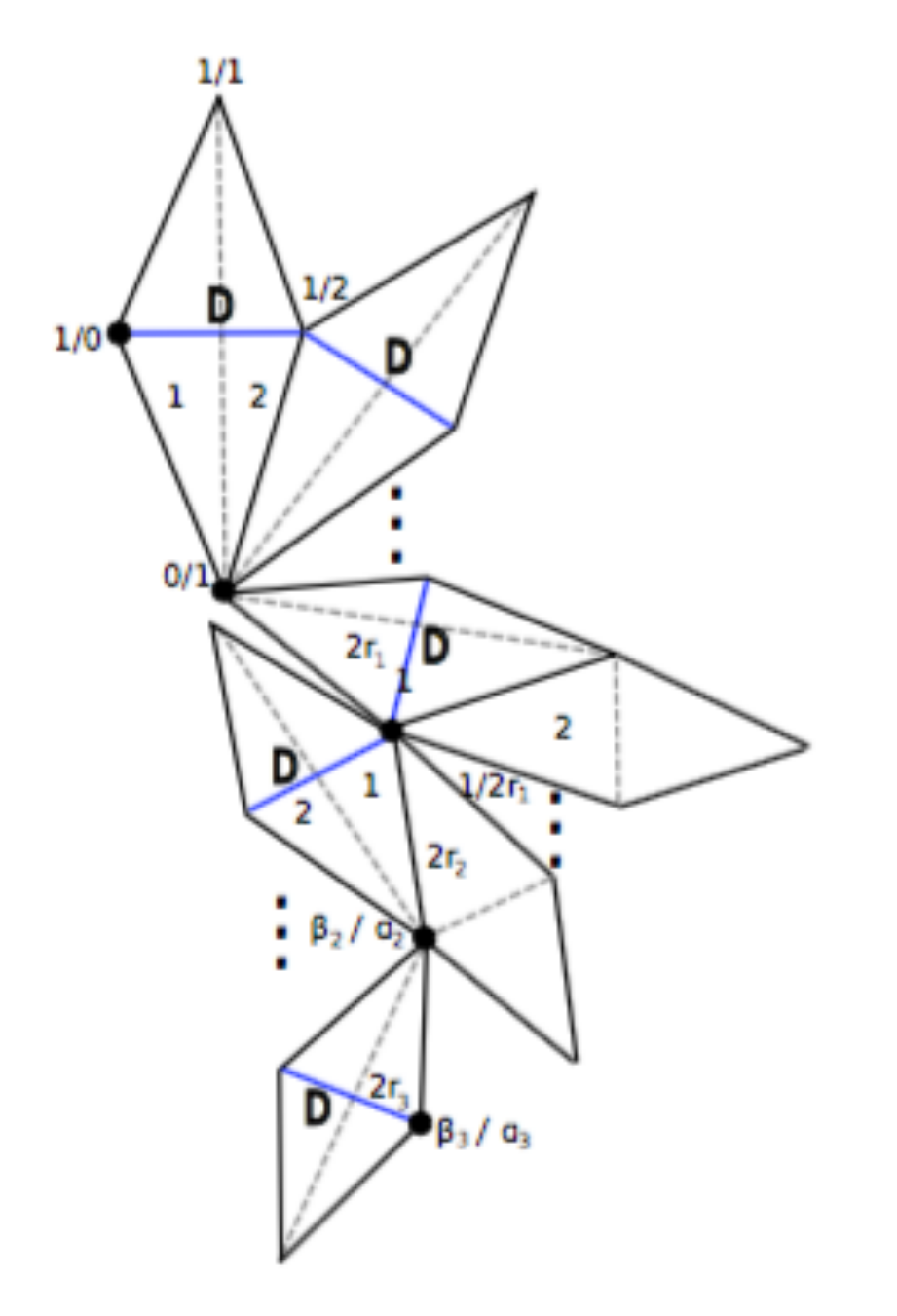}
        \caption{$2r_2$ and $2r_3$ negative}
         \label{pathD-3}
   \end{subfigure}
   \begin{subfigure}[b]{0.4\textwidth}
        \includegraphics[width=\textwidth]{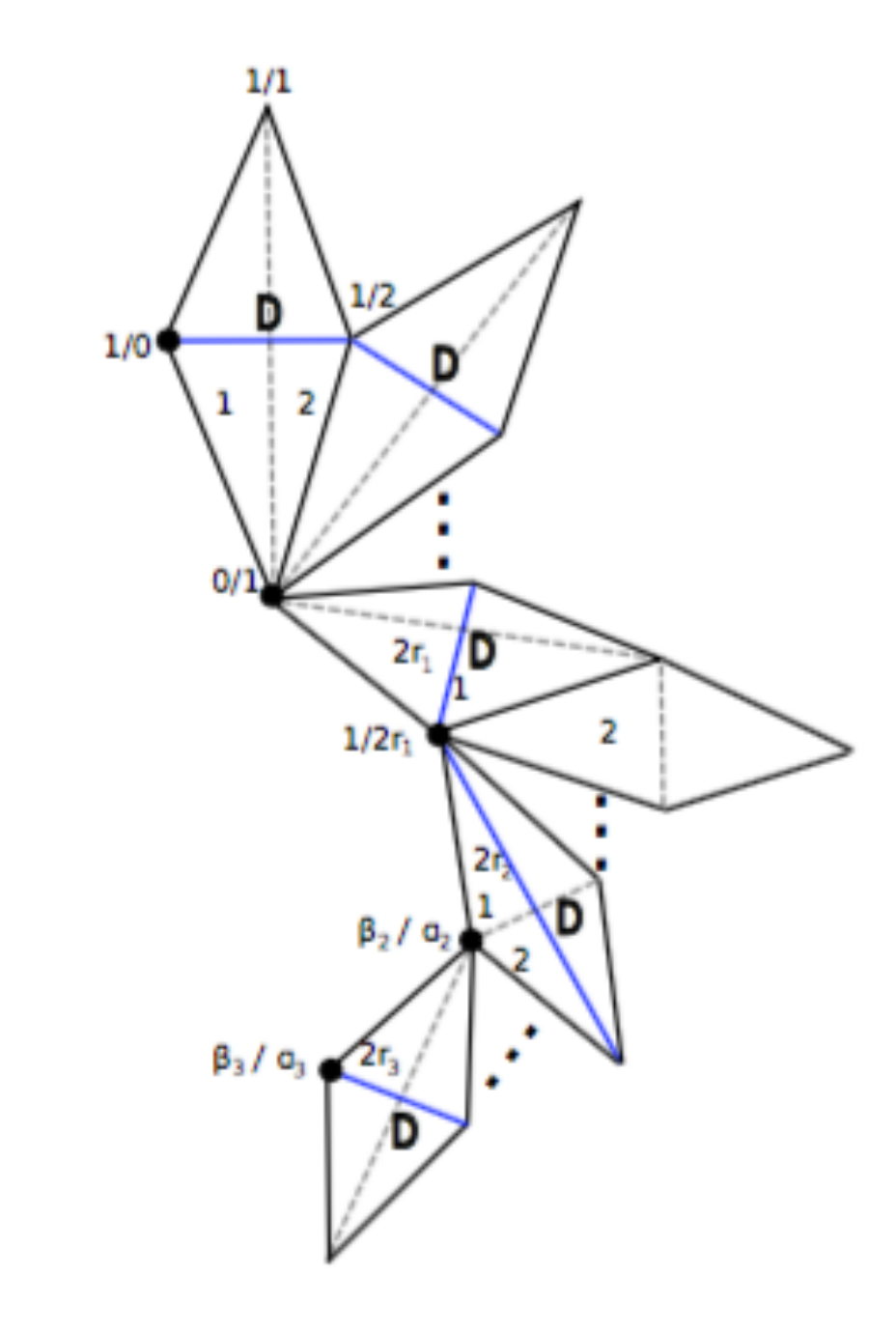}
        \caption{$2r_2$ negative and $2r_3$ positive}
         \label{pathD-4}
   \end{subfigure}
   \caption{Analyse of $D$-edge-path in $D_0$ to belong to a string.}
\end{figure}

\begin{corollary}
 Let $L_{\beta/\alpha}=K_1 \cup K_2$ be a link with $lk(K_1, K_2)=0$. A surface $S$ in $S^3-L_{\beta/\alpha}$ associated to a $D$-edge-path is not a fiber of a fibering $S^3-L_{\beta/\alpha} \rightarrow S^1$.
\end{corollary}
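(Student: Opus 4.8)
The plan is to combine the fibering characterization for $D$-edge-paths (Theorem \ref{fiberingtheorem}(3)) with the dictionary between the linking number and the continued fraction expansion established just before Corollary \ref{linkingnegativo}. The argument is by contradiction: I would assume $S$ is a fiber and derive that $lk(K_1,K_2)>0$, contradicting the hypothesis $lk(K_1,K_2)=0$.

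First I would invoke Theorem \ref{fiberingtheorem}(3): since $S$ is carried by a $D$-edge-path and is assumed to fiber, the (unique even-entry, odd-length) continued fraction expansion $[0;2n_1,2n_2,\dots,2n_j]$ of $\beta/\alpha$ must be of the special form $[0;2r_1,-2,2r_2,-2,\dots,-2,2r_n]$ with every $2r_i$ positive. The key observation is that this special expansion is itself an even-entry expansion of odd length, and such an expansion is unique, so it must literally coincide with $[0;2n_1,\dots,2n_j]$.

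Next I would match entries. In $[0;2r_1,-2,2r_2,\dots,-2,2r_n]$ the odd-position half-entries are precisely $r_1,r_2,\dots,r_n$, while the even-position half-entries all equal $-1$; that is, $n_1=r_1,\ n_3=r_2,\ \dots,\ n_j=r_n$. Since the fibering form forces every $2r_i>0$, each of these odd-indexed $n_k$ is strictly positive.

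Finally I would apply the formula $lk(K_1,K_2)=n_1+n_3+\cdots+n_j$ recorded before Corollary \ref{linkingnegativo}, which gives $lk(K_1,K_2)=r_1+\cdots+r_n>0$. This contradicts $lk(K_1,K_2)=0$, so no $D$-edge-path surface can fiber. I do not expect a genuine obstacle here; the only points requiring care are the bookkeeping of which continued-fraction positions are \emph{odd-indexed} and hence contribute to the linking number, and the observation that uniqueness of the even-entry expansion lets me apply the linking-number formula to the fibering form directly.
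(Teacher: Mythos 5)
Your argument is correct and follows essentially the same route as the paper: invoke Theorem \ref{fiberingtheorem}(3) to force the continued fraction into the form $[0;2r_1,-2,2r_2,\dots,-2,2r_n]$ with all $2r_i$ positive, then conclude from the formula $lk(K_1,K_2)=n_1+n_3+\cdots+n_j$ that the linking number is a sum of positive terms and hence nonzero, contradicting the hypothesis. The extra bookkeeping you supply (uniqueness of the even-entry expansion and the identification of the odd-indexed entries) is exactly what the paper leaves implicit.
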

\begin{proof}
The third part of Theorem \ref{fiberingtheorem} implies that if the the surface $S$ is carried by a $D$-edge-path, then the continued fraction expansion for $\beta/\alpha$ is of the form $[0; 2r_1,-2, 2r_2, ... -2, 2r_n ]$ with $2r_i$ positive for all $i$. Thus the linking number is not equal to zero, a contradiction.
\end{proof}

\section{Applications}
\label{applications}
In this section we compute the genus of tunnel number one satellite knot, as well as torti-rational knots. Hirasawa and Murasugi, \cite{HM} have computed the genus of such knots using algebraic techniques, namely the Alexander polynomial. We give criteria to determine fiberedness of satellite tunnel number one knots only when $lk(K_1, K_2)\neq 0$.

\subsection{Tunnel number one satellites knots}
Morimoto and Sakuma \cite{MS} determined the knot types of satellite tunnel number one knots in $S^3$. These knots are constructed as follows. Let $K_0$ be a  $(p,q)$-torus knot in $S^3$ with $p\neq 1$ and $q\neq 1$, and let $L_{\beta/ \alpha}= K_1 \cup K_2$ be a 2-bridge in $S^3$ with $\alpha \geq 4$. Note that $K_0$ is a non-trivial knot, and $L_{\beta/ \alpha}$ is neither a trivial link nor a Hopf link. Since $K_1$ is the trivial knot in $S^3$, there is a an orientation preserving homeomorphism $f: E(K_1) \rightarrow N(K_0)$ which takes a meridian $m_2 \subset \bd E(K_1)$ of $K_1$ to a fiber $h \subset \bd N(K_0)=\bd E(K_0)$ of the unique Seifert fibration of $E(K_0)$. The knot $f(K_2)\subset N(K_0) \subset S^3$ is denoted by the symbol $K(\alpha, \beta; p, q)$. Every satellite knot of tunnel number one has the form $K(\alpha, \beta; p, q)$ for some integers $\alpha, \beta, p, q$. Eudave-Mu\~noz \cite{EM} obtained another description of these knots.

Let $l$ and $m$ be a preferred longitude and a meridian for $\bd N(K_0)$, respectively. 
Notice that $\Delta(l, h)=pq$ and then $\Delta(f^{-1}(l), m_2)=pq$,  where $\Delta$ stands for the  geometric intersection of two curves.

The next lemma can be found in \cite{BZ}, and it will be useful.

\begin{lemma}
\label{BZ}
Let $K=K(\alpha, \beta; p, q)$ be a satellite tunnel number one knot. Let $F$ be a minimal genus Seifert surface for $K$. The surface $F$ can be isotoped in such a way that $F \cap \bd N(K_0)$ consists of $\vert lk(K_1, K_2)\vert$ preferred longitudes and $F \cap (S^3 -N(K_0))$ is made of $\vert lk(K_1, K_2)\vert$ components which are Seifert surfaces for $K_0$.
\end{lemma}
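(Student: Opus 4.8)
The plan is to start from the companion torus $T=\bd N(K_0)$ and to simplify the intersection $F\cap T$ by a sequence of isotopies, using three ingredients: that a minimal genus Seifert surface is incompressible in the knot exterior $E(K)$, that $T$ is incompressible in the torus knot exterior $E(K_0)$, and that no admissible isotopy may increase the genus of $F$. First I would put $F$ in transverse position with respect to $T$, so that $F\cap T$ is a disjoint union of simple closed curves on $T$, and I would choose $F$ within its isotopy class (realizing the minimal genus) so that the number of these curves is smallest possible.

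Next I would remove the curves of $F\cap T$ that are inessential on $T$. An innermost such curve $c$ bounds a disk $D\subset T$ whose interior is disjoint from $F$; since $D$ lies in $E(K)$ and $F$ is incompressible there, $c$ cannot be essential on $F$, so $c$ bounds a disk on $F$ as well and a standard isotopy across $D$ removes $c$, lowering the number of intersection curves and contradicting minimality unless no such $c$ exists. Thus every curve of $F\cap T$ is essential on $T$, and being disjoint they are all parallel of a common slope $a\,l+b\,m$, where $l,m$ are the preferred longitude and meridian of $K_0$. To pin down the slope I would pass to the exterior piece $F\cap E(K_0)$: its boundary is null-homologous in $E(K_0)$, and since $H_1(E(K_0))\cong\mathbb{Z}$ is generated by $m$ with $l$ null-homologous, the oriented boundary can vanish only if the total meridional contribution cancels. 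If the curves are coherently oriented this forces $b=0$, i.e.\ they are longitudes; if instead oppositely oriented curves occur, an adjacent canceling pair cobounds an annulus $A\subset T$, which I would use to isotope $F$ across $A$, again reducing $\lvert F\cap T\rvert$ and contradicting minimality. Hence $F\cap T$ consists of coherently oriented preferred longitudes of $K_0$.

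To count them, I would transfer the winding number through $f$. Because $K_1$ is unknotted, $E(K_1)$ is a solid torus and the winding number of $K_2$ in it equals $lk(K_1,K_2)$; as $f\colon E(K_1)\to N(K_0)$ is a homeomorphism of solid tori, $K=f(K_2)$ has winding number $\pm\, lk(K_1,K_2)$ in $N(K_0)$. Computing $[\,\bd(F\cap N(K_0))\,]=0$ in $H_1(N(K_0))\cong\mathbb{Z}\langle\mathrm{core}\rangle$, where each longitude is homologous to the core and $[K]$ equals the winding number times the core, yields exactly $\lvert lk(K_1,K_2)\rvert$ longitudes. Finally, $F\cap E(K_0)$ is incompressible (inherited from $F$ together with the incompressibility of $T$ in $E(K_0)$) and has longitudinal boundary; invoking minimality of the genus of $F$, each of its components must be a minimal genus Seifert surface for $K_0$ with a single longitudinal boundary curve, giving the asserted $\lvert lk(K_1,K_2)\rvert$ components.

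The main obstacle I anticipate is the orientation bookkeeping in the middle step: the purely homological argument rules out a nonzero \emph{net} meridional slope but does not by itself prevent essential curves from occurring in oppositely oriented, mutually canceling families, nor does it guarantee that the exterior pieces each carry a single longitude. Both points are handled by the same principle --- any such redundancy can be undone by an annulus or disk isotopy that strictly decreases either the genus of $F$ or the number of intersection curves --- but making this reduction precise, and checking that it neither disconnects $F$ improperly nor raises the genus, is the delicate part of the argument and is exactly where the minimality hypothesis on $F$ is indispensable.
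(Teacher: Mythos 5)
A preliminary remark: the paper gives no proof of this lemma at all --- it is quoted from Burde--Zieschang \cite{BZ} --- so your argument is an independent reconstruction of the classical proof rather than an alternative to anything in the text. Your overall strategy (minimize $\lvert F\cap T\rvert$, remove curves inessential in $T$ using incompressibility of $F$ and an innermost disk, identify the slope homologically in $E(K_0)$, and count the longitudes via the winding number, which equals $\pm lk(K_1,K_2)$ because $f$ is a homeomorphism of solid tori) is the standard and correct one.

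Two steps are genuinely incomplete. First, eliminating an anti-parallel adjacent pair $c_1,c_2$ is not an isotopy of $F$ across the annulus $A\subset T$ unless the piece of $F$ between them happens to be an annulus parallel to $A$; in general the move is a cut-and-paste (replace the two collars of $c_1,c_2$ in $F$ by two push-offs of $A$), which yields a surface $F'$ with $\chi(F')=\chi(F)$ and $\lvert F'\cap T\rvert=\lvert F\cap T\rvert-2$ but which may be disconnected and need not be isotopic to $F$. One must pass to the component containing $K$, verify its genus has not increased, and rerun the minimality argument on the pair $(g,\lvert F\cap T\rvert)$; as written (``isotope across $A$'') the step is not valid. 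Second, and more seriously, the conclusion that $F\cap E(K_0)$ consists of $\lvert lk(K_1,K_2)\rvert$ components each carrying a \emph{single} longitudinal boundary curve does not follow from minimality of $\chi$ alone: a connected component with $k\geq 2$ coherently oriented longitudinal boundary curves is not touched by any innermost disk or annulus, and the Euler characteristic comparison with $k$ disjoint minimal genus Seifert surfaces can end in a tie. The missing ingredient is that $K_0$ is a nontrivial torus knot, hence fibered, so that by Stallings/Thurston every norm-minimizing surface in $E(K_0)$ in the class $w[l]\in H_2(E(K_0),\partial E(K_0))$ is isotopic to $\lvert w\rvert$ parallel copies of the fiber; your proposal never invokes fiberedness, and without it the structural assertion of the lemma (as opposed to the bare genus estimate) is not established.
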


First we consider the case when $lk(K_1, K_2)=0$.

\begin{theorem}
When the linking number is zero, the genus of a satellite tunnel number one knot is one half the wrapping number of $K_2$ in $E(K_1)$.
\end{theorem}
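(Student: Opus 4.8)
The plan is to reduce both the genus and the wrapping number to data carried by a single Floyd--Hatcher edge-path in the solid torus $E(K_1)$.

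First I would apply Lemma \ref{BZ}. Because $lk(K_1,K_2)=0$, a minimal genus Seifert surface $F$ for $K$ meets $\partial N(K_0)$ in $|lk(K_1,K_2)|=0$ longitudes, so $F\cap\partial N(K_0)=\emptyset$; since $\partial F=K$ lies in the interior of $N(K_0)$, the whole surface $F$ lies in $N(K_0)$. Carrying $F$ back through the homeomorphism $f^{-1}\colon N(K_0)\to E(K_1)$ produces a surface $G=f^{-1}(F)$ in the solid torus $E(K_1)$ with $\partial G=K_2$ and $g(G)=g(F)=g(K)$. Conversely, any spanning surface for $K_2$ in $E(K_1)$ pushes forward under $f$ to a Seifert surface for $K$, so $g(K)$ equals the minimal genus of a spanning surface for $K_2$ in $E(K_1)$. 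Moreover, a homeomorphism between solid tori carries compressing disks to compressing disks, hence meridian disks to meridian disks; thus $f$ identifies the wrapping number of $K_2$ in $E(K_1)$ with that of $K$ in $N(K_0)$, and it suffices to show that this minimal genus equals one half of the wrapping number $w$ of $K_2$ in $E(K_1)$.

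Next I would pass to the link complement. Deleting $N(K_2)$ makes $S=G\cap(S^3-L_{\beta/\alpha})$ an essential surface with a single boundary component parallel to $K_2$ and, in the configuration of Remark \ref{Dcero}, only meridional boundary on $K_1$; that remark then shows the branched surface carrying $S$ is determined by an edge-path lying in $D_0$ consisting solely of $D$-type edges. Writing $\beta/\alpha=[s;2r_1,\dots,2r_k]$ with $s\in\{0,1\}$ and $k$ odd, the dictionary of Remark \ref{girosizqder} shows that the number of $D$-edges around the vertex $\beta_t/\alpha_t$ is $|r_t|$, so the path has $\sum_t|r_t|$ $D$-edges in total.

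The \emph{crux} is then a dual count on this $D$-edge-path, and this is where I expect the real work to lie. On one hand, an Euler characteristic computation over the $D$-type saddles of the $D_0$ diagram (Figure \ref{fig:d-d0}) identifies the genus of $G$ with $\sum_t|r_t|$. On the other hand, each $D$-edge records the two strands of $K_2\cap S^2_r$ crossing a fixed meridian disk of $E(K_1)$ once in each sense, so $K_2$ meets that disk in $2\sum_t|r_t|$ points; minimality of the edge-path is what guarantees that this geometric count is the wrapping number $w$ rather than a mere upper bound. Combining the two counts gives $w=2\sum_t|r_t|=2\,g(G)=2\,g(K)$, which is the assertion. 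The main obstacle is exactly this last step: one must verify simultaneously that the $D$-edge-path surface minimizes genus (so the Euler characteristic count computes $g(K)$, not just an upper bound) and that it minimizes the geometric intersection with the meridian disk (so the strand count computes $w$). Both minimality statements should follow from the Floyd--Hatcher principle that every essential surface is carried by a minimal edge-path, together with the exclusion of $A$-, $B$- and $C$-type edges for this boundary configuration established earlier.
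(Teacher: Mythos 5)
Your reduction to a spanning surface for $K_2$ in the solid torus $E(K_1)$ via Lemma \ref{BZ} matches the paper's first step, but from there the paper does something entirely different from what you propose: it does not use the $D_0$ edge-path machinery for this theorem at all. Instead it proves (Lemma \ref{meri-comp}) by induction on the pair $(g(F'),\vert Y(F')\vert)$ --- using Lemma \ref{KT} and Lemma 21 of \cite{EMR} to rule out meridional incompressibility --- that the minimal genus surface $F'$ meridionally compresses exactly $g$ times to a disk $\Sigma$ spanned by $K_2$ meeting $K_1$ transversely in $2g$ points, and then recovers $F'$ from $\Sigma$ by attaching $g$ tubes. It is this two-way compression/tubing correspondence that delivers both inequalities $w\le 2g$ and $g\le w/2$.

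The genuine gap in your version is precisely the step you flag as the crux: identifying the strand count attached to the $D$-edges with the wrapping number. Minimality of the edge-path is a property of the carried surface $S$ (it certifies essentiality); the wrapping number is an invariant of the pair $(E(K_1),K_2)$, namely the minimum of $\vert K_2\cap D\vert$ over all meridian disks $D$ and all isotopies of $K_2$ in the solid torus, and no feature of one particular carried surface can by itself certify that one particular presentation of $K_2$ realizes that minimum. You would still need an argument that a meridian disk meeting $K_2$ fewer than $2\sum_t\vert r_t\vert$ times forces a contradiction with minimality of the genus --- which is exactly what the paper's meridional-compression induction supplies and what your proposal lacks. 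There are also two smaller soft spots: the surface $G$ has no boundary on $K_1$ at all, so $\mu=0$ rather than ``meridional boundary,'' and the saddle-per-edge counts of Lemma \ref{lemma:saddles-come-in-groups} (a $D$-edge carrying $\mu-1$ saddles) degenerate there, so the Euler characteristic computation you assert for the configuration of Figure \ref{fig:d-d0} must be done from scratch rather than quoted; and the claim that each $D$-edge contributes two points of $K_2\cap D$ for a fixed meridian disk $D$ conflates the level-sphere Morse decomposition of the surface with the meridian-disk structure of $E(K_1)$.
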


Suppose the 2-bridge presentation of $L_{\beta/ \alpha}$ is given relative to some 2-sphere $S$ in $S^3$ bounding 3-balls $W_0, W_1$ such that $L$ intersects $S$ transversely and $L\cap W_i$ is a disjoint union of two arcs. 
Consider $S\times I$ be a product regular neighborhood of $S$ in $S^3$, and let $h: S\times I \rightarrow I$ be the height function. We denote the level surfaces $h^{-1}(r)= S \times \{r\}$ by $S_r$ for each $0\leq r \leq 1$. $S_0$ bounds a 3-ball  $H_0$, and $S_1$ bounds a 3-ball $H_1$, such that $S^3= H_0 \cup (S\times I) \cup H_1$. Assume that $S_0 \subset W_0$, $S_1\subset W_1$, and that $h\vert (S\times I) \cap L$ has no critical points (so $(S\times I) \cap L_{\beta/ \alpha}$ consists of monotone arcs).

Let $F$ be an essential surface properly embedded in the exterior $E(L_{\beta/ \alpha})=S^3 - int N(L)$. 

By general position, an essential  surface can always be isotoped in $E(L_{\beta/ \alpha})$ so that:
-
\begin{description}
\item[(M1)] $F$ intersects $S_0 \cup S_1$ transversely; we denote the surfaces $F \cap H_0$, $F \cap H_1$, $F \cap (S\times I)$ by $F_0$, $F_1$, $\tilde{F}$, respectively;
\item[(M2)] each component of $\bd F$ is either a level meridian circle of $\bd E(L_{\beta/ \alpha})$ lying in some level set $S_r$ or it is transverse to all the level meridians circles of $\bd E(L_{\beta/ \alpha})$ in $S \times I$;
\item[(M3)]for $i=0,1$, any component of $F_i$ containing parts of $L$ is a cancelling disk for some arc of $L_{\beta/ \alpha} \cap H_i$; in particular, such cancelling disks are disjoint from any arc of $L_{\beta/ \alpha}\cap H_i$ other than the one they cancel;
\item[(M4)] $h\vert \tilde{F}$ is a Morse function with a finite set $Y(F)$ of critical points in the interior of $\tilde{F}$, located at different levels; in particular, $\tilde{F}$ intersects each noncritical level surface transversely.
\end{description}

We define the complexity of any surface satisfying $(M1)-(M4)$ as the number
\begin{center}
$c(F)= \vert \bd F_0 \vert +  \vert \bd F_1 \vert + \vert Y(F) \vert$,
\end{center}

\noindent where $\vert Z \vert$ stands for the number of elements in the finite set $Z$, or the number of components of the topological space $Z$.

We say that $F$ is \textit{meridionally incompressible} if whenever $F$ compresses in $S^3$ via a disk $D$ with $\bd D= D \cap F$ such that $D$ intersects $L_{\beta/ \alpha}$ in one point interior to $D$, then $\bd D$ is parallel in $F$ to some boundary component of $F$ which is a meridian circle in $\bd E(L_{\beta/ \alpha})$; otherwise, $F$ is \textit{meridionally compressible}. Observe that if $F$ is essential and meridionally compressible then a meridional surgery on $F$ produces a new essential surface in $E(L_{\beta/ \alpha})$.

The following is Lemma 3.2 of \cite{RV}.

\begin{lemma}
\label{KT}
Let $F$ be a surface in $S^3$ spanned by $K_2$ (orientable or not) and transverse to $K_1$, such that $F'= F \cap E(L_{\beta/ \alpha})$ is essential and meridionally incompressible in $E(L_{\beta/ \alpha})$. If $F'$ is isotoped so as to satisfy (M1)-(M4) with minimal complexity, then $\vert Y(F')\vert= 2- (\chi(F')+ \vert \bd F' \vert) $, and
\begin{enumerate}
\item each critical point of $h\vert \tilde{F}$ is a saddle,
\item for $0\leq r \leq 1$ any circle of $S_r \cap F$ is  nontrivial in $S_r- L_{\beta/ \alpha}$ and $F$, and
\item $F_0$ and $F_1$ each consists of one cancelling disk.
\end{enumerate}
\end{lemma}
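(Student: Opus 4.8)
The plan is to run every part of the statement as a minimality argument for the complexity $c(F')=|\partial F_0|+|\partial F_1|+|Y(F')|$ of the configuration (M1)--(M4). Assuming $c(F')$ is minimal, I would prove each of (2), (1), (3) by contradiction: if the conclusion fails, I exhibit an innermost-disk surgery or ambient isotopy of $F$ that preserves (M1)--(M4) but strictly decreases $c(F')$. The Euler-characteristic identity is then pure bookkeeping once (1) and (3) are in hand.

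First I would establish (2), which is the engine for everything else. Let $c$ be an innermost circle of some $S_r\cap F$ that is trivial in $S_r-L_{\beta/\alpha}$. If $c$ bounds a disk $D\subset S_r$ with $D\cap L_{\beta/\alpha}=\emptyset$, then since $c\subset F'$ and $F'$ is incompressible, $c$ also bounds a disk in $F'$; because $D$ is innermost its interior misses $F$, so I can isotope $F$ across $D$, removing $c$ together with whatever lay inside it and dropping either $|Y(F')|$ or some $|\partial F_i|$. If instead $c$ bounds a disk $D\subset S_r$ meeting $L_{\beta/\alpha}$ in exactly one point, i.e. $c$ is a meridian (peripheral) in $S_r-L_{\beta/\alpha}$, then $c$ bounds a disk in $S^3$ hitting $L_{\beta/\alpha}$ once, and \emph{meridional incompressibility} forces $c$ to be parallel in $F$ to a meridional boundary circle of $F'$; this parallelism again yields a complexity-reducing isotopy. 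The same innermost-disk move applied to a circle bounding a disk in $F$ disposes of the ``nontrivial in $F$'' clause. The delicate point is to check that each surgery is realized by a move (across a disk disjoint from $L_{\beta/\alpha}$, or meeting it once) that keeps (M1)--(M4) valid.

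Given (2), part (1) is immediate: a center (local maximum or minimum) of $h|\tilde F$ produces, on a level just inside it, a small circle bounding a cap disk in $\tilde F$, hence a circle trivial in $F$, contradicting (2); so every critical point is a saddle. For (3), note that by (M3) each component of $F_i$ meeting $L_{\beta/\alpha}$ is a cancelling disk, while a component of $F_i$ disjoint from $L_{\beta/\alpha}$ has all its boundary circles essential in $S_i-L_{\beta/\alpha}$ by (2); since it lies in the ball $H_i$, incompressibility and $\partial$-incompressibility of $F'$, together with minimality, let me remove it. As $K_2$ meets each ball $H_i$ in a single arc, exactly one cancelling disk survives, so $F_i$ is one cancelling disk. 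Finally, $\tilde F$ is a band ($\chi=1$) with $|Y(F')|$ one-handles attached at the saddles, whence $\chi(\tilde F)=1-|Y(F')|$; gluing the two cancelling disks along arcs gives $\chi(F)=\chi(\tilde F)$, and passing to $F'$ by deleting the $|\partial F'|-1$ meridian punctures of $K_1$ turns this into $|Y(F')|=2-(\chi(F')+|\partial F'|)$.

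I expect the main obstacle to be the reductions inside (2), and in particular the once-punctured case: there I must invoke meridional incompressibility rather than ordinary incompressibility, and then verify carefully that the resulting exchange both strictly lowers $c(F')$ and keeps the configuration within the class satisfying (M1)--(M4). Everything else either reduces to this step (as (1) does) or is a handle/Euler-characteristic count (as (3) and the final identity are).
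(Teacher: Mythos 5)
The paper itself offers no proof of this lemma: it is imported verbatim as Lemma~3.2 of \cite{RV}, so there is no in-text argument to compare yours against line by line. Your sketch is the standard Morse-theoretic argument that one would expect to find in \cite{RV} (and in the related arguments of Floyd--Hatcher and the $(1,1)$ classification papers): prove the level-circle statement (2) by innermost-disk complexity reduction, deduce (1) by excluding centers, clean up $F_0,F_1$ to get (3), and then read off $\vert Y(F')\vert$ from an Euler characteristic count. The overall strategy is right, and your final bookkeeping is correct: with (1) and (3) in hand, $\chi(\tilde F)=1-\vert Y(F')\vert$, gluing the two cancelling disks along arcs gives $\chi(F)=\chi(\tilde F)$, and $\chi(F)=\chi(F')+\vert\partial F'\vert-1$ since $F'$ is $F$ with $\vert\partial F'\vert-1$ transverse punctures of $K_1$ removed; this yields exactly $\vert Y(F')\vert=2-(\chi(F')+\vert\partial F'\vert)$.

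Two places in your treatment of (2) are thinner than you acknowledge. First, the clause ``nontrivial in $F$'' is not disposed of by ``the same innermost-disk move'': a circle $c$ of $S_r\cap F$ can be \emph{essential} in $S_r-L_{\beta/\alpha}$ while bounding a disk $E$ in $F$, and then there is no disk in $S_r$ to isotope across; the usual fix is to take $E$ innermost in $F$, so that $E$ lies entirely on one side of $S_r$, and then either $h\vert_E$ has a center (handled together with (1)) or $E$ forces an inessential circle at a nearby level, so this case must be folded into a joint induction with (1) rather than quoted as a corollary of the planar-disk case. Second, in the planar-disk case the move that replaces the subdisk $D'\subset F'$ by a push-off of $D\subset S_r$ uses irreducibility of $E(L_{\beta/\alpha})$ (to isotope $D'$ to $D$ rel boundary) and one must check that $D'$ actually carried positive complexity --- i.e.\ contained a critical point or a boundary circle of some $F_i$ --- so that $c(F')$ strictly drops and no new critical points are introduced; you flag this but do not carry it out. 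Neither issue is a wrong turn, but both are where the real work of the cited proof lives.
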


When $lk(K_1, K_2)=0$, Lemma \ref{BZ} implies that $F'=f^{-1}(F) \subset E(K_1)$. Moreover $F'$ is an incompressible genus $g$ Seifert surface for $K_2$.

\begin{lemma}
\label{meri-comp}
The surface $F'$ can be meridionally compressed $g$-times to obtain a  disk $\Sigma$ that satisfies the conditions of Lemma \ref{KT}. And $g$ is equal to the one half the wrapping number of $K_2$ with respect to $E(K_1)$. Moreover, if 
$[s;2r_1,...,2r_k]$ is the continued fraction expansion for $\beta/\alpha$ with $s=0$ or $1$ such that  $k$ odd, the genus of $K(\alpha, \beta, p, q)$ is $\Sigma \vert r_i\vert$.
\end{lemma}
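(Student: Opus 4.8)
The plan is to combine an Euler-characteristic bookkeeping for meridional compressions with the identification of the resulting disk as a $D$-edge-path in $D_0$. Recall that, because $lk(K_1,K_2)=0$, Lemma \ref{BZ} places all of $F$ inside $N(K_0)$, so $F'=f^{-1}(F)\subset V:=E(K_1)$ is a genus $g$ Seifert surface for $K_2$, and $K_2$ is unknotted since it is a component of a $2$-bridge link. First I would record that $F'$ is incompressible in $E(L_{\beta/\alpha})$ (being a minimal genus Seifert surface) but compressible after filling the $K_1$-cusp, i.e.\ in $E(K_2)$: a surface of genus $g\ge 1$ cannot be incompressible while spanning the unknot $K_2$. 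By the standard fact that a surface incompressible in a link exterior yet compressible after a Dehn filling admits a compression along a disk meeting the core of the filling solid torus in a single point, $F'$ is \emph{meridionally} compressible along a disk meeting $K_1$ exactly once whenever its genus is positive. A single such meridional compression cuts an annulus $D\cap E(L_{\beta/\alpha})$ out of $F'$ and caps the two resulting circles by parallel copies of that annulus, leaving $\chi(F')$ unchanged while adding two meridian boundary components on $\partial N(K_1)$; since $\chi=2-2g-b$, the genus drops by exactly one. Iterating, after $g$ compressions we reach a planar surface $\Sigma'$ of genus $0$ carrying no further genus-reducing meridional compression (hence meridionally incompressible) and still essential, so Lemma \ref{KT} applies; capping the $2g$ meridian boundaries on $K_1$ recovers a disk $\Sigma$ in $S^3$ spanned by $K_2$ and meeting $K_1$ in $2g$ points.

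For the equality $g=\tfrac12\,w$, where $w$ is the wrapping number, I would prove that every disk spanned by $K_2$ meets $K_1$ in at least $2g$ points, so the count $2g$ attained by $\Sigma$ is minimal. Given any spanning disk $\Sigma_0$ of $K_2$ meeting $K_1$ transversely in $m$ points, the intersection $\Sigma_0\cap V$ is a planar surface whose $m$ meridian boundary circles on $\partial N(K_1)$ pair off by opposite orientation (as $lk(K_1,K_2)=0$ forces $m=2h$). Tubing each oppositely oriented pair by an annulus running parallel to $K_1$ inside $V$ yields an orientable Seifert surface for $K_2$ in $V$ of genus $h$; minimality of $g$ gives $g\le h=m/2$, i.e.\ $m\ge 2g$. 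Thus the minimal geometric intersection of a spanning disk of $K_2$ with $K_1$ equals $2g$, which is the wrapping number $w$, and $g=w/2$.

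Finally, to obtain $g=\sum|r_i|$ I would identify $\Sigma'$ with its Floyd--Hatcher edge-path. By Remark \ref{Dcero}, a surface with meridional boundary on $K_1$ and one boundary parallel to $K_2$ corresponds to a $D$-edge-path in $D_0$, and here $\rho=1$ while the number of meridian boundary components on $K_1$ equals $\mu=w$. The $D$-edge-path arising from $[s;2r_1,\dots,2r_k]$ passes through the successive partial-sum vertices, and by Remark \ref{girosizqder} the number of $D$-edges about the vertex $\beta_i/\alpha_i$ is $|2r_i|/2=|r_i|$, so the path has $\sum_i|r_i|$ $D$-edges in total. Because the winding number of $K_2$ is $0$, each $D$-edge records $K_2$ entering and then leaving a meridian disk of $V$, i.e.\ a pair of oppositely signed intersection points; hence $w=2\sum|r_i|$ and $g=w/2=\sum|r_i|$, which is the genus of $K(\alpha,\beta,p,q)$ by the preceding paragraph.

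I expect the main obstacle to be the last step: cleanly matching the compressed disk $\Sigma'$ to the $D$-edge-path in $D_0$ and justifying the factor of two (each $D$-edge contributing a cancelling pair of intersections with $K_1$), since this is where the topological wrapping number must be read off from the combinatorics of the diagram. A secondary point requiring care is verifying that the sequence of meridional compressions terminates at a surface that is \emph{both} essential and meridionally incompressible, so that the hypotheses of Lemma \ref{KT} are genuinely met at the end of the process rather than only at the start.
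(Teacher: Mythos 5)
Your overall strategy---iterated meridional compression down to a disk, an Euler-characteristic count showing each compression drops the genus by exactly one, a tubing argument for the lower bound, and an identification of the final surface with a $D$-edge-path in $D_0$---matches the paper's outline, and in places (the lower bound $m\geq 2g$, the explicit $\chi$ bookkeeping) you are more explicit than the paper. However, there is a genuine gap at the pivotal step. You assert, as a ``standard fact,'' that a surface which is incompressible in the link exterior but compressible after filling the $K_1$-cusp admits a compressing disk meeting the core $K_1$ in a \emph{single} point. No such general principle exists: a compressing disk for $F'$ in $E(K_2)$ can be isotoped to meet $N(K_1)$ in a minimal collection of meridian disks, but nothing forces that collection to have exactly one element, and ``meeting $K_1$ in one point'' is precisely the definition of meridional compressibility that has to be proved. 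The paper obtains it by a genuinely different mechanism: an induction on the pair $(g(F'),\vert Y(F')\vert)$ whose inductive step assumes $F'$ is meridionally \emph{in}compressible, applies Lemma \ref{KT} to put $F'$ in the Morse-theoretic normal form (M1)--(M4) with cancelling disks and essential saddles, and then runs the arguments of Lemma 21 of \cite{EMR} to reach a contradiction; the base case $(g,\vert Y\vert)\leq(2,4)$ is also Lemma 21 of \cite{EMR}. Your proof has no substitute for this argument, so the existence of even the first meridional compressing disk---and hence the whole compression sequence---is unjustified.

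A secondary issue, which you partly flag yourself: terminating the process at a surface that is both essential and meridionally incompressible (so that Lemma \ref{KT} genuinely applies at the end) does not follow from ``no further genus-reducing compression exists,'' since a meridional compression of the intermediate planar surface need not reduce genus in order to be essential. The paper's induction on $(g,\vert Y\vert)$ handles both the existence of each compression and the termination in one stroke, because each compression strictly decreases the pair ($g$ by one, $\vert Y\vert$ by two). Your final count of $D$-edges via Remarks \ref{Dcero} and \ref{girosizqder}, and the factor of two coming from oppositely signed intersections with $K_1$, is consistent with the paper's (very terse) closing sentence, but it does not repair the central gap.
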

\begin{proof}
We will proceed by induction on the pair $(g(F'), \vert Y(F')\vert)$. By Lemma 21 of  \cite{EMR} we know that a surface $S$ with $(g(S), \vert Y(S)\vert) \leq (2,4)$ meridionally compresses $g(S)$-times to a disk satifiying Lemma \ref{KT}.
Let us assume that the result is true for any  surface $S$ with  $(g(S), \vert Y(S)\vert) \leq (g(F'), \vert Y(F')\vert)$. Suppose that $F'$ is meridionally incompressible, we can apply Lemma \ref{KT}, and using the same arguements in Lemma 21 of \cite{EMR}, we  obtain a contradiction and thus $F'$ must be meridionally compressible. Moreover after performing the meridional compression a connected surface $F^2$ is obtained, and $g(F^2)= g(F')-1$ and  $\vert Y(F^2) \vert= \vert Y(F') \vert -2$. By  induction hypothesis $F^2$ compresses meridionally $g(F^2)$-times to a disk satisfying Lemma \ref{KT}. But $F^2$ was obtained by compression $F'$ once, thus $F'$ compresses meridionally $g(F')$-times to the required disk $\Sigma$. Thus $K_2$ spans $\Sigma$ which intersects meridionally $K_1$ in $2g(F')$ points, this implies that the wrapping number of $K_2$ in the solid torus $E(K_1)$ is equal to $2g(F')$. Now, to recover $F'$ from $\Sigma$ we must attached $g(F')$ tubes, therefore the last part of the statement is true.   
\end{proof}

Next we consider the case when $lk(K_1, K_2)\neq 0$.

Let $l$ and $m$ be a preferred longitude and a meridian for $\bd N(K_0)$, respectively. 
Notice that $\Delta(l, h)=pq$ and then $\Delta(f^{-1}(l), m_2)=pq$.
 
Let $F$ be a minimal genus Seifert surface for $K=K(\alpha, \beta; p, q)$, by Lemma \ref{BZ} the surface $F$ can be isotoped in such a way that $F \cap \bd N(K_0)$ consists of $\vert lk(K_1,K_2)\vert$ preferred longitudes and $F \cap (S^3 -N(K_0))$ is made of $\vert lk(K_1,K_2)\vert$ components which are Seifert surfaces for $K_0$. Let $\tilde F=F \cap N(K_0)$, notice that once we determine the genus of $\tilde F$ the genus of $F$ is obtained by adding $\vert lk(K_1, K_2)\vert$-times $(\vert p \vert-1)(\vert q\vert -1)/2$, which is the genus of the torus knot $K_0$.

The surface $F'= f^{-1}(\tilde F)$ is an incompressible surface spanned by $L_{\beta/ \alpha}=K_1\cup K_2$ whose boundary consists of one component in $K_2$ and $\vert lk(K_1, K_2)\vert$-boundary components in $K_1$.

\begin{lemma}
\label{lemaslopes}
The boundary slope of surface $F'$ in $K_2$ equals $-lk(K_1, K_2)^2pq$ and the boundary slope of $F'$ in $K_1$ equals $\frac{-1}{pq}$.
\end{lemma}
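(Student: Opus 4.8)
The plan is to compute the two boundary slopes separately: the slope on $K_1$ by directly identifying the boundary curves under the homeomorphism $f$, and the slope on $K_2$ by a homological computation in $H_1(E(L_{\beta/\alpha}))$.

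First I would pin down the boundary of $F'$ on $K_1$. By Lemma \ref{BZ}, $\tilde F = F\cap N(K_0)$ meets $\partial N(K_0)$ in $|lk(K_1,K_2)|$ preferred longitudes $l$ of $K_0$, so $\partial F'\cap\partial N(K_1)$ consists of $|lk(K_1,K_2)|$ parallel copies of $f^{-1}(l)$. To express $f^{-1}(l)$ in terms of the meridian $\mu_1$ and preferred longitude $\lambda_1$ of $K_1$, I would use two facts: since $K_1$ is trivial, $E(K_1)$ is a solid torus whose meridian disk is bounded by $\lambda_1$, and $f$ carries this solid torus homeomorphically onto $N(K_0)$, so $f(\lambda_1)$ bounds a meridian disk of $N(K_0)$, i.e. $f(\lambda_1)=m$; moreover by construction $f(m_2)=f(\mu_1)=h$, the regular fiber, which satisfies $[h]=pq[m]+[l]$ on $\partial N(K_0)$ (this is the content of $\Delta(l,h)=pq$ recalled above). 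Solving, $[l]=[h]-pq[m]=f(\mu_1)-pq\,f(\lambda_1)=f(\mu_1-pq\lambda_1)$, whence $f^{-1}(l)=\mu_1-pq\lambda_1$ and the boundary slope of $F'$ on $K_1$ is $-1/pq$.

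For the slope on $K_2$, after pushing the $K_2$-boundary of $F'$ out to $\partial N(K_2)$, the surface $F'$ is properly embedded in $E(L_{\beta/\alpha})$, so $[\partial F']=0$ in $H_1(E(L_{\beta/\alpha}))$. Writing $\ell=lk(K_1,K_2)$ and using the basis $\mu_1,\mu_2$ together with the standard link relations $\lambda_1=\ell\mu_2$ and $\lambda_2=\ell\mu_1$, the single $K_2$-boundary $c_2$ of slope $s$ has class $[c_2]=s\mu_2+\lambda_2=\ell\mu_1+s\mu_2$, while each $K_1$-boundary has class $[f^{-1}(l)]=\mu_1-pq\lambda_1=\mu_1-pq\ell\mu_2$. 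Since $F$ is connected and oriented, the $|\ell|$ longitudes are coherently oriented (this is exactly the situation of Lemma \ref{lemma:sillas}(2), the relevant edge-path being of $AD$-type by Corollary \ref{coro:AB-and-AD-edgepaths}, as the $K_1$-slope $-1/pq$ is non-integral), so their total class is an integer multiple $N(\mu_1-pq\ell\mu_2)$. Imposing $[\partial F']=0$ and reading off the $\mu_1$-coefficient forces $N=-\ell$, and the $\mu_2$-coefficient then yields $s=-\ell^2pq=-lk(K_1,K_2)^2pq$.

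The two linear-algebra extractions are routine; the step that needs the most care is the orientation bookkeeping in the last paragraph, namely justifying that the $|lk(K_1,K_2)|$ boundary curves on $K_1$ carry a single coherent orientation so that their homology classes add rather than cancel, which I would secure through Lemma \ref{lemma:sillas}(2) after recording that $F'$ is carried by an $AD$-edge-path. A secondary subtlety is the sign convention in $[h]=pq[m]+[l]$ and in $f(\lambda_1)=m$; I would fix orientations of $\mu_1,\lambda_1,m,l,h$ at the outset so that these identities hold on the nose, which is precisely what produces the stated signs $-1/pq$ and $-\ell^2pq$ rather than their absolute values.
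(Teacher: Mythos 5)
Your proposal is correct and takes essentially the same route as the paper's proof: the $K_1$ slope is obtained by identifying $f^{-1}(l)$ from $f(\lambda_1)=m$ and $f(\mu_1)=h=pq\,m+l$, and the $K_2$ slope by a homology computation in $E(L_{\beta/\alpha})$ using $\lambda_1=lk(K_1,K_2)\mu_2$ and $\lambda_2=lk(K_1,K_2)\mu_1$. If anything, your determination of the multiplicity $N$ from the $\mu_1$-coefficient of $[\partial F']=0$ is slightly more careful than the paper, which simply asserts that $\partial_1F'$ is homologous to $lk(K_1,K_2)\cdot(-pq\,l_1+m_1)$ without addressing the orientation bookkeeping.
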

\begin{proof}
Let $l_1$ and $m_1$ be the standard longitude and meridian of $K_1$ (chose any orientation of $K_1$) and let $\lambda$ and $\mu$ the longitude and meridian of $K_0$, the morphism $f: \partial E(K_1) \to \partial N(K_0)$ sends $m_1$ to $pq\mu + \lambda$ (which is the fiber of the Seifert fibration of $E(k_0)$) and $l_1$ to $\mu$ so, the longitude $\lambda$ is identified with $-pql_1 + m_1$ this means that the slope of $F'$ in $K_1$ is equals to $\frac{-1}{pq}$.

Let $\partial_2F'$ be the boundary of $F'$ on $K_2$ and $\partial_1F'$ be the one on $K_1$. It follows that $\partial_2F'$ is homological equivalent to $\partial_1F'$ on $E(L_{\beta/\alpha})$. Observe that the inclusion $\partial N(K_2) \to E(L_{\beta/\alpha})$ induces an injection between the first homological groups, so $\partial_1F'$ would be equivalent to only one class on $H_1(\partial N(K_2))$; that has to be $\partial_2F'$.

Now, let $l_2$ and $m_2$ be the standard longitude and meridian of $K_2$ and $lk = lk(K_1,K_2)$. In $E(L_{\beta/\alpha})$, $l_2$ is homological equivalent to $lk \cdot m_1$ (consider the disk bounded by $l_2$) and also $l_1$ is homological equivalent to $lk \cdot m_2$. Then, $\partial_1 F' \sim \partial_1 F' \sim lk\cdot (-pql_1 + m_1) = -pq\cdot lk \cdot l_1 + lk \cdot m_1 = -pq \cdot lk^2 \cdot m_2 + l_2 $, this implies that the boundary of $F'$ in $K_2$ is homological equivalent to $ -pq\cdot lk^2 \cdot m_2 + l_2$ i.e. its slope is $- pq \cdot lk^2$

\end{proof}

In order to find the minimal genus of $K=K(\alpha, \beta; p, q)$, first we need to determine the minimal genus of the surface $F'$ for the rational link $L_{\beta/\alpha}=K_1 \cup K_2$ with the above characteristics. That is to say, a surface $F'$ with one boundary component on $\partial N(K_2)$ and $\vert lk(K_1, K_2)\vert$-boundary components on $\partial N(K_1)$, with boundary slopes as in Lemma \ref{lemaslopes}, i.e, $\rho=1$ and $\mu=\vert pqlk(K_1, K_2)\vert$. Since $p, q \neq 1$, then $\mu\neq 1$ even if $\vert lk(K_1, K_1)\vert= 1$. Observe that if $pq \geq 0$ then the boundary slopes turned out to be negative, and if $pq \leq 0$ they are positive. In both cases, the path associated to the continued fraction expansion $[r; 2r_1,..., 2r_k]$ for $\beta/\alpha$, with $r=0$ or $1$ and $k$-odd, consists only of $A$ and $D-$type edges by Lemma \ref{lemma:sillas}. By Proposition \ref{genusAD} it is possible to compute the genus of the orientable surface carried by such path. Moreover, when $r=0$ the corresponding continued fraction is the one that gives rise to the surface with negative boundary slopes in both components of $L_{\beta/\alpha}$, by Corollary \ref{linkingnegativo}. When $r=1$ we obtain a surface with positive boundary slopes on both components of $L_{\beta/\alpha}$, by Corollary \ref{linkingpositivo}. Summarizing we have the following result.

\begin{theorem}
Let $L_{\beta/\alpha}=K_1 \cup K_2$ be the 2-bridge link given by the tunnel number one satellite knot $K(\alpha,\beta, p, q)$. Suppose $lk(K_1, K_2)\neq 0$. Then
\begin{enumerate}
\item If $0\leq \beta \leq \alpha$, $pq\geq 0$ and $[0;2n_1,...,2n_j]$ is the unique continued fraction for $\beta/\alpha$ with $j$ odd, the genus of $F'$ is:
\begin{equation*} 
\frac{1}{2} \Bigg[\displaystyle{\left(-1 + \sum_{ k \; odd}|n_k|\right) } \; (|lk(K_1,K_2)pq| - 1) + (j + 1) - (\vert lk(K_1, K_2)\vert+1)\Bigg]
\end{equation*}
where $k\in \{1, ..., j\}$
\item If $0\leq \beta \leq \alpha$, $pq\leq 0$ and $[1;2m_1,...,2m_i]$ is the unique continued fraction for $\beta/\alpha$ with $i$ odd, the genus of $F'$ is:
\begin{equation*} 
\frac{1}{2} \Bigg[\displaystyle{\left(-1 + \sum_{ h \; odd}|m_h|\right) } \; (|lk(K_1,K_2)pq| - 1) + (i + 1) - (\vert lk(K_1, K_2)\vert+1)\Bigg]
\end{equation*}
where $h\in \{1, ..., i\}$
\end{enumerate}
\end{theorem}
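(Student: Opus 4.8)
The plan is to obtain the formula as a direct specialization of Proposition~\ref{genusAD}, once the parameters $\rho$, $\mu$, $n$ and the correct continued fraction expansion have been pinned down. First I would record, from the discussion preceding the statement, that $F'=f^{-1}(\tilde F)$ is a connected, compact, essential, orientable surface spanned by $L_{\beta/\alpha}$ having a single boundary component on $K_2$, so $\rho=1$, together with $n=|lk(K_1,K_2)|$ boundary components on $K_1$. Lemma~\ref{lemaslopes} supplies the boundary slopes $-1/(pq)$ on $K_1$ and $-lk(K_1,K_2)^2pq$ on $K_2$; reading off the longitudinal sheet number these force $\mu=|pq\,lk(K_1,K_2)|$.

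Next I would confirm that $F'$ is carried by an $AD$-edge-path, so that Proposition~\ref{genusAD} applies rather than Proposition~\ref{genusAB} or the $A$-edge-path case. Because $K_0$ is a nontrivial torus knot we have $|p|\ge 2$ and $|q|\ge 2$, hence $|pq|\ge 4$ and $\mu=|pq\,lk(K_1,K_2)|>|lk(K_1,K_2)|=n$. Thus $\mu\neq n$, and by Lemma~\ref{lemma:sillas} the edge-path associated to $F'$ consists only of $A$- and $D$-type edges.

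The crux is to decide which of the two distinguished expansions $F_0=[0;2n_1,\dots,2n_j]$ and $F_1=[1;2m_1,\dots,2m_i]$ produces $F'$, since the two expansions generally differ in length and in the sum of the $|r_h|$, and therefore in the resulting genus. I would settle this by matching the sign of the slopes in Lemma~\ref{lemaslopes} against the slope computations of Corollaries~\ref{linkingnegativo} and~\ref{linkingpositivo}. When $pq\ge 0$ the slopes $-1/(pq)$ and $-lk(K_1,K_2)^2pq$ are negative, which agrees with the negative slopes $(\mu,-lk(K_1,K_2)\rho)$ and $(\rho,-lk(K_1,K_2)\mu)$ carried by the $AD$-edge-path of $F_0$; so $F'$ comes from $F_0$ in this regime. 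Dually, when $pq\le 0$ the slopes are positive and match those of Corollary~\ref{linkingpositivo}, so $F'$ comes from $F_1$. I expect this sign bookkeeping---reconciling the $(\mu,v\rho)$ presentation of the slopes with the reduced fractions $-1/(pq)$ and $-lk(K_1,K_2)^2pq$ under the absolute-value conventions used throughout---to be the only genuinely delicate point.

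Finally I would substitute into the formula of Proposition~\ref{genusAD}, namely $\tfrac12\big[(-1+\sum_{h\ odd}|r_h|)(|\mu|-1)+(k+1)-(n+1)\big]$. For case (1), setting $k=j$, $r_h=n_h$, $n=|lk(K_1,K_2)|$ and $|\mu|=|pq\,lk(K_1,K_2)|$ yields the first displayed expression; case (2) is identical after replacing $j$ and $n_h$ by $i$ and $m_h$ and using $F_1$. This completes the argument.
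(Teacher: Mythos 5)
Your proposal is correct and follows essentially the same route as the paper: identify $F'$ via Lemma \ref{BZ}, read off $\rho=1$ and $\mu=|pq\,lk(K_1,K_2)|$ from Lemma \ref{lemaslopes}, rule out all but $AD$-edge-paths using Lemma \ref{lemma:sillas}, select $F_0$ or $F_1$ by matching slope signs with Corollaries \ref{linkingnegativo} and \ref{linkingpositivo}, and substitute into Proposition \ref{genusAD}. Your explicit observation that $|pq|\ge 4$ forces $\mu>n$ is a slightly more direct justification of the $AD$ case than the paper's, but the argument is the same in substance.
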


\begin{corollary}
Let $K=K(\alpha,\beta, p, q)$ be a tunnel number one satellite knot, the genus of $K$ is:
\begin{equation*}
g(K)= g(F')+ \vert lk(K_1, K_2)\vert \frac{(\vert p\vert-1)(\vert q \vert-1)}{2}
\end{equation*}

\end{corollary}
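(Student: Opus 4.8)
The plan is to read off $g(K)$ from the decomposition of Lemma \ref{BZ} by a purely additive Euler-characteristic count. First I would record the genus--Euler-characteristic dictionary for the connected orientable surfaces involved: as a Seifert surface for the knot $K$, $F$ is connected with a single boundary component, so $\chi(F)=1-2g(K)$; and $F'$ is connected with $1+|lk(K_1,K_2)|$ boundary components (one on $K_2$ and $|lk(K_1,K_2)|$ on $K_1$), so $\chi(F')=1-|lk(K_1,K_2)|-2g(F')$. I would also recall the classical fact that the Seifert genus of the torus knot $K_0=T(p,q)$ equals $(|p|-1)(|q|-1)/2$.

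Next I would split $F$ along $\partial N(K_0)$ into $\tilde F=F\cap N(K_0)$ and the external piece $F\cap\overline{(S^3-N(K_0))}$, which by Lemma \ref{BZ} meet precisely in the $|lk(K_1,K_2)|$ preferred longitudes on $\partial N(K_0)$. Since $\chi(S^1)=0$, Euler characteristic is additive across this gluing, so $\chi(F)=\chi(\tilde F)+\chi(\mathrm{external})$. The homeomorphism $f\colon E(K_1)\to N(K_0)$ carries $F'$ onto $\tilde F$, whence $\chi(\tilde F)=\chi(F')$, while the external piece is a disjoint union of $|lk(K_1,K_2)|$ Seifert surfaces for $K_0$, each with a single longitudinal boundary component and, as argued below, of minimal genus $(|p|-1)(|q|-1)/2$, giving $\chi(\mathrm{external})=|lk(K_1,K_2)|\,(1-(|p|-1)(|q|-1))$. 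Substituting and using the dictionary of the first step, the equality $1-2g(K)=(1-|lk(K_1,K_2)|-2g(F'))+|lk(K_1,K_2)|(1-(|p|-1)(|q|-1))$ has its $\pm|lk(K_1,K_2)|$ terms cancel, and dividing by $-2$ collapses it exactly to the asserted formula $g(K)=g(F')+|lk(K_1,K_2)|\,(|p|-1)(|q|-1)/2$.

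The step needing genuine justification --- the main obstacle --- is that each external component is a \emph{minimal} genus Seifert surface for $K_0$, so that its genus is $(|p|-1)(|q|-1)/2$ rather than something larger. This I would argue from the minimality of $F$ by a standard cut-and-paste: were some external component of non-minimal genus, replacing it by a minimal genus Seifert surface for $K_0$ bounded by the same longitude and regluing along $\partial N(K_0)$ would yield a Seifert surface for $K$ of strictly smaller genus, contradicting the choice of $F$; and no Seifert surface for $K_0$ has genus below $(|p|-1)(|q|-1)/2$. The remaining points --- that $\partial F$ lies inside $N(K_0)$, so the external piece contributes no boundary beyond the gluing longitudes, and that $\tilde F$ is connected because $F'$ is --- are routine.
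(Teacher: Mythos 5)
Your proposal is correct and follows essentially the same route as the paper: the paper obtains the corollary directly from Lemma \ref{BZ} by splitting a minimal genus Seifert surface $F$ along $\partial N(K_0)$ into $\tilde F\cong F'$ and $\vert lk(K_1,K_2)\vert$ Seifert surfaces for the torus knot, then adding $\vert lk(K_1,K_2)\vert\cdot(\vert p\vert-1)(\vert q\vert-1)/2$ to $g(F')$, which is exactly your Euler-characteristic count in compressed form. The one point you elaborate beyond the paper --- the cut-and-paste argument (with parallel copies of the fiber surface, to keep the external pieces disjoint) showing the external components are \emph{minimal} genus Seifert surfaces for $K_0$ --- is left implicit in the paper's appeal to \cite{BZ}, and your justification of it is sound.
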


We can also determined when a satellite tunnel number one knot $K=K(\alpha,\beta, p, q)$ is fibered, if $lk(K_1, K_2)\neq 0$. Recall that the $(p,q)$-torus knot $K_0$ is fibered. A surface $F$ for $K$ is broken into pieces: $\tilde{F}=F\cap \partial N(K_0)$ and $\vert lk(K_1, K_2)\vert$ components which are Seifert surfaces for $K_0$. These pieces are glued along a fiber of the Seifert fibration of the knot $K_0$. Thus, if $F'=f^{-1}(\tilde F)$ is a fiber of a fibering of $S^3-L_{\beta/\alpha}\rightarrow S^1$ then $F$ will be a fiber of a fibering $S^3-K \rightarrow S^1$. Theorem \ref{fibering} part (1) gives us the condition to recognize when $F'$ is a fiber for $L_{\beta/\alpha}$.

\begin{proposition}
A tunnel number one satellite knot $K(\alpha,\beta, p, q)$, where $lk(K_1, K_2)\neq 0$, is fibered if and only if $\beta/\alpha$ has a continued fraction expansion or type $[r; 2r_1,2\epsilon_1, 2r_3, ...,2\epsilon_k, 2r_k]$, with $r=0$ or $1$, $\vert \epsilon\vert=1$ and $k$-odd. 
\end{proposition}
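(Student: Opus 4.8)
The plan is to deduce the proposition by combining the gluing construction described in the paragraph preceding the statement with the fibering criterion of Theorem \ref{fibering}(1); the only genuinely new ingredient is the converse of that gluing implication. Throughout I would work with a minimal genus Seifert surface $F$ for $K$, decomposed via Lemma \ref{BZ} as $\tilde F = F\cap N(K_0)$ together with $|lk(K_1,K_2)|$ Seifert surfaces for the companion torus knot $K_0$, meeting $\partial N(K_0)$ in $|lk(K_1,K_2)|$ preferred longitudes. Under $f^{-1}$ the piece $\tilde F$ becomes the $AD$-edge-path surface $F'$ of Lemma \ref{lemaslopes}, whose boundary on $K_1$ has slope $-1/pq$; note $f$ identifies that slope with the longitude $\lambda$ of $K_0$.

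For the ``if'' direction, suppose $\beta/\alpha$ admits an expansion $[r;2r_1,2\epsilon_1,2r_3,\dots,2\epsilon_k,2r_k]$ with $r\in\{0,1\}$, $|\epsilon_i|=1$ and $k$ odd. Since $lk(K_1,K_2)\neq 0$ the surface $F'$ is carried by an $AD$-edge-path, so Theorem \ref{fibering}(1) applies and shows that $F'$ is a fiber of a fibering $S^3-L_{\beta/\alpha}\to S^1$. Transporting this fibering through $f$ yields a fibering of $N(K_0)-K$ with fiber $\tilde F$ whose restriction to $\partial N(K_0)$ is by longitudes; gluing it to the fibering of $E(K_0)$ obtained by composing the torus-knot fibration with a degree-$|lk(K_1,K_2)|$ self-covering of $S^1$, whose fiber meets $\partial N(K_0)$ in the same $|lk(K_1,K_2)|$ longitudes, produces a fibering of $S^3-K$ with fiber $F$. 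This is precisely the implication recorded before the statement; the only point requiring care is that the two fibrations agree on $\partial N(K_0)$, which holds because both restrict there to the slope $\lambda$ identified by Lemma \ref{lemaslopes} with the $K_1$-boundary slope of $F'$.

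For the ``only if'' direction, assume $K$ is fibered, so $S^3-K$ fibers over $S^1$ with fiber the minimal genus surface $F$. I would isotope the essential torus $\partial N(K_0)$ into a position compatible with this fibering, using its incompressibility together with the standard fact that an essential torus in a surface bundle can be made transverse to the fibers and to split the bundle into two fibered pieces. The decomposition of $F$ furnished by Lemma \ref{BZ} is then realized by the fibration: $\tilde F$ is a fiber of an induced fibering of $N(K_0)-K$, whence $F'=f^{-1}(\tilde F)$ is a fiber of a fibering $S^3-L_{\beta/\alpha}\to S^1$. Applying Theorem \ref{fibering}(1) in the reverse direction then forces $\beta/\alpha$ to have a continued fraction expansion of the stated form, with the parities $r\in\{0,1\}$ and $k$ odd coming from the two canonical expansions used throughout and the boundary-slope computations of Corollaries \ref{linkingnegativo} and \ref{linkingpositivo}.

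The main obstacle is exactly this converse of the gluing step: one must guarantee that a fibering of $S^3-K$ genuinely induces a fibering of the pattern region $N(K_0)-K\cong_f S^3-L_{\beta/\alpha}$, rather than merely a decomposition of the fiber surface. I expect to handle it by invoking the uniqueness of the fiber in a fibered knot complement (so that $F$ is, up to isotopy, the decomposed surface of Lemma \ref{BZ}) and the behaviour of incompressible tori with respect to fibrations, which reduces the matching of monodromies across $\partial N(K_0)$ to the already-verified agreement of boundary slopes. The remaining verifications are routine once it is known that the induced surface is the $AD$-edge-path surface to which Theorem \ref{fibering}(1) applies.
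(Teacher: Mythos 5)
Your argument follows the paper's route exactly: decompose the fiber $F$ along the companion torus via Lemma \ref{BZ}, identify $F'=f^{-1}(\tilde F)$ as the $AD$-edge-path surface with the boundary slopes of Lemma \ref{lemaslopes}, and invoke Theorem \ref{fibering}(1). The paper itself only spells out the gluing (``if'') direction in the paragraph preceding the statement and leaves the converse to the standard fact that a fibered satellite of nonzero winding number has fibered companion and pattern; your more explicit treatment of that converse (splitting the fibration along the essential torus $\partial N(K_0)$) is precisely what the paper implicitly relies on, so the two arguments agree in substance.
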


\subsection{Torti-rational knots}
Let $L_{\beta/ \alpha}= K_1 \cup K_2$ be a 2-bridge in $S^3$. Since $K_1$ is a trivial knot in $S^3$, $K_2$ can be considered as a knot in an unknotted solid torus $V$ and $K_1$ a meridian of $V$. Then by applying Dehn twists along $K_1$ in an arbitrary number of times, say $r$, we obtain a new knot $K$ from $K_2$. We call this knot a torti-rational knot and it is denoted by $K(\beta/\alpha; r)$, in particular it is contained in $V$. Let $F$ be a minimal genus Seifert surface for $K(\beta/\alpha; r)$ of genus $g$. Consider the case when $lk(K_1,K_2)=0$, we need a result that shows $F\subset V$, and this will let us compute the genus of $F$ as in the case of satellite tunnel number one knots.

\begin{lemma}
Let $F$ be a minimal genus Seifert surface for the torti-rational knot $K(\beta/\alpha; r)$. Suppose $lk(K_1, K_2)=0$, then $F\subset V$.
\end{lemma}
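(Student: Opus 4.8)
The plan is to realize $V$ as one of the two complementary solid tori determined by the unknot $K_1$ and to push a minimal genus Seifert surface off the separating torus $T=\bd V$. Write $W=\overline{S^3\setminus V}$ for the complementary solid torus, whose core is $K_1$, so that $T=\bd V=\bd W$ and $K\subset\operatorname{int}V$. The key numerical input is that the winding number of $K$ in $V$ equals $lk(K_1,K_2)$: a meridian disk of $V$ is cut out of a disk bounded by the unknot $K_1$, and the Dehn twists along $K_1$ act trivially on $H_1(V)$, so they do not change this winding number. Hence the hypothesis $lk(K_1,K_2)=0$ says precisely that $[K]=0$ in $H_1(V)$. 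First I would isotope $F$ so that it meets $T$ transversely with $|F\cap T|$ minimal in its isotopy class; since $F$ has minimal genus it is incompressible in $E(K)$. The goal is to show $F\cap T=\varnothing$, whence $F\subset V$ because $\bd F=K\subset\operatorname{int}V$; so I assume for contradiction that $F\cap T\neq\varnothing$.

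The set $F\cap T$ is a family of disjoint simple closed curves on $T$. A standard innermost-disk argument removes the inessential ones: an innermost trivial curve bounds a disk in $T$ with interior disjoint from $F$, incompressibility of $F$ forces its other side to bound a disk in $F$, and irreducibility of $S^3$ then lets me isotope $F$ to lower $|F\cap T|$, contradicting minimality. Thus all curves of $F\cap T$ are essential on $T$ and therefore mutually parallel of a single slope $\gamma$. Next I would show that $F_W:=F\cap W$ is incompressible and $\bd$-incompressible in the solid torus $W$: any compression or $\bd$-compression of $F_W$ inside $W$ either compresses $F$ (impossible) or yields an isotopy of $F$ reducing $|F\cap T|$ (again impossible by minimality). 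By the classification of essential surfaces in a solid torus, every component of $F_W$ is then a meridian disk of $W$ or a $\bd$-parallel annulus; a $\bd$-parallel annulus can be isotoped across $T$ to reduce $|F\cap T|$, so minimality forces every component of $F_W$ to be a meridian disk of $W$. In particular $\gamma=m_W$, the meridian of $W$ (equivalently a longitude of $V$).

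This is where the hypothesis $lk(K_1,K_2)=0$ becomes essential. Orient $F$; since $\bd F=K\subset V$, the oriented multicurve $F\cap T$ is exactly $\bd(F\cap V)$ up to sign, so its class in $H_1(V)$ equals $[K]=0$. Writing $s$ for the signed number of meridian disks of $F_W$, the class of $F\cap T$ in $H_1(V)$ is $s$ times a generator, so $s=0$; consequently both orientations occur, and some two cyclically adjacent curves of $F\cap T$ carry opposite orientations. Their bounding meridian disks $D_1,D_2\subset W$, together with the annulus $A_0\subset T$ they cobound, form a $2$-sphere bounding a ball $B\subset W$ whose interior is disjoint from $K$ and, for an innermost such pair, from the rest of $F$. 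Isotoping $F$ across $B$ replaces $D_1\cup D_2$ by $A_0$ pushed slightly into $V$; because the two disks are oppositely oriented this respects the orientation of $F$ and removes two intersection curves, contradicting minimality. Hence $F\cap T=\varnothing$ and $F\subset V$.

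The main obstacle is the middle step: verifying that $F_W$ is incompressible and $\bd$-incompressible so that the solid-torus classification applies, which requires translating each hypothetical compression or $\bd$-parallel annulus into a genuine reduction of $|F\cap T|$. The role of the hypothesis is equally delicate and is the crux of the argument: without $lk(K_1,K_2)=0$ one would have $s=\pm\,lk(K_1,K_2)\neq0$, the meridian disks would not cancel in pairs, and $F$ would genuinely be forced out of $V$. This is exactly the winding-number obstruction, and it parallels the way Lemma \ref{BZ} confines the Seifert surface in the satellite case.
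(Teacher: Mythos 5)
Your argument is sound up to and including the conclusions that every component of $F_W$ is a meridian disk of $W$ and that the signed count $s$ of the curves of $F\cap T$ vanishes, but the final cancellation step contains a genuine error: replacing $D_1\cup D_2$ by the annulus $A_0$ is not an isotopy of $F$. You are excising two disks (total Euler characteristic $2$) and gluing in an annulus (Euler characteristic $0$) along the same boundary, so the resulting surface $F'$ has $\chi(F')=\chi(F)-2$. This is a tubing operation that raises the genus by one (or splits off a closed component, which only makes matters worse), so it contradicts neither the minimality of the genus nor the minimality of $\vert F\cap T\vert$ within the isotopy class. The Whitehead link shows that the configuration you arrive at is genuinely realizable and cannot be dismissed by such a move: taking $K_2$ to be one component (winding number $0$ in $V=E(K_1)$), its spanning disk must meet $T=\partial N(K_1)$, and after minimizing it meets $T$ in exactly two oppositely oriented meridians of $W$, with $F_W$ two meridian disks and $s=0$; yet the disk cannot be pushed into $V$ because the link is not split, and your move merely converts it into a once-punctured torus lying in $V$. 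So the entire content of the lemma is to rule out the residual configuration of algebraically cancelling meridian disks, and that cannot be done by the homological and innermost-disk bookkeeping alone.

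This is also where your route diverges from the paper's. The paper does not attempt a cut-and-paste cancellation: it isotopes $F$ so that $F\cap W$ consists of $n$ meridian disks, undoes the $r$ Dehn twists so that $F\cap V$ becomes an essential spanning surface $F'$ of the two-bridge link with one boundary component on $K_2$ and $n$ boundary components of slope $1/r$ on $K_1$, applies Lemma \ref{lemma:signsum-and-linking} to conclude $\vert\Sigma_v\vert=0$, and then invokes Lemma \ref{lemma:sillas} (together with the $\mu=1$ case) to show that no admissible saddle sequence for such an essential surface is compatible with $\Sigma_v=0$; hence $\mu=0$ and $F\cap\partial V=\emptyset$. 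To complete your more elementary approach you would need a genuine substitute for this last step, i.e.\ an argument that the essential spanning surface of the link obtained from $F\cap V$ cannot exist when the boundary curves on $K_1$ cancel algebraically --- which is exactly what the paper's saddle analysis supplies.
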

\begin{proof}
Assume that $F\cap \bd V \neq  \emptyset$, $F$ can be isotoped to intersect $\bd V$ in $n$-longitudes and $F \cap (S^3-V)$ consisting of $n$-disjoint disks. Let $\tilde{F}=F\cap V$, after undoing the $r$-Dehn twists along $K_1$, an essential spanning surface $F'$ for $K_1\cup K_2$ is obtained. The surface $F'$ has one boundary component $\bd_2 F'$ parallel to $K_2$ and $n$- boundary components $\bd_1 F'$ of slope $1/r$. Lemma \ref{lemma:signsum-and-linking} states that $\vert 1/r \Sigma_v\vert= \vert lk(K_1, K_2)\vert$, then we have that $\vert \Sigma_v\vert=0$. In particular the boundary components of $F$ along $K_1$ have different orientations. Lemma \ref{lemma:sillas} implies that if $\mu>1$ and if a $B$-type saddle occurs then $\vert \Sigma_v \vert =1$, which is a contradiction. Or if a $D$-type saddle appears then all boundary components of $F'$ have the same orientation, which is not true. If $\mu=1$ then $\vert \Sigma_v \vert =1$, but it equals zero. Thus $\mu=0$, implying that $F'$ does not have boundary components on $K_1$, applying the $r$-Dehn twist we recover $F$ which is contained in $V$. 

\end{proof}

Similarly to Lemma \ref{meri-comp}, the surface $F'$ can be compressed meridionally $g$-times to obtain a disk satisfying the conditions of Lemma \ref{KT}. Thus we have the following result.

\begin{proposition}
Let $F$ be minimal Seifert genus surface for the torti-rational knot $K(\beta/\alpha; r)$ such that $lk(K_1,K_2)=0$. The genus $g$ of $F$ is equal to one half the wrapping number of $K_2$ with respect to $E(K_1)$.
\end{proposition}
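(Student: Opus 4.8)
The plan is to reduce the statement to the satellite computation already carried out in Lemma~\ref{meri-comp}, using the preceding lemma as the starting point. First I would invoke that lemma to place $F$ entirely inside the unknotted solid torus $V$. Because $F\cap\partial V=\emptyset$, undoing the $r$ Dehn twists along $K_1$ is a homeomorphism of $V$, which we identify with $E(K_1)$, carrying the torti-rational knot $K(\beta/\alpha;r)$ back to $K_2$ and carrying $F$ to a surface $F'$. Since a homeomorphism preserves genus, $F'$ is an incompressible genus $g$ Seifert surface for $K_2$ in $E(K_1)$, with a single boundary component parallel to $K_2$ and no boundary on $K_1$ (so $\rho=1$ and $\mu=0$ on $K_1$). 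This is exactly the setting of Lemma~\ref{meri-comp}.

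Next I would run the same meridional-compression induction. Proceeding by induction on the pair $(g(F'),|Y(F')|)$, the base case $(g,|Y|)\le(2,4)$ is Lemma~21 of \cite{EMR}. For the inductive step, suppose $F'$ were meridionally incompressible; then Lemma~\ref{KT} applies and gives $|Y(F')|=2-(\chi(F')+|\partial F'|)$ together with the saddle and cancelling-disk structure, and repeating the complexity argument of \cite{EMR} produces a contradiction. Hence $F'$ is meridionally compressible. A single meridional compression yields a connected surface $F^2$ with $g(F^2)=g(F')-1$ and $|Y(F^2)|=|Y(F')|-2$; by the inductive hypothesis $F^2$ compresses meridionally $g(F')-1$ more times to a disk satisfying Lemma~\ref{KT}. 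Therefore $F'$ compresses meridionally exactly $g(F')$ times to such a disk $\Sigma$.

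Finally I would convert this into the wrapping-number count exactly as in Lemma~\ref{meri-comp}. The disk $\Sigma$ is spanned by $K_2$ and meets $K_1$ meridionally in $2g(F')$ points; recovering $F'$ from $\Sigma$ requires attaching $g(F')$ tubes, each pairing two of these points. Thus the minimal meridional intersection of $K_2$ with a meridian disk of $E(K_1)$ is $2g(F')$, i.e.\ the wrapping number of $K_2$ in $E(K_1)$ equals $2g(F')=2g$, which gives $g=\tfrac12\,\mathrm{wrap}(K_2,E(K_1))$ as claimed.

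The step I expect to be the main obstacle is the inductive one that forces meridional compressibility: one has to reproduce, in this zero-linking torti-rational setting, the full strength of Lemma~\ref{KT} and then extract the contradiction from the hypothesis of meridional incompressibility precisely as in Lemma~21 of \cite{EMR}. Everything else --- transporting $F$ across the Dehn-twist homeomorphism and tallying the $2g$ intersection points --- is routine bookkeeping, and the orientation obstruction supplied by Lemmas~\ref{lemma:signsum-and-linking} and~\ref{lemma:sillas} is what guarantees at the outset that $F$ does indeed lie in $V$.
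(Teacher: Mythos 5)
Your proposal is correct and follows essentially the same route as the paper: the paper likewise first uses the preceding lemma to place $F$ inside $V$, transports it across the (now trivial) Dehn twists to a Seifert surface for $K_2$ in $E(K_1)$, and then states that ``similarly to Lemma~\ref{meri-comp}'' the surface compresses meridionally $g$ times to a disk satisfying Lemma~\ref{KT}, yielding $g=\tfrac12\,\mathrm{wrap}(K_2,E(K_1))$. You have simply written out in full the induction that the paper invokes by reference.
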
 

Now consider the case $lk(K_1, K_2)\neq 0$, then $F\cap \bd V \neq \emptyset$.
We will determine the genus of $F$ in terms of the parameters $\beta, \alpha, r$ and $lk(K_1, K_2)$.

\begin{theorem}
Let $K(\beta/\alpha; r)$ be a torti-rational knot and $F$ a minimal genus Seifert surface for it. Suppose that $lk(K_1, K_2)\neq 0$. Then:
\begin{enumerate}
\item If $r > 1$ and $[1;2m_1,...,2m_i]$  is the unique continued fraction for $\beta/\alpha$ with $i$ odd, the genus of $F$ is:
\begin{equation*} 
\frac{1}{2} \Bigg[\displaystyle{\left(-1 + \sum_{ h \; odd}|m_h|\right) } \; (|lk(K_1,K_2)r| - 1) + (i + 1) - (\vert lk(K_1, K_2)\vert+1)\Bigg]
\end{equation*}
where $h\in \{1, ..., i\}$

\item If $r < 1$ and $[0;2n_1,...,2n_j]$ is the unique continued fraction for $\beta/\alpha$ with $j$ odd, the genus of $F$ is:
\begin{equation*} 
\frac{1}{2} \Bigg[\displaystyle{\left(-1 + \sum_{ k \; odd}|n_k|\right) } \; (|lk(K_1,K_2)r| - 1) + (j + 1) - (\vert lk(K_1, K_2)\vert+1)\Bigg]
\end{equation*}
where $k\in \{1, ..., j\}$

\item If $\vert r \vert  > 1 $ and $\vert lk(K_1,K_2)\vert > 1$. Let  
$[s;2r_1,...,2r_k]$  be the continued fraction expansion for $\beta/\alpha$ with $s=0$ or $1$ such that  $k\geq 3$ and  $\vert r_t \vert \geq 2$ for all $t$. The genus of $F$ is:
\begin{equation*}
    1+ \frac{(\vert lk(K_1, K_2)\vert+1)(k-3)}{4}
\end{equation*}

\item If $\vert r \vert =1$ and $\vert lk(K_1,K_2)\vert = 1$ and $[0;2n_1,...,2n_j]$ and $[1;2m_1,...,2m_i]$ are the continued fraction for $\beta/\alpha$ with $j,i$ odd. The genus of $F$ is:
\begin{equation*}
    min \Bigg(\frac{i-1}{4}, \frac{j-1}{4}\Bigg)
\end{equation*}

\end{enumerate}
\end{theorem}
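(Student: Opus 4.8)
The plan is to reduce the computation of $g(F)$ to the genus of an essential surface $F'$ for the link $L_{\beta/\alpha}$, exactly as in the satellite case, and then to read off the answer from Propositions~\ref{genusAD} and \ref{genusAB} together with the slope computations of Corollaries~\ref{linkingnegativo} and \ref{linkingpositivo}. Since $lk(K_1,K_2)\neq 0$ forces $F\cap\partial V\neq\emptyset$, I would first isotope $F$ and undo the $r$ Dehn twists along $K_1$ to produce a connected, compact, essential, orientable surface $F'\subset S^3-L_{\beta/\alpha}$ with one boundary component parallel to $K_2$ (so $\rho=1$) and $n$ non-meridional boundary components on $K_1$ of slope $1/r$. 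A short Euler-characteristic bookkeeping, analogous to the satellite computation but now with \emph{no} torus-knot pieces to add, then gives $g(F)=g(F')$, so the problem reduces entirely to computing $g(F')$ in terms of $\beta/\alpha$, $r$ and $lk(K_1,K_2)$.

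Next I would identify the edge-path type. By Lemma~\ref{lemma:signsum-and-linking} the slope $1/r$ yields $|\Sigma_v|=|lk(K_1,K_2)\,r|$ for every $v\in K_1\cap S^2_\epsilon$, and by Corollary~\ref{coro:AB-and-AD-edgepaths} the associated minimal edge-path is of type $AD$, $AB$, or $A$. Lemma~\ref{lemma:sillas} is the decisive switch: a $D$-type saddle forces $|\Sigma_v|=\mu$, hence $\mu=|lk(K_1,K_2)\,r|$ and $n=|lk(K_1,K_2)|$, whereas a $B$-type saddle forces $|\Sigma_v|=1$, hence $|lk(K_1,K_2)\,r|=1$. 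This already separates the regimes: when $|lk(K_1,K_2)\,r|>1$ the path must be of type $AD$, and only when $|lk(K_1,K_2)|=|r|=1$ can it be of type $A$ or $AB$.

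For parts (1) and (2) I would feed $\mu=|lk(K_1,K_2)\,r|$ and $n=|lk(K_1,K_2)|$ into Proposition~\ref{genusAD}; the sign of $r$ (equivalently of the slope $1/r$) selects the continued fraction through Corollaries~\ref{linkingnegativo} and \ref{linkingpositivo}. Concretely, $r>1$ produces positive boundary slopes and the expansion $[1;2m_1,\dots,2m_i]$, while $r<1$ produces negative slopes and $[0;2n_1,\dots,2n_j]$, giving the two displayed formulas verbatim. For part (3), where $|r|>1$ and $|lk(K_1,K_2)|>1$, the competing candidate is the surface carried by a minimal $AB$-edge-path; this surface is available precisely when $\beta/\alpha$ admits the expansion $[s;2r_1,\dots,2r_k]$ with $s\in\{0,1\}$ and $|r_t|\ge 2$ for all $t$, and its genus is given by Proposition~\ref{genusAB} with $n=|lk(K_1,K_2)|$, namely $1+\tfrac{(|lk(K_1,K_2)|+1)(k-3)}{4}$. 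The key qualitative point is that this value is \emph{independent} of $r$ and hence undercuts the $AD$-value of (1)/(2) once $|r|$ is large. Finally, part (4) is the degenerate collapse $\mu=|lk(K_1,K_2)\,r|=1$, in which the path is an $A$-edge-path and both expansions $[0;\dots]$ and $[1;\dots]$ are legitimate; here a direct Euler-characteristic count for the single-component surface $F$ gives $\min\!\big(\tfrac{i-1}{4},\tfrac{j-1}{4}\big)$, the minimum recording that we are free to run the construction on whichever expansion is shorter.

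The hard part will be the minimality statement, not any individual genus count. I must argue that among all surfaces obtained by re-twisting an essential surface for $L_{\beta/\alpha}$, the ones listed actually realize the minimal genus, and in particular that in the overlapping regime $|r|>1,\ |lk(K_1,K_2)|>1$ the $r$-independent value of (3) is genuinely smaller than the $AD$-value. This breaks into: (a) a reduction showing that every minimal-genus Seifert surface for $K$ restricts, after undoing the twists, to a branched-surface-carried essential surface of the type analyzed in Section~\ref{general results}, so that no competitor of smaller genus is overlooked; (b) an orientability check guaranteeing that the relevant edge-path is an $A_iX$-edge-path, so that the formulas of Section~\ref{fractions and genus} apply; and (c) the elementary comparison of the two closed-form expressions. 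I expect step (a) to be the real obstacle, to be handled by the incompressibility and $\partial$-incompressibility arguments of Floyd--Hatcher together with the meridional-incompressibility analysis underlying Lemma~\ref{KT}.
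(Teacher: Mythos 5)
Your reduction and your treatment of parts (1), (2) and (4) follow the paper's own proof essentially verbatim: undo the $r$ twists to obtain $F'$ with $\rho=1$ and $n$ boundary components of slope $1/r$ on $K_1$, observe $g(F)=g(F')$ by capping off, use Lemma \ref{lemma:signsum-and-linking} and Corollary \ref{coro:AB-and-AD-edgepaths} to pin down the edge-path type, and then quote Propositions \ref{genusAD} and \ref{genusA} together with Corollaries \ref{linkingnegativo} and \ref{linkingpositivo} to match the sign of $r$ with the choice of continued fraction expansion. To that extent the proposal is the paper's argument.

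The genuine problem is part (3), and your write-up is internally inconsistent exactly there. You correctly deduce from Lemmas \ref{lemma:sillas} and \ref{lemma:signsum-and-linking} that a $B$-type saddle forces $|\Sigma_v|=1$, hence $|lk(K_1,K_2)\,r|=1$, and you conclude that an $A$- or $AB$-edge-path can occur only when $|lk(K_1,K_2)|=|r|=1$. Two sentences later you offer an $AB$-edge-path as the ``competing candidate'' in the regime $|r|>1$, $|lk(K_1,K_2)|>1$, which your own dichotomy has just excluded: the boundary of $F'$ has slope $1/r$, non-integral for $|r|>1$, whereas Lemma \ref{lemma:sillas}(1) makes the $K_1$-boundary of any $AB$-carried surface longitudinal. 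You would need to explain why a surface with the wrong boundary slope on $K_1$ is a legitimate competitor for the minimal genus Seifert surface of $K(\beta/\alpha;r)$ at all before Proposition \ref{genusAB} can be applied. For what it is worth, the paper's proof of (3) is equally terse --- it asserts that $|r|>1$ gives $\mu=n$ and invokes Proposition \ref{genusAB} directly, and it never carries out the comparison between the $AD$- and $AB$-values that you promise in your step (c) --- so the minimality question you flag as ``the hard part'' is real, but your plan for closing it starts from a premise that contradicts your own case analysis.
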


\begin{proof}
The surface $F$ can be isotoped to intersect $\bd V$ in $n$-longitudes and $F \cap (S^3-V)$ consisting of $n$-disjoint disks. Let $\tilde{F}=F\cap V$, after undoing the $r$-Dehn twists along $K_1$, an essential spanning surface $F'$ for $K_1\cup K_2$ is obtained. The surface $F'$ has one boundary component $\bd_2 F'$ parallel to $K_2$ and $n$- boundary components $\bd_1 F'$ of slope $1/r$. If we determine the genus of $F'$ it will be the genus of $F$. By performing the corresponding $r$-Dehn twists along $K_1$ we recover $\tilde{F}$, after capping of the $n$-boundary components of $\tilde{F}$ we have $F$, thus $F$ and $F'$ have the same genus.

For the essential surface $F'$, $\rho=1$ and $\mu= \vert r \vert n$. By the formula of Lemma \ref{lemma:signsum-and-linking} we get $n=\vert lk(K_1, K_2)\vert$. The surface $F'$ corresponds to some edge-path $\gamma$ on a $D_t$ diagram, since $n,lk(K_1, K_2), \rho \neq 0$ then $t\neq 0, \infty$. If $\mu > 1$, Corollary \ref{coro:AB-and-AD-edgepaths} implies that $\gamma$ is either an $AD$-edge-path or an $AB$-edge-path.

Suppose  $r > 1$, the boundary components $\bd_1 F'$ have positive slope $1/r$, thus the slope is in correspondence with the slope given by the surface defined by the continued fraction expansion $[1;2m_1,...,2m_i]$ for $\beta/\alpha$, by Corollary \ref{linkingpositivo}. Applying Proposition \ref{genusAD} we obtain the result claimed in $(1)$ .

Similarly, if $r < 1$ the boundary slopes of $\bd_1 F'$ are negative and by Corollary \ref{linkingnegativo}, $F'$ is in correspondence with the path given by the continued fraction expansion $[0;2n_1,...,2n_j]$. The genus of $F'$ is given by Proposition \ref{genusAD} and hence we have proof $(2)$.

If $\vert r \vert  > 1 $, then $\mu= n$. If $\vert lk(K_1, K_2)\vert >1$; then $\gamma$ is a minimal $AB$-edge-path. Let $[s;2r_1,...,2r_k]$ be continued fraction expansion for $\beta/\alpha$ with $s=0$ or $1$ and such that $k\geq3$ and $r_t \geq 2$ for all $t$. The genus of $F'$ is computed using  Proposition \ref{genusAB}. We have proved $(3)$.

In the case that $\vert r \vert =1$ and $\vert lk(K_1, K_2)\vert =1$, the path $\gamma$ is an $A$-edge-path. Let $[0;2n_1,...,2n_j]$ and $[1;2m_1,...,2m_i]$ be the continued fraction for $\beta/\alpha$ with $j,i$ odd. Using Proposition \ref{genusA}, we can compute the genus of the two surfaces corresponding to the both continued fractions. We pick the minimum between them, and we get part $(4)$ of the Theorem.

\end{proof}

\end{document}